\begin{document}
\addtolength{\parskip}{8pt}
\parindent0pt

\newcommand\C{{\mathbb C}}

\newtheorem{thm}{Theorem}[section]
\newtheorem{prop}[thm]{Proposition}
\newtheorem{cor}[thm]{Corollary}
\newtheorem{lem}[thm]{Lemma}
\newtheorem{lemma}[thm]{Lemma}
\newtheorem{exams}[thm]{Examples}
\theoremstyle{definition}
\newtheorem{defn}[thm]{Definition}
\newtheorem{rem}[thm]{Remark}
\newcommand\RR{\mathbb{R}}
\newcommand{\la}{\lambda}
\def\RN {\mathbb{R}^n}
\newcommand{\norm}[1]{\left\Vert#1\right\Vert}
\newcommand{\abs}[1]{\left\vert#1\right\vert}
\newcommand{\set}[1]{\left\{#1\right\}}
\newcommand{\Real}{\mathbb{R}}
\newcommand{\supp}{\operatorname{supp}}
\newcommand{\card}{\operatorname{card}}
\renewcommand{\L}{\mathcal{L}}
\renewcommand{\P}{\mathcal{P}}
\newcommand{\T}{\mathcal{T}}
\newcommand{\A}{\mathbb{A}}
\newcommand{\K}{\mathcal{K}}
\renewcommand{\S}{\mathcal{S}}
\newcommand{\blue}[1]{\textcolor{blue}{#1}}
\newcommand{\red}[1]{\textcolor{red}{#1}}
\newcommand{\Id}{\operatorname{I}}
\newcommand\wrt{\,{\rm d}}
\def\SH{\sqrt {H}}

\newcommand{\rn}{\mathbb R^n}
\newcommand{\de}{\delta}
\newcommand{\tf}{\tfrac}
\newcommand{\ep}{\epsilon}
\newcommand{\vp}{\varphi}

\newcommand{\mar}[1]{{\marginpar{\sffamily{\scriptsize
        #1}}}}
\newcommand{\li}[1]{{\mar{LY:#1}}}
\newcommand{\el}[1]{{\mar{EM:#1}}}
\newcommand{\as}[1]{{\mar{AS:#1}}}
\newcommand\CC{\mathbb{C}}
\newcommand\NN{\mathbb{N}}
\newcommand\ZZ{\mathbb{Z}}
\renewcommand\Re{\operatorname{Re}}
\renewcommand\Im{\operatorname{Im}}
\newcommand{\mc}{\mathcal}
\newcommand\D{\mathcal{D}}
\newcommand{\al}{\alpha}
\newcommand{\nf}{\infty}
\newcommand{\comment}[1]{\vskip.3cm
	\fbox{%
		\color{red}
		\parbox{0.93\linewidth}{\footnotesize #1}}
	\vskip.3cm}

\newcommand{\disappear}[1]

\numberwithin{equation}{section}
\newcommand{\chg}[1]{{\color{red}{#1}}}
\newcommand{\note}[1]{{\color{green}{#1}}}
\newcommand{\later}[1]{{\color{blue}{#1}}}
\newcommand{\bchi}{ {\chi}}

\numberwithin{equation}{section}
\newcommand\relphantom[1]{\mathrel{\phantom{#1}}}
\newcommand\ve{\varepsilon}  \newcommand\tve{t_{\varepsilon}}
\newcommand\vf{\varphi}      \newcommand\yvf{y_{\varphi}}
\newcommand\bfE{\mathbf{E}}
\newcommand{\ale}{\text{a.e. }}

 \newcommand{\mm}{\mathbf m}
\newcommand{\Be}{\begin{equation}}
\newcommand{\Ee}{\end{equation}}

\title[Almost everywhere convergence of Bochner-Riesz means]
{ Almost everywhere convergence of Bochner-Riesz means  for the Hermite operators}

 \author{Peng Chen}
 \author{Xuan Thinh Duong}
 \author{Danqing He}
 \author{Sanghyuk Lee}
 \author{Lixin Yan}
 \address{Peng Chen, Department of Mathematics, Sun Yat-sen
 University, Guangzhou, 510275, P.R. China}
 \email{chenpeng3@mail.sysu.edu.cn}
 \address{Xuan Thinh Duong, Department of Mathematics, Macquarie University, NSW 2109, Australia}
\email{xuan.duong@mq.edu.au}
 \address {Danqing He, School of Mathematical Sciences, Fudan University, Shanghai,  200433, P.R. China}
 \email{hedanqing@fudan.edu.cn}
 \address{Sanghyuk Lee,  School of Mathematical Sciences, Seoul national University, Seoul 151-742, Republic  of Korea}
\email{shklee@snu.ac.kr}
 \address{Lixin Yan, Department of Mathematics, Sun Yat-sen   University,
 Guangzhou, 510275, P.R. China}
 \email{mcsylx@mail.sysu.edu.cn}

\date{\today}
\subjclass[2000]{42B15, 42B25,   47F05.}
\keywords{Almost everywhere convergence,  Bochner-Riesz means, the Hermite operators, trace lemma.}

\begin{abstract}
Let $H = -\Delta + |x|^2$ be the Hermite operator  in
${\mathbb R}^n$. In this paper we study  almost everywhere convergence of the Bochner-Riesz means
associated with $H$
 which is defined by
 $S_R^{\lambda}(H)f(x)  = \sum\limits_{k=0}^{\infty} \big(1-{2k+n\over R^2}\big)_+^{\lambda} P_k f(x).$
 Here $P_k f$ is the $k$-th Hermite  spectral projection operator.
For $2\le p<\infty$,   we prove that
    $$
    \lim\limits_{R\to \infty} S_R^{\lambda}(H) f=f
    \ \   \  \text{a.e.}
$$
 for   all $f\in L^p(\mathbb R^n)$ provided that $
 \lambda>  \lambda(p)/2$ and  $\lambda(p)=\max\big\{ n\big({1/2}-{1/p}\big)-{1/ 2}, \, 0\big\}.
 $
 Conversely, we also show the convergence  generally fails if $\lambda< \lambda(p)/2$ in the sense that
   there is an $f\in L^p(\mathbb R^n)$ for $2n/(n-1)\le p$ such that
 the convergence fails.  This is in surprising contrast with  a.e. convergence of the classical Bochner-Riesz means for the Laplacian.
 For $n\geq 2$ and $p\ge 2$ our result  tells  that  the critical  summability index for a.e. convergence for  $S_R^{\lambda}(H)$
 is  as small as only  the \emph{half}  of
 the critical index for a.e. convergence of the classical Bochner-Riesz means.
When $n = 1$,  we show a.e. convergence  holds
for $f\in L^p({\mathbb R})$
with $  p\geq 2$ whenever $\lambda>0$.
 Compared with 	the classical result due to Askey and Wainger who showed the optimal $L^p$ convergence for  $S_R^{\lambda}(H)$ on ${\mathbb R}$
  we only need smaller summability index for a.e.   convergence.
	   \end{abstract}
\maketitle


\section{Introduction}
 \setcounter{equation}{0}
 Convergence of Bochner-Riesz means of Fourier transform in the $L^p$ spaces is one of  the most fundamental problems in classical harmonic analysis.
For $\lambda\ge 0$ and $R>0$,  the  classical Bochner-Riesz means  for the Laplacian on $\RN$ are  defined by
\begin{eqnarray}
\label{e1.1}
{S^{\lambda}_{R}f}(x)
=\int_{\mathbb R^n} e^{2\pi ix\cdot \xi} \left(1-{|\xi|^2\over R^2}\right)_+^{\lambda} \widehat{f}(\xi) \,d\xi,  \quad \forall{\xi \in \RN}.
\end{eqnarray}
Here $t_+=\max\{0,t\}$ for $t\in \mathbb R$ and $\widehat{f}\,$ denotes the Fourier  transform  of $f$.
The $L^p$ convergence of $S^\lambda_R f\to f$ as $R\to \infty$ is equivalent to the $L^p$ boundedness of the operator
 $S^\lambda:=S^\lambda_1$, and the longstanding open problem known as \emph{the Bochner-Riesz conjecture} is that,
 for $1\leq p \leq \infty $ and $p\neq 2$, $S^\lambda $ is bounded on $L^p(\mathbb R^n)$ if and only if
\begin{eqnarray} \label{e1.2}
\lambda>\lambda(p):=\max\Big\{ n\Big|{1\over 2}-{1\over p}\Big|-{1\over 2}, \, 0\Big\}.
\end{eqnarray}
It was shown by Herz \cite{He} that  the condition \eqref{e1.2} on  $\lambda$  is
necessary for $L^p$ boundedness of $S^\lambda$.  Carleson and Sj\"olin \cite{CSj} proved the conjecture
 when $n=2$.  Afterward, substantial progress has been
made in higher dimensions, for example see  \cite{tvv, Lee,  BoGu, ghi, Ta3} and references therein.
However, the conjecture still remains open for $n\ge 3$.
 Concerning pointwise convergence,  Carbery, Rubio de Francia and Vega \cite{CRV}
 showed  a.e. convergence with the sharp summability exponent  for  all $f\in L^p(\RN)$,
$$
 \lim_{R\to \infty} S_R^{\lambda}  f=f \ \  {\rm  a.e.}
 $$
provided  $p\geq 2$
and
$
 \lambda>\lambda(p).
$
When $n=2$ the result was previously obtained  by Carbery  \cite{Ca} who proved  the sharp $L^p$ estimates for the maximal Bochner-Riesz means.
Also, see  \cite{C1}  for earlier  partial result based on the maximal Bochner-Riesz estimate in
higher dimensions. Regarding the most recent result for the maximal Bochner-Riesz estimate,
we refer the reader to \cite{Lee2}.

It is remarkable that  the result by  Carbery et al. \cite{CRV}  settled the a.e. convergence problem up to the sharp
 index $\lambda(p)$ for $2\le p\le \infty$. There are also results at the critical
  exponent, i.e., $\lambda=\lambda(p)$ (for example,  see  \cite{A, LS}). It should be mentioned that
   almost everywhere convergence of $S^{\lambda}_Rf$ with $f\in L^p$,  $1<p<2,$
   exhibits  different nature  and few results are known in this direction except when dimension $n=2$ (\cite{LW, Ta1, Ta2}).

\subsection*{Bochner-Riesz means
for
   the Hermite operator}
In this paper we  are concerned with almost everywhere convergence of  Bochner-Riesz means
for
   the Hermite operator $H$  on $ \RR^n$, which is defined by
   \begin{eqnarray}\label{eecc}
H=-\Delta + |x|^2 =-\sum_{i=1}^n {\partial^2\over \partial x_i^2} + |x|^2, \quad x=(x_1, \cdots, x_n).
\end{eqnarray}
The operator $H$ is non-negative and selfadjoint with respect to the Lebesgue measure
on $\RN$. For each non-negative integer $k$, the Hermite polynomials $H_k(t) $ on $\RR$ are
defined by $H_k(t)=(-1)^k e^{t^2} {d^k\over d t^k} \big(e^{-t^2}\big)$, and  the Hermite functions
$h_k(t):=(2^k k !  \sqrt{\pi})^{-1/2} H_k(t) e^{-t^2/2}$, $k=0, 1, 2, \ldots$ form an orthonormal basis
of $L^2(\mathbb R)$.
For
any multiindex $\mu\in {\mathbb N}^n_0$,
 the $n$-dimensional Hermite functions are given by tensor product of the one dimensional Hermite functions:
\begin{eqnarray}\label{ephi}
\Phi_{\mu}(x)=\prod_{i=1}^n h_{\mu_i}(x_i), \quad \mu=(\mu_1, \cdots, \mu_n).
\end{eqnarray}
Then the functions $\Phi_{\mu}$ are eigenfunctions for
the Hermite operator with eigenvalue $(2|\mu|+n)$ and $\{\Phi_{\mu}\}_{\mu\in \mathbb N_0^n}$ form a complete orthonormal
system in $L^2({\RN})$.
Thus, for  every $f\in L^2(\RN)$  we have  the Hermite expansion
\begin{eqnarray} \label{e1.3}
f(x)=\sum_{\mu}\langle f, \Phi_{\mu}\rangle \Phi_\mu(x)=\sum_{k=0}^{\infty}P_kf(x),
\end{eqnarray}
where $P_k$ denotes the Hermite projection operator given by
\begin{eqnarray} \label{e1.5}
P_kf(x)=\sum_{|\mu|=k}\langle f, \Phi_{\mu}\rangle\Phi_\mu(x).
\end{eqnarray}
For $R>0$ the  Bochner-Riesz means for $H$ of order $\lambda\geq 0$   is defined by
\begin{eqnarray}\label{e1.4}
S_R^{\lambda}(H)f(x)
 =
\sum_{k=0}^{\infty} \left(1-{2k+n\over R^2}\right)_+^{\lambda} P_k f(x).
\end{eqnarray}
The assumption $\lambda\geq 0$ is necessary  $S_R^{\lambda}(H)$  to be   defined for all  $R>0$.\footnote{Note that
$S_R^{\lambda}(H)f$ can not be defined with $R^2=2k+n$ if  $\lambda<0$.}

Concerning the  $L^p$ convergence of $S_R^{\lambda}(H)f$, uniform $L^p$ boundedness of  $S_R^{\lambda}(H)$ has been studied by
a number of  authors.
In one dimension, it is known  \cite{AW, Th2} that  if $\lambda>1/6$,  $S_R^{\lambda}(H)$ is uniformly
 bounded on $L^p$ for $1\le p\le \infty$ and,  for $1/6> \lambda\ge 0$ and $1\le p\le \infty$,   $S_R^{\lambda}(H)$ is uniformly bounded
on $L^p(\RR)$  if and only if $\lambda>(2/3) |1/p -1/2|  -1/6$.  In higher dimensions ($n\geq 2$) the $L^p$ boundedness of
$S_R^{\lambda}(H)$ is not so well understood yet.
When $\lambda>(n-1)/2$  Thangavelu \cite{Th22} showed  uniform boundedness of
 $S_R^{\lambda}(H)$  on  $L^p$ for $1\le p\le \infty.$ In particular,  $S_R^{\lambda}(H)$ converges to $f$ in $L^1(\RN)$
 if and only if $\lambda>(n-1)/2$.  For $0\le \lambda\le (n-1)/2$ and $1\le p\le \infty$, $p\neq 2$,  it still seems natural to conjecture that
$S_R^{\lambda}(H)$ is uniformly bounded on $L^p(\RN)$   if and only if $\lambda>\lambda(p)$ (see~\cite[p.259]{Th4}).
 Thangavelu also showed $\|S_R^{\lambda}(H)f\|_p\le C\|f\|_p$ if and only if  $\lambda>\lambda(p)$
 under the assumption that $f$ is radial, thus  the   condition
$\lambda>\lambda(p)$  is necessary for $L^p$ boundedness of $S^\lambda_R(H)$.
The necessity of the condition $\lambda>\lambda(p)$ for $L^p$ boundedness can also be shown by the transplantation result in  \cite{KST} which deduces
the $L^p$ boundedness of  $S_R^{\lambda}$ from that of $S_R^{\lambda}(H)$.   Karadzhov  \cite{Kar}
 verified the conjecture in the range  $1\leq p\leq 2n/(n+2).$ The boundedness for $ p\in [2n/(n-2), \infty]$
 follows from duality.  However, it   remains open to see if the conjecture is true in the range $2n/(n+2)<p\leq 2n/(n+1)$.

\subsection*{Almost everywhere convergence} Concerning a.e.  convergence of $S_R^{\lambda}(H)f$,
it is known \cite{Th2, Th22} (see also \cite[Chapter 3]{Th3}) that  $S^{\!\lambda}_{\!R}(H) f$ converges  to $f$ a.e.
 for every $f\in L^p(\RN), 1\leq p< \infty,$ whenever $\lambda >(3n-2)/6$.
Recently, Chen, Lee, Sikora and Yan  \cite{CLSY}  studied   $L^p$ boundedness of the maximal Bochner-Riesz means
for the Hermite operator $H$ on ${\mathbb R}^n$ for $n\geq 2$,  that is to say,
\[
S^{\lambda}_{\ast}(H) f(x):=\sup_{R>0} |S^{\lambda}_R(H) f(x)|,
\]
and  they showed  that the operator $S_{\ast}^{\lambda}(H)$ is bounded on $L^p(\RN)$ whenever
\begin{eqnarray}\label{e1.6}
 p \ge {2n\over n-2}\ \ \ {\rm and }\ \ \  \lambda> \lambda(p).
\end{eqnarray}
As a consequence, we have
\begin{equation}
\label{eq:hae}
\lim\limits_{R\to \infty} S_R^{\lambda}(H) f=f\quad \text{a.e.}
\end{equation} for $f\in L^p(\RN)$ and  $p$, $\lambda$ satisfying \eqref{e1.6}.
For more results regarding the Hermite expansion \eqref{e1.3}, the estimate for the Hermite spectral projection,  and
 the Bochner-Riesz means for the Hermite operator, we refer the reader to \cite{ Kar1,   Th22, Th1, Th4,  KT,  DOS,  COSY, JLR}
 and references therein.

The following is the main result of this paper which establishes a.e convergence of the operator $S^{\lambda}_{R} (H)$ up to
 the sharp summability index for $p\ge 2$.

\begin{thm}\label{th1.1}  Let $2\le p< \infty$ and $\lambda\ge 0$.  Then, for any   $f\in L^p(\mathbb R^n)$ we have  \eqref{eq:hae} whenever $
 \lambda>   \lambda(p)/2$.  In particular, for $n=1$,
	\eqref{eq:hae} holds 	for all $f\in L^p({\mathbb R})$
	whenever $\lambda>0$. Conversely,  if    \eqref{eq:hae} holds  for all $f\in L^p(\mathbb R^n)$ with $n\ge 2$ and $ 2n/(n-1)<p<\infty$,
	 we  have   $\lambda\geq  \lambda(p)/2$.
\end{thm}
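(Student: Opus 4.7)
I would handle the two directions of the theorem independently. For the positive direction, the standard density reduction shows that a.e.~convergence on $L^p(\RN)$ with $p\ge 2$ will follow as soon as the maximal operator $S^{\lambda}_{*}(H)$ enjoys a weak type $(p,p)$ bound, since $S_R^{\lambda}(H)f\to f$ everywhere for Schwartz $f$. I would then apply a Carbery-type square function decomposition: up to a single multiplier piece (e.g.~$S_1^{\lambda}(H)f$), the supremum $\sup_{R>1}|S_R^{\lambda}(H)f|$ is dominated pointwise by the square function of $R\,\partial_R S_R^{\lambda_0}(H)f$ integrated against $dR/R$, for a suitably chosen $\lambda_0 > \lambda$. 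A dyadic splitting of the Hermite spectrum ($2k+n\sim 2^{2j}$) then reduces matters to uniform weighted $L^2$ bounds on smoothly truncated spectral pieces.

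The core new ingredient is a trace-type inequality for the Hermite projections $P_k$ of the shape
$$\|P_k f\|_{L^2(\RN)}^2 \lesssim (2k+n)^{-\al(p)}\,\|f\|_{L^2(w\,dx)}^2,$$
for an admissible weight $w=w_p$. Summing these bounds with the Riesz weights $(1-(2k+n)/R^2)_+^{2\lambda}$ and dualising against $L^{p/2}$ yields the required $L^p$ maximal estimate at the threshold $\lambda>\lambda(p)/2$. The factor $\tfrac12$ in the critical index emerges naturally from this step and morally reflects the sharper concentration of Hermite eigenfunctions near the turning sphere $|x|\sim\sqrt{2k+n}$ compared with plane waves of frequency $\sqrt{2k+n}$. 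When $n=1$ the one-dimensional Hermite projections are rank one, so the square-function step collapses to an elementary computation yielding a.e.~convergence for every $\lambda>0$.

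For the converse ($n\ge 2$ and $2n/(n-1)<p<\nf$), I would argue by contradiction through Stein's maximal principle: were a.e.~convergence to hold for every $f\in L^p(\RN)$, then $S^{\lambda}_{*}(H)$ would be of weak type $(p,p)$. I would falsify this for $\lambda<\lambda(p)/2$ with a Knapp-type construction, a natural candidate being a normalised tensor-product Hermite function $\Phi_{\mu}$ with $\mu=(k,0,\dots,0)$ and $k$ large, which concentrates on an Airy neighbourhood of the sphere $|x|=\sqrt{2k+n}$ and whose $L^p$ norm is known explicitly. Testing $S_R^{\lambda}(H)\Phi_{\mu}$ at $R$ slightly above $\sqrt{2k+n}$ and invoking the classical one-dimensional Hermite asymptotics produces a lower bound on $\|S^{\lambda}_{*}(H)\Phi_{\mu}\|_p/\|\Phi_{\mu}\|_p$ that blows up as $k\to\nf$ whenever $\lambda<\lambda(p)/2$.

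The principal obstacle I anticipate lies in the trace inequality underpinning the positive direction: achieving the sharp exponent $(2k+n)^{-\al(p)}$ with only \emph{half} of the classical loss requires a weight $w_p$ carefully calibrated to the semi-classical geometry of $H$ near the turning surface, where Hermite eigenfunctions display Airy-type oscillations. Sharpening the available Koch--Tataru style spectral cluster estimates to a weighted form, with uniform dependence on $k$, is the technical heart of the matter, and controlling the contribution of the transition region between the classically allowed and forbidden zones is where I expect the argument to be most delicate.
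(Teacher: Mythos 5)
Your high-level intuition is sound --- the $1/2$-gain does come from a sharper weighted spectral cluster estimate for Hermite projections, and testing against concentrated Hermite eigenfunctions is indeed how the converse is established --- but there are two structural gaps that matter.

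\textbf{Positive direction.} You propose to reach a weak type $(p,p)$ bound for $S^\lambda_*(H)$ and then run the density argument. This is almost certainly unattainable in the stated range: uniform $L^p$ boundedness of $S^\lambda_R(H)$ requires $\lambda>\lambda(p)$ (Thangavelu; KST transplantation), and weak $(p,p)$ bounds for Bochner--Riesz--type maximal operators are not expected to survive below the Herz threshold either. The whole point of the surprise in the theorem is that a.e.\ convergence holds at \emph{half} the $L^p$-critical index, precisely because one does \emph{not} need an $L^p$ or weak $(p,p)$ bound. The paper instead proves a weighted $L^2$ bound, $S^\lambda_*(H)\colon L^2(\RN,(1+|x|)^{-\alpha})\to L^2(\RN,(1+|x|)^{-\alpha})$ for $\lambda>\max\{(\alpha-1)/4,0\}$ (Theorem~\ref{th1.2}), and then uses H\"older's inequality to observe that $L^p\subset L^2(\RN,(1+|x|)^{-\alpha})$ for $\alpha>n(1-2/p)$; this transfers the weighted $L^2$ bound to a.e.\ convergence on $L^p$ without any $L^p$ maximal estimate. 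Your ``dualising against $L^{p/2}$'' step hints at the right mechanism, but the reduction as you frame it (via weak $(p,p)$) would not close. Your proposed trace inequality is also slightly off: the decisive estimate (Lemma~\ref{le3.1}) is $\|\chi_{[k,k+1)}(H)\|_{L^2\to L^2((1+|x|)^{-\alpha})}\lesssim k^{-1/4}$ for fixed $\alpha>1$, i.e.\ a \emph{universal} $k^{-1/4}$ decay with the $p$- (equivalently $\alpha$-) dependence carried entirely by the weight, not by a $p$-dependent exponent on $k$. The proof also crucially splits high and low frequency contributions ($k\gtrsim\delta^{-1}$ vs.\ $k\lesssim\delta^{-1}$) and uses, for the low-frequency part, the weighted Sobolev-type bound $\|(1+|x|)^{2\alpha}f\|_2\lesssim\|(1+H)^\alpha f\|_2$ (Lemma~\ref{prop2.1}), an ingredient with no classical analogue that your outline does not mention.

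\textbf{Converse.} Stein's maximal principle requires the family $\{S_R^\lambda(H)\}$ to commute with a transitive group of measure-preserving transformations (translations, rotations, ...), which the Hermite operator does not. The applicable tool is the Nikishin--Maurey theorem: a.e.\ convergence on $L^p$ gives only a weighted weak $(p,2)$ bound
$\alpha\,\bigl(\int_{\{|S_*^\lambda(H)f|>\alpha\}}w\,dx\bigr)^{1/2}\lesssim\|f\|_p$
for \emph{some} a.e.-positive weight $w$ that you do not control. This forces extra care: the paper first restricts to radial functions (which lets one work on $([0,\infty),r^{n-1}dr)$ and invoke Nikishin there), then tests against the radial Hermite eigenfunctions $\mathfrak L_k^n(|x|)$ and shows, via the Laguerre asymptotics, a lower bound of the form $\sup_\beta\beta\bigl(\int_{\{|\mathfrak L_k^n|>\beta\}}w\bigr)^{1/2}\gtrsim D_k k^{-1/4}$ that holds for \emph{every} a.e.-positive $w$, by exhibiting a fixed-measure set where $|\mathfrak L_k^n|\gtrsim D_kk^{-1/4}$ and $w$ is bounded below. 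Your Knapp-type test with $\Phi_\mu$, $\mu=(k,0,\dots,0)$, is in the right spirit, but without this restriction-to-radial step and without tracking the unknown weight the argument does not close. As written, the ``weak type $(p,p)$ from Stein'' step is a genuine gap.

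To summarise: you've spotted the right theme (a trace-type weighted estimate giving the extra $1/4$ of decay, and eigenfunction testing for the converse), but replacing weak $(p,p)$ with the weighted $L^2(\,(1+|x|)^{-\alpha}\,)$ framework, and Stein with Nikishin--Maurey plus the radial restriction trick, are not cosmetic changes; they are what make the proof go through.
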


Except the endpoint cases
  Theorem \ref{th1.1} almost completely settles
the a.e. convergence problem of $S_R^\lambda(H)f$
as $R\to \infty$ with $f\in L^p(\mathbb R^n)$, $p
\ge 2$.
As is already mentioned, $S_R^\lambda(H)$ converges in $L^p$ only if $\lambda>\lambda(p)$. Surprisingly,
 we only need the half of the critical summability index  $\lambda(p)$  in order to guarantee a.e. convergence of  $S_R^\lambda(H)f$.
Unlike the classical Bochner-Riesz means the critical indices for  $L^p$ convergence and a.e. convergence for Hermite operators do not match.

Let us now recall from \cite[pp.320-321]{CSo} (also  \cite{LS}) how the sharpness of the result in \cite{CRV} can be justified
for the classical Bochner-Riesz means $S_R^\lambda$.
  In order to consider  a.e. convergence
of $S_R^\lambda f$ with  $f\in L^p$,  $S_R^\lambda f$ should be defined at least as a tempered distribution for $f\in L^p$. If so,
 by duality  $S^\lambda$ is defined from Schwartz class ${\mathscr S}$ to $L^{p’}$. This implies the convolution kernel $K^\lambda$
 of $S^\lambda$  is in $L^{p'}$, so it follows that $\lambda>\lambda(p)$
because  $K^\lambda\in L^{p'}$ if and only if  $\lambda>\lambda(p)$.
However, this kind of argument does not work for the Bochner-Riesz means for the Hermite operator since $S_R^\lambda(H) f$
 is well defined for any $f\in L^p$.
To show the necessity part of Theorem \ref{th1.1}  we make use of the Nikishin-Maurey theorem
by which a.e. convergence implies a weighted inequality for the maximal operator
$S^{\lambda}_{\ast} (H)$.  We show such a maximal estimate can not be true if $\lambda<\lambda(p)/2$.  See Proposition  \ref{nec-1} below.

The sufficiency part of  Theorem \ref{th1.1}  relies on the maximal estimate which is a typical device in
the study of almost everywhere convergence. In order to show
\eqref{eq:hae}  we  consider the corresponding maximal operator $S_*^\lambda(H) $ and prove the following weighted estimate, from which
we deduce a.e. convergence of  $S_R^{\lambda}(H) f$ via the standard argument.

\begin{thm}\label{th1.2}  Let
  $0\leq \alpha<n$. The operator  $S_*^\lambda(H)$ is bounded on $L^2(\RN, (1+|x|)^{-\alpha})$ if
\[\lambda>\max\Big\{\frac{\alpha-1}{4},0\Big\}. \]
Conversely, if
  $S_*^\lambda(H)$ is bounded on $L^2(\RN, (1+|x|)^{-\alpha})$,   then   $\la \ge\max\big\{ {(\alpha-1)}/{4},0\big\}$.
\end{thm}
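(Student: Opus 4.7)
The plan is to adapt the weighted $L^2$ square function approach of Carbery--Rubio de Francia--Vega to the Hermite setting. The ``half'' in the sufficient exponent $(\alpha-1)/4$ (versus the classical $(\alpha-1)/2$) strongly suggests that the right ingredient is a trace/restriction-type inequality for $H$ which captures the phase-space confinement built into the harmonic oscillator: informally, the energy shell $\{|x|^2+|\xi|^2=R^2\}$ in phase space is $(2n-1)$-dimensional rather than the $(n-1)$-dimensional frequency sphere $\{|\xi|=R\}$, and this doubles the gain produced by a smooth Bochner--Riesz multiplier.

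\emph{Sufficiency.} I would first dyadically decompose the symbol as $(1-t)_+^\lambda=\sum_{j\ge 0}2^{-j\lambda}m_j(t)$ with $m_j\in C_c^\infty$ supported in $\{|1-t|\sim 2^{-j}\}$, and set $M_R^j:=m_j(H/R^2)$. Stein's lemma gives, for every measurable $R(\cdot)$,
\[
\bigl(\sup_{R>0}|M_R^j f|\bigr)^2\;\lesssim\;|M_1^j f|^2+\Bigl(\int_0^\infty|M_R^j f|^2\,\tfrac{dR}{R}\Bigr)^{\!1/2}\Bigl(\int_0^\infty|R\partial_R M_R^j f|^2\,\tfrac{dR}{R}\Bigr)^{\!1/2}.
\]
The crux is then a Hermite trace lemma of the form
\[
\int_0^\infty\|M_R^j f\|_{L^2(w_\alpha)}^2\,\frac{dR}{R}\;\le\;C\,2^{-j(\alpha-1)/2}\,\|f\|_{L^2(w_\alpha)}^2,\qquad w_\alpha(x)=(1+|x|)^{-\alpha},
\]
together with the matching bound for $R\partial_R M_R^j$ (the extra derivative costs a factor $2^j$ but is compensated by differentiating $m_j$). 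Substituting and taking the $L^2(w_\alpha)$ norm produces $\|\sup_R|M_R^j f|\|_{L^2(w_\alpha)}\lesssim 2^{-j(\alpha-1)/4}\|f\|_{L^2(w_\alpha)}$, and summation in $j$ against $2^{-j\lambda}$ converges exactly when $\lambda>\max\{(\alpha-1)/4,0\}$.

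\emph{Necessity.} Here I would construct a Hermite Knapp-type packet $f_N$: a superposition of eigenfunctions $\Phi_\mu$ with $|\mu|$ in a window of width $N^{1-\eta}$ around a large $N$, concentrated along one coordinate direction so that $f_N$ is supported (in phase space) in a tube of longitudinal scale $\sqrt{N}$ and transverse scale $N^{\eta/2}$. Using the Plancherel--Rotach / Thangavelu pointwise asymptotics for $\Phi_\mu$, I would estimate $\|f_N\|_{L^2(w_\alpha)}$ and produce a lower bound for $|S_{R_N}^\lambda(H)f_N|$ at the critical choice $R_N^2\approx 2N+n$. Optimising $\eta$ and sending $N\to\infty$ forces $\lambda\ge(\alpha-1)/4$, while $\lambda\ge 0$ is inherent to the very definition of $S_R^\lambda(H)$.

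\emph{Main obstacle.} The principal difficulty is establishing the Hermite trace lemma with the sharp exponent $(\alpha-1)/2$; the classical Agmon--H\"ormander trace inequality only yields $(\alpha-1)$ after the same square-function reduction. Getting the doubling requires off-diagonal kernel estimates for the smooth spectral localiser $m_j(H/R^2)$ which exploit that its Schwartz kernel decays rapidly outside the classically allowed region $|x|\lesssim R$; this effectively restricts the weight $w_\alpha$ to $|x|\sim R$ and produces the extra gain of $(\alpha-1)/4$ over the Euclidean Bochner--Riesz picture. On the necessity side, the delicate point is constructing a test packet sharp enough to detect this doubling, for which the eigenfunction asymptotics near the turning point $|x|\sim\sqrt{2|\mu|+n}$ are essential.
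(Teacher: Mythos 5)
Your high-level architecture — dyadic decomposition of the multiplier, a square-function reduction, and a Hermite trace lemma exploiting the phase-space confinement — is the right one, and matches the paper's strategy in spirit. But the numbers do not work out, and the inconsistency is at the heart of the argument, not a typo.

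\textbf{The claimed trace lemma exponent is wrong and your own arithmetic contradicts your conclusion.} You claim
\[
\int_0^\infty\|M_R^j f\|_{L^2(w_\alpha)}^2\,\frac{dR}{R}\;\lesssim\;2^{-j(\alpha-1)/2}\,\|f\|_{L^2(w_\alpha)}^2.
\]
Grant this and the companion bound with an extra $2^{2j}$ for $R\partial_R M_R^j$. Then the Cauchy--Schwarz step in your Stein-type lemma gives the geometric mean $2^{j}\cdot 2^{-j(\alpha-1)/2}=2^{j(3-\alpha)/2}$, so
\[
\bigl\|\sup_R|M_R^jf|\bigr\|_{L^2(w_\alpha)}\lesssim 2^{j(3-\alpha)/4}\|f\|_{L^2(w_\alpha)},
\]
not $2^{-j(\alpha-1)/4}\|f\|$ as you wrote. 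With this, summing against $2^{-j\lambda}$ yields the threshold $\lambda>(3-\alpha)/4$, which decreases in $\alpha$ and is even negative for $\alpha>3$ — clearly wrong. Your written bound $2^{-j(\alpha-1)/4}$ has a sign error: for $\alpha>1$ the maximal operator must get \emph{worse} as $j\to\infty$, and indeed $\sum_j 2^{-j\lambda}2^{-j(\alpha-1)/4}$ would converge for every $\lambda\ge 0$, so no constraint at all would emerge. The correct square-function exponent (Proposition~\ref{prop4.1} in the paper) is $\delta^{(3-\alpha)/2}$ rather than $\delta^{(\alpha-1)/2}$: with it, your Stein-lemma chain gives $2^{j(\alpha-1)/4}$ and summability exactly for $\lambda>(\alpha-1)/4$. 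So the key ingredient is misstated, and it misstates it by precisely the amount needed to make the conclusion appear to follow.

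\textbf{Route and missing pieces.} The paper does not use the Sobolev-in-$R$ lemma at all; it instead uses the fractional averaging identity (\eqref{e4.2}, from Stein--Weiss) $S_*^\lambda(H)f\lesssim\sup_R(\frac1R\int_0^R|S_t^\rho(H)f|^2dt)^{1/2}$ with $\rho=\lambda-\frac12-\eta$, which feeds directly into the square-function estimate with $\delta=2^{-k}$; the $R\partial_R$ term never appears. Either route could be made to work, but the crux — proving the sharp square-function bound $\delta^{(3-\alpha)/2}$ — is not addressed in your sketch beyond naming it an obstacle. In the paper it requires two genuinely different ingredients which your sketch does not mention: (i) the trace-type inequality $\|\chi_{[k,k+1)}(H)\|_{L^2\to L^2((1+|x|)^{-\alpha})}\lesssim k^{-1/4}$, proved from the 1D pointwise bound $|h_k(t)|\lesssim k^{-1/4}$ for $|t|\lesssim\sqrt k$ (not from off-diagonal kernel decay); and (ii) the spatial weight bound $\|(1+|x|)^{2\beta}f\|_2\lesssim\|(1+H)^\beta f\|_2$, which handles the \emph{low-frequency} regime $t\lesssim\delta^{-1/2}$ where finite speed of propagation is useless. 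The high/low split at scale $\delta^{-1/2}$ and the balancing of the two mechanisms is where the factor $\tfrac12$ in the exponent actually comes from.

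\textbf{Necessity.} Your Knapp-packet construction is plausible but much more elaborate than what is needed. The paper takes the single tensor-product eigenfunction $g_k(x)=h_k(x_1)h_0(x_2)\cdots h_0(x_n)$ and its weighted companion $G_k=g_k(1+|x|)^{-\alpha}$, together with the two-sided weighted $L^2$-bounds on $h_k$ and the dual trace estimate $k^{\alpha/4}\|\chi_{[k,k+1)}(H)f\|_2\lesssim\|\chi_{[k,k+1)}(H)f\|_{L^2((1+|x|)^\alpha)}$; the lower bound then drops out after applying the fractional averaging identity once more. No Knapp tube, no window of width $N^{1-\eta}$, no optimization in $\eta$. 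Your proposal may work, but its details are not supplied, and it is overkill.

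In short: the skeleton is right, the philosophy is right, but the quantitative claim at the heart of the proof is wrong by a sign-reversing swap of $(\alpha-1)/2$ with $(3-\alpha)/2$, your Stein-lemma arithmetic is inconsistent with your own stated inequality, and the two-ingredient proof of the square-function estimate (trace lemma for high frequency, $(1+H)^\beta$-weight bound for low frequency) is not present.
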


Once we have   Theorem~\ref{th1.2} it is easy to deduce  the sufficiency part of     Theorem~\ref{th1.1}.
  Indeed,  via a standard approximation argument
 (see, for example, \cite{SW} and \cite[Theorem 2]{Th1})  Theorem~\ref{th1.2} establishes   \ale  convergence of $S_R^\lambda(H)f$
   for all $f\in L^2({\mathbb R^n}, \, (1+|x|)^{-\alpha})$ provided that
 $
\lambda>\max\big\{ {(\alpha-1)}/{4},0\big\}.
$
Now,   for given $p\geq 2$ and $\lambda>\lambda(p)/2$ we can choose an $\alpha$ such that
$ \alpha> n(1-2/p)$ and $
\lambda>\max\big\{ {(\alpha-1)}/{4},0\big\}.
$
Our choice of $\alpha$  ensures that  $f\in L^2({\mathbb R^n}, \, (1+|x|)^{-\alpha})$  if $f\in L^p$ as it follows by H\"older's inequality.
Therefore, this yields a.e. convergence of $S_R^\lambda(H)f$   for  $f\in L^p(\RN)$  if $\lambda> \lambda(p)/2$.

The use of weighted $L^2$ estimate in the study of pointwise convergence for Bochner-Riesz means goes back to Carbery et al. \cite{CRV}.
It turned out that the  same strategy is also efficient for similar problems  in different settings.
For example, see  \cite{A, LS} for a.e. convergence of the classical
 Bochner-Riesz means at the critical index $\lambda(p)$ with $p> 2n/(n-1)$  and  see  \cite{GM, HM}  for  a.e. convergence for the Bochner-Riesz
 means associated with the sub-Laplacian on the Heisenberg  group.

\subsection*{Square function estimate on weighted $L^2$-space} The proof of the sufficiency part of Theorem~\ref{th1.2} relies on a weighted
$L^2$-estimate for the square function $\mathfrak S_\delta$ which is defined by
\begin{equation}\label{e4.6}
 \mathfrak S_{\delta}f(x)=\left( \int_0^{\infty} \Big|\phi \Big(\delta^{-1}\Big(1-{ {H}\over t^2} \Big) \Big)f(x)\Big|^2
   {dt\over t}\right)^{1/2},\quad 0<\de<1/2,
  \end{equation}
  where $\phi$ is a fixed $C^{\infty}$ function supported in $[2^{-3}, 2^{-1}]$ with $ |\phi|\leq 1$.\footnote{Here,  for any bounded function $\mathfrak M$
  the operator $\mathfrak M(H)$ is  defined by $\mathfrak M(H)= \sum\limits_{k=0}^{\infty}\mathfrak M(2k+n)P_k$.} The following is our main estimate.

\begin{prop}\label{prop4.1}
Let  $0<\delta\leq 1/2$, $0<\epsilon \leq 1/2$,  and let $0\leq \alpha <n$. Then, there exists a constant $C>0$, independent of $\de$ and $f$, such that
 \begin{equation}\label{e4.77}
  \int_{\RR^n} |\mathfrak S_{\delta}f(x)|^2(1+|x|)^{-\alpha}dx\leq C\delta A_{\alpha, n}^{\epsilon}(\delta)
  \int_{\RR^n} |f(x)|^2(1+|x|)^{-\alpha}dx,
  \end{equation}
  where
   \begin{eqnarray}\label{bbb}
  A_{\alpha, n}^{\epsilon}(\delta): =
 \left\{   \begin{array}{llll}   \,\,\delta^{-\epsilon}, \ & 0\leq\alpha\le 1,   & \text{ if } n=1,     \\[6pt]
 \delta^{{1\over 2}-{\alpha\over 2}}, \ & 1<\alpha<n, & \text{ if } n\ge 2.    \end{array}     \right.
   \end{eqnarray}
  \end{prop}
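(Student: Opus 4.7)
The plan is to integrate in the scale variable $t$ first, expand in the Hermite basis, and reduce to a weighted bound for a single narrow Hermite spectral projection. Set $w(x):=(1+|x|)^{-\alpha}$ and $m_t(k):=\phi(\delta^{-1}(1-(2k+n)/t^2))$, which is supported on $k$ with $2k+n\in t^2[1-\delta/2,\,1-\delta/8]$, so that $T_t:=\phi(\delta^{-1}(1-H/t^2))=\sum_k m_t(k)P_k$. Introducing the bilinear kernel
$$Q(k,k'):=\int_0^\infty m_t(k)\,m_t(k')\,\frac{dt}{t},$$
a direct computation gives $|Q(k,k')|\lesssim\delta$ and $Q(k,k')=0$ unless $|k-k'|\lesssim\delta(2k+n)$. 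Fubini then produces the quasi-diagonal identity
$$\int|\mathfrak S_\delta f|^2 w\,dx=\sum_{k,k'}Q(k,k')\int P_k f\cdot\overline{P_{k'}f}\,w\,dx.$$

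Next I would exploit this quasi-diagonality by decomposing the Hermite spectrum dyadically, $2^{2j}\le 2k+n<2^{2j+1}$, and subdividing each dyadic block into $O(1/\delta)$ consecutive subblocks $I_{j,\ell}$ of length $\sim\delta\,2^{2j}$. Because $Q$ is positive semidefinite (as $Q(k,k')=\int m_t(k)m_t(k')\,dt/t$) and is essentially supported on the union of the squares $I_{j,\ell}\times I_{j,\ell}$, almost-orthogonality together with a weighted Littlewood-Paley estimate for the $A_2$ weight $w$ reduces the proposition to the uniform \emph{weighted block estimate}
$$\Big\|\sum_{k\in I_{j,\ell}}P_k f\Big\|_{L^2(w)}^2\le C\,\delta\,A_{\alpha,n}^\epsilon(\delta)\,\|f\|_{L^2(w)}^2.$$
This is the Hermite counterpart of the thin spherical-shell trace estimate that drives the classical result in \cite{CRV}.

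To prove the block estimate, I would analyze the kernel of $E_{j,\ell}:=\sum_{k\in I_{j,\ell}}P_k$ by writing it as a smooth spectral multiplier of $H$ and representing it via Fourier inversion combined with Mehler's formula for the Hermite semigroup (equivalently, via the Hermite-Schr\"odinger propagator $e^{isH}$). For $n\ge 2$ and $1<\alpha<n$, the factor $\delta^{(1-\alpha)/2}$ should come from concentration of the kernel near the classical turning set $|x|\sim\sqrt{2k+n}$, where $w\sim (2k+n)^{-\alpha/2}$; to extract it I would decompose $f$ spatially into dyadic annuli around the origin and apply pointwise kernel bounds coming from stationary phase on each piece. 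For $n=1$ and $\alpha\in[0,1]$, I would instead invoke the Plancherel-Rotach / Askey-Wainger asymptotics for the one-dimensional Hermite functions, and the arbitrarily small loss $\delta^{-\epsilon}$ is absorbed through a dyadic summation of logarithmic factors close to the turning point.

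The main obstacle will be pinning down the sharp $\delta$-exponent in the block estimate uniformly in the dyadic index $j$. Unlike the translation-invariant Bochner-Riesz setting, where the Fourier transform provides a clean restriction to the sphere, the Hermite kernel oscillates and develops caustics at the turning points, so both its phase and amplitude require careful stationary-phase analysis of Mehler's kernel. Combining this with a Schur-type test weighted by the annular spatial decomposition should then yield the desired gain, while the summation across blocks is handled by the quasi-diagonality of $Q$ already established.
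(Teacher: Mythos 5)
Your reduction to a ``weighted block estimate'' is where the argument breaks. You claim
\[
\Big\|\sum_{k\in I_{j,\ell}}P_k f\Big\|_{L^2(w)}^2\le C\,\delta\,A_{\alpha,n}^\epsilon(\delta)\,\|f\|_{L^2(w)}^2,
\qquad w=(1+|x|)^{-\alpha}.
\]
But $E_{j,\ell}:=\sum_{k\in I_{j,\ell}}P_k$ is an orthogonal projection, so taking $f$ in its range forces the operator norm of $E_{j,\ell}$ on $L^2(w)$ to be $\ge 1$. Since $\delta\,A_{\alpha,n}^\epsilon(\delta)=\delta^{(3-\alpha)/2}$ for $n\ge 2$ (and $\delta^{1-\epsilon}$ for $n=1$), this tends to $0$ as $\delta\to0$ whenever $\alpha<3$, so the displayed block estimate is simply false. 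The computation of $Q(k,k')$ and the quasi-diagonal support are correct, but they do not justify this reduction; the bilinear identity you write is just another way of writing $\int_0^\infty\|T_tf\|_{L^2(w)}^2\,dt/t$ and does not by itself produce a gain.

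The underlying difficulty is that the Hermite trace lemma that drives the argument (the estimate $\|\chi_{[k,k+1)}(H)\|_{L^2\to L^2(w)}\lesssim k^{-1/4}$ for $\alpha>1$) naturally controls the map from \emph{unweighted} $L^2$ into $L^2(w)$, not $L^2(w)\to L^2(w)$. To pass to an $L^2(w)\to L^2(w)$ estimate one needs an extra mechanism, and the paper supplies two, matched to two frequency regimes separated at $t\sim\delta^{-1/2}$: for high frequencies it localizes the multiplier kernel via the finite speed of propagation of $\cos(t\sqrt H)$ and decomposes $\mathbb R^n$ into cubes of side $\sim 2^{j-k}$, on which $w$ is essentially constant, so the $L^2\to L^2(w)$ trace bound upgrades for free; for low frequencies it combines the trace bound with the operator inequality $\|(1+|x|)^{2\beta}g\|_2\lesssim\|(1+H)^\beta g\|_2$, which trades the weight on the input for negative powers of $H$. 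Your proposal has neither ingredient: it never introduces the high/low split (which is what actually produces the sharp exponent, via optimizing the two bounds -- see Remark~\ref{re:aboutdelta}), and the appeal to Mehler-kernel stationary phase and a ``Schur-type test'' on dyadic annuli does not, as written, address how the weight is to be moved from one side of the inequality to the other. Also missing is the interpolation step (Lemma~\ref{le33.1}) needed to reach $0<\alpha\le 1$, which is the full range in dimension one.

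Your observation that $Q$ is positive semidefinite with $|Q(k,k')|\lesssim\delta$ and essentially supported where $|k-k'|\lesssim\delta(2k+n)$ is correct and could be a useful organizing device, but as it stands the proposal does not close; to salvage it you would need to replace the false block estimate by the correct asymmetric bound $\|E_{j,\ell}f\|_{L^2(w)}^2\lesssim 2^{-j}\,|I_{j,\ell}|\,\|f\|_{L^2}^2$ and then supply a separate mechanism (spatial localization via the wave propagator, or the weighted Sobolev inequality for $H$) to return to $L^2(w)$ on the right-hand side.
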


A similar  estimate with the homogeneous weight $|x|^{-\alpha}$ was obtained by Carbery et al.  \cite{CRV}  for the
square function associated to the Laplacian $\Delta$:
\[S_{\delta}f(x):= \Big( \int_0^{\infty} \Big|\phi \Big(\delta^{-1}\Big(1+{t^{-2}{\Delta}} \Big) \Big)f(x)\Big|^2
   \frac{dt}t\Big)^{1/2}.\]
Though we make use of the weighted $L^2$ estimate as in \cite{CRV} there are notable differences which are due to
special properties of the Hermite operator and   they eventually
lead to improvement of the summability indices.  Let $\mathcal P_k$ be the Littlewood-Paley projection operator which is given by
$\widehat{\mathcal P_k f}(\xi)=\phi(2^{-k}|\xi|) \widehat f(\xi)$ for $\phi\in C_c^\infty(2^{-1}, 2)$. Thanks to the
scaling property of the Laplacian, the estimate for
$S_{\delta}(\mathcal P_kf) $ can  be reduced to the equivalent estimate for $S_{\delta}(\mathcal P_0f) $. This tells that
contributions from different dyadic frequency pieces are basically identical.  However, this is not the case for $\mathfrak S_{\delta}f$.
As for the Hermite case estimate \eqref{e4.77},  the high and low frequency parts exhibit considerably different natures.
Unlike the classical Bochner-Riesz operator,
we need to handle them separately.

\subsection*{Basic estimates}
 As is to be seen in Section~\ref{sec3} below, the proof of  Proposition~\ref{prop4.1} mainly  depends on the following two lemmas.

 \begin{lem}\label{prop2.1} Let $\alpha\geq 0$. Then,  the estimate
\begin{eqnarray}\label{e1.8}
\|(1+|x|)^{2\alpha}f\|_2\leq C\|(1+H)^{\alpha}f\|_2
\end{eqnarray}  holds for any $f\in {\mathscr S}(\RN)$. Here,
 ${\mathscr S}(\RN)$ stands for  the class of Schwartz functions in $\RN.$
 \end{lem}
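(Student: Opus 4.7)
The starting point is the quadratic-form inequality $1+|x|^2 \leq 1+H$ on $\mathscr{S}(\mathbb{R}^n)$, which is immediate from
\[
\langle (1+H)f, f\rangle = \|f\|_2^2 + \|\nabla f\|_2^2 + \||x|f\|_2^2 \geq \langle(1+|x|^2)f,f\rangle.
\]
Since both operators are positive self-adjoint with bounded inverses ($H\geq n$ because the Hermite eigenvalues are $2|\mu|+n\geq n$), the Loewner--Heinz operator monotonicity theorem applied to the resolvents gives $(1+|x|^2)^{\beta} \leq (1+H)^{\beta}$ for $0\leq\beta\leq 1$. Pairing with $f$, setting $\beta=2\alpha$, and invoking the elementary pointwise bound $(1+|x|)^{2\alpha}\leq 2^{\alpha}(1+|x|^2)^{\alpha}$, one concludes the lemma in the range $0\leq\alpha\leq 1/2$.

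The next building block is the case $\alpha=1$. Expanding $\|Hf\|_2^2 = \|(-\Delta+|x|^2)f\|_2^2$ and integrating by parts with the identity $2\,\mathrm{Re}\int x\cdot\nabla f\,\bar f\,dx = -n\|f\|_2^2$ yields
\[
\|Hf\|_2^2 = \|\Delta f\|_2^2 + \||x|^2 f\|_2^2 + 2\||x|\nabla f\|_2^2 - 2n\|f\|_2^2,
\]
so $\||x|^2 f\|_2^2 \leq \|Hf\|_2^2 + 2n\|f\|_2^2$. Together with $\|f\|_2\leq\|(1+H)f\|_2$ and $(1+|x|)^2\leq 2(1+|x|^2)$, this delivers the $\alpha=1$ case.

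To reach arbitrary $\alpha\geq 0$, I apply Stein's complex interpolation to the analytic family $T_z = (1+|x|^2)^z(1+H)^{-z}$. On $\Re z=0$ both factors are unitary, so $\|T_{iy}\|_{2\to 2}=1$; on $\Re z=1$ the factorisation $T_{1+iy} = (1+|x|^2)^{iy}\,[(1+|x|^2)(1+H)^{-1}]\,(1+H)^{-iy}$ yields $\|T_{1+iy}\|_{2\to 2}\leq C$ by the $\alpha=1$ case. This covers $0\leq\alpha\leq 1$, and the range is pushed to all $\alpha\geq 0$ by induction on $\lceil\alpha\rceil$: at the inductive step the commutator identity $[H, 1+|x|^2] = -2n-4x\cdot\nabla$ together with the collateral bound $\||x|\nabla f\|_2\leq\|Hf\|_2$ (extractable from the same integration-by-parts identity above) handles the non-commutativity.

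The main obstacle is the bookkeeping for the higher-order commutators $[H,(1+|x|^2)^N]$, which produce mixed weight-derivative cross-terms that have to be absorbed through the inductive hypothesis at lower orders. A cleaner alternative, avoiding induction altogether, is the Shubin pseudodifferential calculus, in which $(1+H)^\alpha$ has Weyl symbol comparable to $(1+|x|^2+|\xi|^2)^\alpha$ and the lemma reduces directly to $L^2$-boundedness of zeroth-order Shubin-class operators.
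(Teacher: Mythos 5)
Your argument is complete and correct for $0\le\alpha\le1$, but has a genuine gap for $\alpha>1$, which is where the real work of the lemma lies. (A small remark: the Stein interpolation in your third paragraph is redundant---applying L\"owner--Heinz directly to the form inequality $(1+|x|^2)^2\le C(1+H)^2$, which your $\alpha=1$ computation yields, already covers all of $0\le\alpha\le1$.) For $\alpha>1$ you set up an induction on $\lceil\alpha\rceil$ but then concede that ``the bookkeeping for the higher-order commutators $[H,(1+|x|^2)^N]$'' is ``the main obstacle'' and do not carry it out; that bookkeeping is not a side issue that can be deferred but is exactly where the proof must be made, since the cross terms of the form $|x|^{2m}\,x\cdot\nabla$ produced at each step are neither pure weights nor pure powers of $H$ and must be reabsorbed with quantitative control. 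The Shubin-calculus escape hatch is legitimate in principle, but the input it requires---that $(1+H)^\alpha$ lies in the Shubin class $G^{2\alpha}$ with elliptic Weyl symbol comparable to $(1+|x|^2+|\xi|^2)^\alpha$---is itself a non-elementary theorem that would need to be cited or proved. As written, neither route closes the gap.

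For comparison, the paper sidesteps the $n$-dimensional commutator morass entirely. It first proves the integer cases in \emph{one} dimension by a two-part induction that simultaneously tracks $\|x^{2k}\phi\|_2\le C_k\|H^k\phi\|_2$ and $\|Hx^{2(k-1)}\phi\|_2\le D_k\|H^k\phi\|_2$; the strengthened hypothesis is precisely what absorbs the cross term $x\frac{d}{dx}\big(x^{2(k-2)}\phi\big)$ arising in the explicit one-dimensional commutator identity (equation (2.3) of the paper). It then passes to $\mathbb{R}^n$ not by commuting $(1+|x|^2)^N$ past $H$ but by tensorization: writing $H=\sum_iH_i$ with strongly commuting one-dimensional Hermite operators $H_i$ and using $1+\sum_iH_i^{2k}\le\big(1+\sum_iH_i\big)^{2k}=(1+H)^{2k}$, so the $n$-dimensional integer case follows coordinate-by-coordinate from the 1D one. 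L\"owner--Heinz is then invoked once, at the end, to pass from integers to all real $\alpha\ge0$. If you wish to complete your proposal you should either write out the $n$-dimensional induction with a similarly strengthened hypothesis controlling the mixed terms, or adopt this 1D-plus-tensorization reduction.
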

Clearly, this can not be true if $H$ is replaced by $-\Delta$. It should be noted that
the estimate \eqref{e1.8} becomes more efficient when we deal with the low frequency part of the function.
 The second is  a type of trace lemma (Lemma~\ref{le3.1}) for the Hermite operator.
   In fact, we obtain

  \begin{lemma}\label{le3.1}
 For  $    \alpha>1$, there exists a constant $C>0$ such that  the estimate
  \begin{eqnarray}\label{e1.9}
\|\chi_{[k,k+1)}(H)\|_{L^2(\mathbb R^n)\to L^2(\RN, \,  (1+|x|)^{-\alpha})} \leq Ck^{-\frac{1}{4}}
\end{eqnarray}
   holds for every $k\in \NN$.
\end{lemma}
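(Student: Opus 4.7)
The plan is to first reduce the claim to a bound on a single Hermite projection, and then prove that bound using $TT^*$ together with a weighted resolvent estimate. Since the eigenvalues of $H$ are $\{2j+n:j\ge 0\}$ and are spaced by $2$, the interval $[k,k+1)$ contains at most one eigenvalue, so $\chi_{[k,k+1)}(H)$ is either zero or equals the Hermite projection $P_j$ for the unique $j$ with $\lambda_j := 2j+n \in [k,k+1)$. The lemma is therefore equivalent to
\[
\int_{\mathbb R^n}|P_j f(x)|^2 (1+|x|)^{-\alpha}\,dx \le C\,\lambda_j^{-1/2}\|f\|_2^2 \qquad (\alpha > 1).
\]

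For $n=1$ this is elementary. Since $P_j$ is rank one, $P_j f=\langle f,h_j\rangle h_j$, so Cauchy--Schwarz gives $\|P_j f\|_{L^2(w)}^2 \le \|h_j\|_{L^2(w)}^2\|f\|_2^2$. Splitting $\mathbb R$ into the oscillatory interior $|x| \leq (1-\lambda_j^{-1/3})\sqrt{\lambda_j}$, the turning-point Airy region $||x|-\sqrt{\lambda_j}|\lesssim \lambda_j^{-1/6}$ and the classically forbidden tail, using the corresponding pointwise bounds $|h_j(x)|^2 \lesssim \lambda_j^{-1/2}(1-x^2/\lambda_j)^{-1/2}$, $|h_j(x)|^2 \lesssim \lambda_j^{-1/6}$ and exponential decay, together with the integrability of $(1+|x|)^{-\alpha}$ for $\alpha>1$, yields $\|h_j\|_{L^2(w)}^2 \leq C\lambda_j^{-1/2}$ after a short calculation in each region.

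For $n\ge 2$ the projector $P_j$ has rank $\sim \lambda_j^{n-1}$, and since $P_j(0,0)\sim \lambda_j^{(n-2)/2}$ (computable directly from Mehler's formula), the naive Cauchy--Schwarz bound $|P_j f(x)|^2\le P_j(x,x)\|f\|_2^2$ is too lossy. My approach is to apply $TT^*$, recasting the estimate as $\|M_{w^{1/2}}P_j M_{w^{1/2}}\|_{L^2\to L^2}\le C\lambda_j^{-1/2}$, and to realize $P_j$ as a contour integral $P_j=\frac{1}{2\pi i}\oint_{|z-\lambda_j|=1}(z-H)^{-1}\,dz$, possible since the eigenvalues are discrete with unit gap. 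The problem then reduces to the Hermite limiting-absorption estimate
\[
\|M_{w^{1/2}}(H-z)^{-1}M_{w^{1/2}}\|_{L^2\to L^2} \le C|z|^{-1/2}
\]
uniformly in $\lambda_j$ on the contour; integrating over a loop of length $O(1)$ then gives the desired bound.

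The main obstacle is establishing this weighted resolvent estimate when $\alpha$ is only slightly larger than $1$. One plausible route is to combine Mehler's formula for the propagator $e^{itH}$ (an analytic continuation of the heat kernel) with a Fourier representation of $(H-z)^{-1}$ in the time variable, and extract the $|z|^{-1/2}$ gain by stationary-phase analysis, using $\alpha>1$ to absorb the singularities of the Mehler kernel at $t=0,\pi/2,\pi$. An alternative is to invoke Lemma~\ref{prop2.1} to localize to $|x|\lesssim \sqrt{\lambda_j}$ (fast polynomial decay outside) and then exploit the tensor decomposition $P_j = \sum_{k=0}^j P^{(1)}_k \otimes P^{(n-1)}_{j-k}$ together with the mutual orthogonality of distinct $(n-1)$-dimensional eigenspaces to reduce inductively to lower dimensions. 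The difficulty with the inductive route is that when $\alpha \in (1,2)$ the weight $(1+|x|)^{-\alpha}$ cannot be split as a product of two summable one-dimensional weights, so the reduction must be arranged to exploit decay simultaneously in the $x_1$ and $x'$ directions rather than sequentially.
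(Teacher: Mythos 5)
Your reduction for $n=1$ is essentially sound: $P_j$ is rank one, so the claim becomes $\|h_j\|^2_{L^2(w)}\lesssim \lambda_j^{-1/2}$, and the region-by-region pointwise bounds do give this (the paper instead proves the dual local estimate $\int_{-M}^M h_j^2\,dt\le CM\,j^{-1/2}$ and sums over dyadic shells, but the two calculations are close cousins). For $n\ge 2$, however, your proposal contains a genuine gap that you yourself flag and do not resolve: neither the weighted limiting-absorption estimate for $(H-z)^{-1}$ nor the inductive tensor decomposition is carried out, and the second route stalls precisely at the difficulty you identify, that $(1+|x|)^{-\alpha}$ with $\alpha\in(1,2)$ does not factor into summable one-dimensional weights. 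As written, the argument stops at a plan.

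The idea that the paper uses, and that is missing from your proposal, is a two-step dimensional collapse that sidesteps the weight-factorization obstruction entirely. First, reduce to the \emph{unweighted} local estimate
\[
\int_{[-M,M]^n}\bigl|\chi_{[k,k+1)}(H)f(x)\bigr|^2\,dx\le CM\,k^{-1/2}\|f\|_2^2,
\]
which implies the weighted bound simply by summing over dyadic shells (using $\alpha>1$); this removes the weight from the analytic part of the argument, so nothing ever needs to factor. Second, decompose $f=\sum_{i=1}^n f_i$ where $f_i$ is supported (spectrally) on multi-indices $\mu$ with $\mu_i\ge|\mu|/n$, so that the $i$-th coordinate index is comparable to $k$. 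For, say, $f_1$, observe that since the integrand is nonnegative one can drop the constraint on $x_2,\dots,x_n$ and bound $\int_{[-M,M]^n}$ by $\int_{[-M,M]\times\mathbb{R}^{n-1}}$; the integrals over $x_2,\dots,x_n$ then become exact inner products $\langle h_{\mu_i},h_{\nu_i}\rangle=\delta_{\mu_i\nu_i}$, which (together with $|\mu|=|\nu|$) force $\mu=\nu$ and kill all cross terms. This collapses the matrix to a diagonal sum $\sum_\mu|c(\mu)|^2\int_{-M}^M h_{\mu_1}^2\,dx_1$, and the one-dimensional bound $\int_{-M}^M h_{\mu_1}^2\le CM\mu_1^{-1/2}$ (trivial for $\mu_1\le M^2$, and from $|h_{\mu_1}|\lesssim\mu_1^{-1/4}$ on $[-M,M]$ for $\mu_1>M^2$) then gives $CMk^{-1/2}$ because $\mu_1\sim k$ on the support of $f_1$. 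Your $TT^*$/resolvent route could in principle work, but establishing $\|w^{1/2}(H-z)^{-1}w^{1/2}\|\lesssim|z|^{-1/2}$ near the spectrum for $\alpha$ only slightly larger than $1$ is a substantial result in itself (roughly equivalent to the lemma you are trying to prove), so as presented the circularity is not broken.
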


In our proof of \eqref{e4.77}  this inequality  \eqref{e1.9} takes the place of
the classical trace lemma  which  was the main tool in  \cite{CRV}. The  trace lemma tells that
 a function in the   Sobolev space ${\dot W}^{\alpha, 2}(\RN)$ can be
 restricted to ${\mathbb S}^{n-1}$ as an $L^2$ function. By taking Fourier transform and Plancherel's theorem,
  this  can be equivalently formulated as follows:
\[
\int_{\RN}\big|\chi_{[1-\epsilon, 1+\epsilon ]} ( \sqrt{-\Delta} )f(x)\big|^2 dx
 \leq C\epsilon
   \int_{\RR^n} |f(x)|^2|x|^\alpha dx.
\]
In contrast with the case of the Laplacian where the  trace inequality should take a scaling-invariant form, that is
 to say, the weight  should be homogeneous, we have  the inhomogeneous weight  $(1+|x|)^{-\alpha}$
 in both of the estimates \eqref{e1.8} and \eqref{e1.9}.  As to be seen later, this is  related  to  the fact that the spectrum of
 the Hermite operator $H$ is bounded away from the origin.

We  show Proposition~\ref{prop4.1} by making use of both  of the estimates \eqref{e1.8} and \eqref{e1.9}.
The proof of Proposition~\ref{prop4.1} divides into two parts depending on size of frequency in the spectral decomposition \eqref{e1.5}.
For the high frequency part ($k \gtrsim \delta^{-1}$ in  \eqref{e1.5})
the key tool is the estimate \eqref{e1.9}, which we combine with spatial localization argument based on the finite
 speed of propagation of the wave operator
$\cos(t\!\sqrt H)$.  The estimate \eqref{e1.9} can be compared  with the restriction-type estimate  due to
Karadzhov \cite{Kar}:
\begin{eqnarray}\label{e6.1}
\|\chi_{[k,k+1)}(H)\|_{2\to p} \leq Ck^{\frac{n}{2}(\frac{1}{2}-\frac{1}{p})-\frac{1}{2}},  \ \ \ \  \ \ \forall k\geq 1.
\end{eqnarray}
 The bound  in \eqref{e1.9}
 is much smaller  than  that in \eqref{e6.1} when $k$ is large.  So, the estimate  \eqref{e1.9} becomes more efficient
 in the high frequency regime.  In fact,   the estimate \eqref{e6.1} was  used to  show the sharp $L^p$--bounds on
 $\mathfrak S_\delta$ for $ {2n}/{(n-2)}\leq p\leq \infty,$ $n\ge 2$, \cite[Proposition 5.6]{CLSY}.
 In the low frequency part ($k \lesssim \delta^{-1}$ in  \eqref{e1.5}), inspired by \cite[Lemma 5.7]{CLSY},
  we directly obtain the estimate  using the  estimate \eqref{e1.8}.
 The estimate \eqref{e1.8} does not seem  to be so efficient since  the bound gets worse as the  frequency increases,
 but it is remarkable that this bound is good enough to yield the sharp result in Theorem~\ref{th1.2} via
 balancing the estimates for  low and high frequencies (see Remark \ref{re:aboutdelta}).

  \subsection*{Organization of the paper} The rest of the paper is organized as follows.
 In Section 2 we prove Lemma~\ref{prop2.1},  Lemma~\ref{le3.1} and the Littlewood-Paley
  inequality for the  Hermite operator, which provide basic estimates required
 for the proof of  Proposition \ref{prop4.1}.   We give  the proof  of  the sufficiency part of
   Theorem~\ref{th1.2} in Section \ref{sec3} by establishing
 the square function estimate in Proposition \ref{prop4.1}. In Section \ref{sec4} we show the
 sharpness of the summability indices, hence we  complete the proofs  of
 Theorem~\ref{th1.1} and Theorem~\ref{th1.2}.

\medskip

\section{Some weighted estimates for the Hermite operator }
 \setcounter{equation}{0}
 In this section, we prove Lemma \ref{prop2.1}, Lemma  \ref{le3.1} and the Littlewood-Paley inequality  for  the Hermite operator
 in
${\mathbb R}^n$,  which are  to be used  in the proof of the sufficiency part of Theorem~\ref{th1.2} in Section 3.

\subsection{Proof of  Lemma~\ref{prop2.1} }  In order to show Lemma~\ref{prop2.1},
we use the following Lemmas~\ref{le2.2}, \ref{le2.3} and  \ref{le2.4}.

\begin{lemma}\label{le2.2} For all $\phi\in {\mathscr S}(\mathbb R^n)$, we have
$
\|\phi\|_2\leq \|H\phi\|_2
$
and   $
\|H^k\phi\|_2\leq \|H^{k+m}\phi\|_2
$  for any $k,m\in \NN$.
\end{lemma}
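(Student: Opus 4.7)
The plan is to reduce both inequalities to simple pointwise comparisons between the eigenvalues of $H$ via Plancherel's identity on the Hermite basis. The key observation is that the spectrum of $H$ is bounded below by $n \geq 1$, since $H\Phi_\mu = (2|\mu|+n)\Phi_\mu$ and $|\mu| \geq 0$.

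First, expand $\phi = \sum_\mu \langle \phi, \Phi_\mu\rangle \Phi_\mu$ in the Hermite basis, which is legitimate for $\phi \in \mathscr{S}(\mathbb{R}^n)$. Since $\{\Phi_\mu\}$ is a complete orthonormal system in $L^2(\mathbb{R}^n)$, Parseval gives
\[
\|H^j \phi\|_2^2 \;=\; \sum_\mu (2|\mu|+n)^{2j} \,|\langle \phi,\Phi_\mu\rangle|^2
\]
for every non-negative integer $j$; the rapid decay of the Hermite coefficients of a Schwartz function ensures these sums converge.

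Now compare termwise. For the first inequality, $(2|\mu|+n)^2 \geq n^2 \geq 1$ for all $\mu$, so
\[
\|H\phi\|_2^2 \;=\; \sum_\mu (2|\mu|+n)^2 |\langle \phi,\Phi_\mu\rangle|^2 \;\geq\; \sum_\mu |\langle \phi,\Phi_\mu\rangle|^2 \;=\; \|\phi\|_2^2.
\]
For the second, $(2|\mu|+n)^{2(k+m)} = (2|\mu|+n)^{2k}\,(2|\mu|+n)^{2m} \geq (2|\mu|+n)^{2k}$ since $(2|\mu|+n)^{2m} \geq n^{2m} \geq 1$, and the same termwise comparison yields $\|H^{k+m}\phi\|_2 \geq \|H^k\phi\|_2$.

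There is no real obstacle here; the lemma is a direct consequence of the spectral gap $\operatorname{spec}(H) \subset [n,\infty) \subset [1,\infty)$ and Parseval's identity. The only minor point to mention is the justification that $H^j\phi \in L^2$ for $\phi \in \mathscr{S}$, which follows from $H$ preserving $\mathscr{S}(\mathbb{R}^n)$ (so each $H^j\phi$ is again Schwartz, hence in $L^2$).
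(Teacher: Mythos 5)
Your proof is correct and uses the same idea as the paper: the paper's one-line argument simply invokes that the first eigenvalue of $H$ is at least $1$, which is exactly the spectral-gap observation you make explicit via Parseval's identity on the Hermite basis. You have just written out the details the paper leaves implicit.
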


\begin{proof}
This follows from the fact  that  the first eigenvalue of $H$ is bigger than or equal to $1$.
\end{proof}

\begin{lemma}\label{le2.3}
Let $n=1$. Then, for all $\phi\in {\mathscr S}(\mathbb R)$,
$$
\|x^2\phi\|^2_2+\Big\|\frac{d^2}{d x^2}\phi\Big\|_2^2+\Big\|x\frac{d}{dx}\phi\Big\|_2^2\leq3 \|H\phi\|^2_2.
$$
\end{lemma}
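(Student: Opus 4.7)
The plan is to expand $\|H\phi\|_2^2$ directly using $H\phi=-\phi''+x^2\phi$ and extract the three norms on the left-hand side from the expansion, controlling any leftover terms by Lemma~\ref{le2.2}. Specifically, I would write
\[
\|H\phi\|_2^2=\langle -\phi''+x^2\phi,\,-\phi''+x^2\phi\rangle=\|\phi''\|_2^2+\|x^2\phi\|_2^2-2\,\Re\langle \phi'',x^2\phi\rangle,
\]
so the whole question is to evaluate the cross term $\Re\langle\phi'',x^2\phi\rangle$ and show that it produces $-\|x\phi'\|_2^2+\|\phi\|_2^2$ (up to sign conventions).

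The cross term is handled by two integrations by parts. First, $\int\phi''\,\overline{x^2\phi}\,dx=-\int\phi'\,\overline{(x^2\phi)'}\,dx=-2\int x\phi'\overline{\phi}\,dx-\int x^2|\phi'|^2\,dx$. Taking real parts and noting that $\Re\int x\phi'\overline\phi\,dx=\tfrac12\int x\,\tfrac{d}{dx}|\phi|^2\,dx=-\tfrac12\|\phi\|_2^2$, I obtain
\[
\Re\langle \phi'',x^2\phi\rangle=\|\phi\|_2^2-\|x\phi'\|_2^2.
\]
Substituting back gives the clean identity
\[
\|H\phi\|_2^2=\|\phi''\|_2^2+\|x^2\phi\|_2^2+2\|x\phi'\|_2^2-2\|\phi\|_2^2,
\]
i.e.\ $\|\phi''\|_2^2+\|x^2\phi\|_2^2+2\|x\phi'\|_2^2=\|H\phi\|_2^2+2\|\phi\|_2^2$.

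To finish, I would invoke Lemma~\ref{le2.2} in the form $\|\phi\|_2\le\|H\phi\|_2$, which upgrades the right-hand side to at most $3\|H\phi\|_2^2$. Since $\|x\phi'\|_2^2\le 2\|x\phi'\|_2^2$, the inequality claimed in Lemma~\ref{le2.3} follows immediately. There is no real obstacle here; the only thing to be careful about is bookkeeping of the signs and complex conjugates in the integration by parts, and the implicit decay assumption on $\phi\in\mathscr{S}(\mathbb R)$ which justifies discarding all boundary terms at $\pm\infty$.
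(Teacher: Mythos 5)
Your proof is correct and follows essentially the same route as the paper: expand $\|H\phi\|_2^2$, evaluate the cross term $\Re\langle\phi'',x^2\phi\rangle$ by integration by parts to arrive at the identity $\|\phi''\|_2^2+\|x^2\phi\|_2^2+2\|x\phi'\|_2^2=\|H\phi\|_2^2+2\|\phi\|_2^2$, and then apply Lemma~\ref{le2.2}. The only cosmetic difference is that you perform both integrations by parts on $\langle\phi'',x^2\phi\rangle$ at once, whereas the paper does one integration by parts first and then evaluates $2\Re\langle\phi',2x\phi\rangle$ separately; the resulting identity and conclusion are identical.
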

\begin{proof}
Since $\|H\phi\|^2_2= \langle (-\frac{d^2}{d x^2}+x^2)\phi,(-\frac{d^2}{d x^2}+x^2)\phi\rangle$,
a simple calculation shows
\begin{eqnarray*}
\|H\phi\|^2_2= \Big\|\frac{d^2}{d x^2}\phi\Big\|_2^2+2{\rm Re}
\Big\langle \frac{d}{d x}\phi,2x\phi\Big\rangle+2 \Big\|x\frac{d}{dx}\phi\Big\|_2^2+\|x^2\phi\|^2_2.
\end{eqnarray*}
We now observe
$
2{\rm Re}\langle \frac{d}{d x}\phi,2x\phi\rangle=\left\langle \frac{d}{d x}\phi,2x\phi\right\rangle+\langle 2x\phi,\frac{d}{d x}\phi\rangle
=-2\langle \phi, \phi\rangle.
$
This and the above give
\begin{eqnarray*}
\|x^2\phi\|^2_2+\Big\|\frac{d^2}{d x^2}\phi\Big\|_2^2+2 \Big\|x\frac{d}{dx}\phi\Big\|_2^2&=&\|H\phi\|^2_2+2\|\phi\|_2^2\leq 3\|H\phi\|^2_2
\end{eqnarray*}
as desired.  For the last inequality we use Lemma \ref{le2.2}.
\end{proof}

\begin{lemma}\label{le2.4} Let $n=1$. Then, for $\phi\in {\mathscr S}(\mathbb R)$ and for $k\in \NN$,  we have
\begin{eqnarray}\label{e2.2}
\|x^{2k}\phi\|_2\leq C_k\|H^k\phi\|_2
\ \ \ {\rm and} \ \ \
\|Hx^{2(k-1)}\phi\|_2\leq D_k\|H^k\phi\|_2.
\end{eqnarray}
\end{lemma}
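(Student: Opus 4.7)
The plan is to prove the two inequalities simultaneously by induction on $k\ge 1$, calling them $(A_k)$ and $(B_k)$ respectively. The key reduction is that once $(B_{k+1})$ is established, $(A_{k+1})$ follows immediately by applying Lemma~\ref{le2.3} to $\psi = x^{2k}\phi$, which is again in $\mathscr S(\mathbb R)$: indeed $\|x^{2(k+1)}\phi\|_2 = \|x^2(x^{2k}\phi)\|_2 \le \sqrt{3}\,\|H(x^{2k}\phi)\|_2$, and the right-hand side is exactly the quantity controlled by $(B_{k+1})$. Consequently I only need to verify $(B_k)$ for every $k\ge 1$. The base case $(B_1)$ is trivial since its left-hand side is just $\|H\phi\|_2$.

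For the inductive step $k \to k+1$, the starting point is the commutator identity
\[
Hx^{2k} \;=\; x^{2k}H \,-\, 4k\, x^{2k-1}\partial_x \,-\, 2k(2k-1)\,x^{2k-2},
\]
obtained by a direct calculation from $H = -\partial_x^2 + x^2$. Applied to $\phi$, this decomposes $Hx^{2k}\phi$ into three pieces. The purely multiplicative pieces are handled at once by the inductive hypothesis: $\|x^{2k}H\phi\|_2 \le C_k \|H^{k+1}\phi\|_2$ by $(A_k)$ applied to $H\phi$, while $\|x^{2k-2}\phi\|_2 \le C_{k-1}\|H^{k-1}\phi\|_2 \le C_{k-1}\|H^{k+1}\phi\|_2$ by $(A_{k-1})$ combined with Lemma~\ref{le2.2}.

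The main obstacle is the cross term $\|x^{2k-1}\phi'\|_2$, for which no direct inductive estimate is available. My approach is to reduce it to purely multiplicative quantities via integration by parts. Using $\Re(\bar\phi'\phi) = \tfrac12\partial_x|\phi|^2$ and two integrations by parts (with boundary terms vanishing by the Schwartz assumption) one obtains
\[
\|x^{2k-1}\phi'\|_2^2 \;=\; (2k-1)(4k-3)\,\|x^{2k-2}\phi\|_2^2 \,-\, \Re\!\int_{\mathbb R} x^{4k-2}\,\overline{\phi''}\,\phi\, dx.
\]
The leading term is controlled by $(A_{k-1})$ as above. For the remainder, the Cauchy-Schwarz splitting $x^{4k-2} = x^{2k}\cdot x^{2k-2}$ yields the bound $\|x^{2k}\phi\|_2\,\|x^{2k-2}\phi''\|_2$. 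Writing $\phi'' = x^2\phi - H\phi$ gives $\|x^{2k-2}\phi''\|_2 \le \|x^{2k}\phi\|_2 + \|x^{2k-2}H\phi\|_2$, and both factors are bounded by a constant times $\|H^{k+1}\phi\|_2$ via $(A_k)$ and $(A_{k-1})$ (the latter applied to $H\phi$). Assembling these estimates produces $(B_{k+1})$, which via the reduction above also yields $(A_{k+1})$ and thereby closes the induction.
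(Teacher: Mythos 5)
Your proof is correct, and the overall skeleton matches the paper's: a simultaneous induction on $(A_k)$ and $(B_k)$, with $(A_{k+1})$ deduced from $(B_{k+1})$ via Lemma~\ref{le2.3} applied to $x^{2k}\phi$, and the inductive step for $(B_{k+1})$ driven by expanding $H(x^{2k}\phi)$ by the product rule. The base cases, the commutator identity, and the treatment of the multiplicative terms via $(A_k)$, $(A_{k-1})$ (applied to $\phi$ and to $H\phi$) together with Lemma~\ref{le2.2} all check out.

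Where you genuinely diverge from the paper is in the cross term. The paper writes the same identity in the grouped form \eqref{e2.3}, keeping the middle piece as $-2(2k-2)\,x\tfrac{d}{dx}\bigl(x^{2(k-2)}\phi\bigr)$, so that Lemma~\ref{le2.3} applies immediately (giving $\sqrt 3\,\|H(x^{2(k-2)}\phi)\|_2$), after which the inductive hypothesis $(B_{k-1})$ finishes the bound. You instead fully expand to $-4k\,x^{2k-1}\phi'$ and control $\|x^{2k-1}\phi'\|_2$ by a two-step integration by parts together with the Cauchy--Schwarz split $x^{4k-2}=x^{2k}\cdot x^{2k-2}$ and the relation $\phi''=x^2\phi-H\phi$. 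Your identity $\|x^{2k-1}\phi'\|_2^2=(2k-1)(4k-3)\|x^{2k-2}\phi\|_2^2-\Re\int x^{4k-2}\overline{\phi''}\phi\,dx$ is correct, and the resulting bound reduces entirely to $(A)$-type estimates. The net effect: the paper's route reinvokes Lemma~\ref{le2.3} and the $(B_{k-1})$ hypothesis inside the inductive step and is a bit slicker; your route needs neither inside the step (Lemma~\ref{le2.3} only appears in the $(B)\Rightarrow(A)$ reduction), at the cost of the extra integration-by-parts computation. Both close the induction cleanly.
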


\begin{proof}  We begin with noting that,  if $k=1$,
  the first estimate in  \eqref{e2.2} holds  with $C_1=\sqrt 3$ by Lemma~\ref{le2.3}, and the second with $D_1=1$.
  We now proceed to prove   \eqref{e2.2} for   $k\geq 2$  by induction.  Assume that
  \eqref{e2.2} holds for $k-1$ with some constants $C_{k-1}$ and $D_{k-1}$. A computation  gives
\begin{equation}
\label{e2.3}
\begin{aligned}
 H(x^{2(k-1)}\phi)&=-[(2k-2)(2k-3)-2(2k-2)(2k-4)]x^{2(k-2)}\phi \\
 & \qquad \quad -2(2k-2)x\frac{d}{dx}\big(x^{2(k-2)}\phi\big)+x^{2k-2}H\phi.
\end{aligned}
\end{equation}
By \eqref{e2.3},  Lemma~\ref{le2.3}, and our induction assumption we see that
\begin{eqnarray*}
& &  \|H(x^{2(k-1)}\phi)\|_2
 \\
&\leq& (2k-2)(3k-5)\|x^{2(k-2)}\phi\|_2
 +
2(2k-2)\|x\frac{d}{dx}(x^{2(k-2)}\phi)\|_2
 + \|x^{2k-2}H\phi\|_2\\
&\leq &  (2k-2)(3k-5)   C_{k-2}\|H^{k-2}\phi\|_2
 +2(2k-2)C_1\|H(x^{2(k-2)}\phi)\|_2+C_{k-1}\|H^k\phi\|_2\\
&\leq &   (2k-2)(3k-5)    C_{k-2}\|H^{k-2}\phi\|_2
 +2(2k-2)C_1D_{k-1}\|H^{k-1}\phi\|_2+C_{k-1}\|H^k\phi\|_2.
\end{eqnarray*}
Hence, we get the estimate
\[\|H(x^{2(k-1)}\phi)\|_2\leq D_k\|H^k\phi\|_2\] with
$D_k=    (2k-2)(3k-5)   C_{k-2}
 +4(2k-2)C_1D_{k-1}+C_{k-1}.
 $
On other hand, we also have
\begin{eqnarray*}
\|x^{2k}\phi\|_2\leq C_1\|H(x^{2(k-1)}\phi)\|_2\leq C_k \|H^k \phi\|_2\ \ \
\end{eqnarray*}
with $C_k=C_1D_k, k\geq 2$. So, we readily get the estimates in \eqref{e2.2}. This completes the proof.
\end{proof}

Now we are ready to prove Lemma~\ref{prop2.1}.

\begin{proof}[Proof of Lemma ~\ref{prop2.1}]
Define $H_i=-\frac{\partial^2}{\partial x_i^2}+x_i^2$, $i=1,2,\ldots,n$.
By Lemma~\ref{le2.4}
$$
\||x_i|^{2k} f\|_2\leq C_k \|H_i^{k} f\|_2
$$
for all positive natural numbers $k\in \NN$.
Hence, we have
$$
\|(1+|x|)^{2k}f\|_2^2
 \leq C_k\big(\|f\|_2^2+ \sum_{i=1}^{n}\| |x_i|^{2k} f\|_2^2\big)
 \leq C_k \big(\|f\|_2^2+ \sum_{i=1}^{n}
\|H_{i}^{k} f\|_2^2\big).
$$
Since all $H_i$ are non-negative selfadjoint operators and
commute strongly (that is, their spectral resolutions commute),
 the operators
$\prod_{i=1}^nH_{i}^{\ell_i}$ are non-negative selfadjoint  for all $\ell_i\in \mathbb{Z}_+$.
Hence
$$
1+\sum_{i=1}^{n}H_{i}^{2k}\leq(1+\sum_{i=1}^{n}H_{i})^{2k}   = (1+H)^{2k}
$$
for all $k\in \NN$. Combining this with the above inequality  we get
\begin{eqnarray*}
\|(1+|x|)^{2k}f\|_2^2
\leq
C_k\Big \langle\Big( 1+\sum_{i=1}^{n}H_{i}\Big)^{2k}f,\, f\Big\rangle =C_k
\|(1+H)^kf\|^2_2.
\end{eqnarray*}
This proves estimates \eqref{e1.8} for all $\alpha \in \NN$.
Now, by  virtue of L\"owner-Heinz inequality (see, e.g., \cite[Section I.5]{Co})
we can extend this estimate to all $\alpha\in
[0,\infty)$. This completes the proof of  Lemma~\ref{prop2.1}.
\end{proof}

\subsection{  The proof of   Lemma  \ref{le3.1}: Trace lemma for the Hermite operator}
Our proof of  the estimate  \eqref{e1.9}    is  inspired by  the argument in \cite[Theorem 3.3]{BR} where
the authors  obtained local smoothing estimate for the Hermite Schr\"odinger propagator.

\begin{proof}
To show  \eqref{e1.9}, it is sufficient to show
\begin{eqnarray}\label{e3.12dual}
\int_{[-M, M]^n}|\chi_{[k,k+1)}(H)f(x) |^2   dx
\leq  C  M k^{-\frac{1}{2}}\|f\|_2^2
\end{eqnarray}
for every $M\ge 1$. Indeed, the estimate \eqref{e1.9} immediately follows by
  decomposing $\mathbb R^n$ into dyadic shells and applying  \eqref{e3.12dual} to each of them because  $\alpha>1$.

Let us prove \eqref{e3.12dual}. For every $f\in{\mathscr S}(\RN)$,
 we may write  its Hermite expansion
  $f(x)=\sum_{\mu}\langle f, \Phi_{\mu}\rangle \Phi_\mu(x)$ as in \eqref{e1.3}.
 Considering this spectral decomposition, clearly  we may decompose
 \Be \label{decomp}  f=\sum\limits_{i=1}^n f_i \Ee
 such that $f_1,\dots, f_n$ are orthogonal to each other and, for $1\le i\le n$,
 $\mu_i\geq |\mu|/n$ whenever $\langle  f_i, \Phi_\mu \rangle\neq 0$ (see for example, \cite{BR}).
 Recalling   that the Hermite functions $\Phi_{\mu}$    are  eigenfunctions  for
the Hermite operator $H$, it is clear that
$$
\chi_{[k,k+1)}(H)f_i(x)=\sum\limits_{2|\mu|+n=k}\langle  f_i, \Phi_\mu \rangle  \Phi_\mu(x).
$$
Note that   $\mu_i\sim |\mu|$ if  $\langle  f_i, \Phi_\mu \rangle\neq 0$, so in order to show \eqref{e3.12dual}  it is enough to show that
 \Be \label{e3.12dual1}
\int_{[-M, M]^n}|\chi_{[k,k+1)}(H)f_i(x) |^2   dx
\leq  C  M \sum_{{2|\mu|+n=k}}\mu_i^{-\frac{1}{2}}|\langle  f_i, \Phi_\mu \rangle|^2
\Ee
for each $i=1,\dots, n$ and $M>0$.
By symmetry we have only to show \eqref{e3.12dual1} with $i=1$.
For the purpose  we do not need the particular structure of $f_1$, so
let us set  $g:=f_1$ for a simpler notation.

 Let us write  $g(x)=\sum_{\mu} c(\mu) \Phi_\mu(x)$ with $c(\mu)=\langle g, \Phi_{\mu}\rangle.$  Hence, we have
\[
|\chi_{[k,k+1)}(H)g(x)|^2=\sum_{2|\mu|+n=k}\sum_{2|\nu|+n=k} c(\mu) \overline{c(\nu)}   \prod_{i=1}^n  h_{\mu_i}(x_i) {h_{\nu_i}(x_i)}.
\]
Using this,  by Fubini's theorem  it follows that
\begin{eqnarray*}
 & &\hspace{-1cm}  \int_{[-M, M]^n}|\chi_{[k,k+1)}(H)g(x) |^2   dx
 \\
&\le & \sum_{2|\mu|+n=k}\sum_{2|\nu|+n=k} c(\mu) \overline{c(\nu)}\int_{-M}^Mh_{\mu_1}(x_1){h_{\nu_1}(x_1)}dx_1
  \prod_{i=2}^n \langle h_{\mu_i}, h_{\nu_i}\rangle.
 \end{eqnarray*}
Since $h_{\mu_i}$ are orthogonal to each other,
we have $\mu_i=\nu_i$ for $i=2,\ldots,n$  whenever $
\langle h_{\mu_i}, h_{\nu_i}\rangle\neq 0
$
 and we also have
$\mu_1=\nu_1$ since  $2|\mu|+n=k=2|\nu|+n$.  Thus,
\[
\int_{[-M, M]^n}|\chi_{[k,k+1)}(H)g(x) |^2   dx
\leq \sum_{2|\mu|+n=k} |c(\mu) |^2 \int_{-M}^M  h^2_{\mu_1}(x_1)dx_1 .
\]
Therefore, to complete the proof  it suffices to show that
\[\int_{-M}^M  h^2_{\mu_1}(t)dt\le CM\mu_1^{-1/2}.\]
If $\mu_1\leq M^2$, the estimate is trivial because $\|h_{\mu_1}\|_2=1$. Hence, we may assume $\mu_1> M^2$.
By the property of the Hermite functions (see \cite[Lemma 1.5.1]{Th3})
there exists a constant $C>0$ such that
$
|h_{\mu_1}(t)|\leq C\mu_1^{-1/4}$ provided that  $ t\in [-M, M]$ and $\mu_1>M^2$.  Thus, we get the desired estimate, which
completes the proof of Lemma~\ref{le3.1}.
\end{proof}

\subsection{An extension of   the estimate \eqref{e1.9}}
We modify the estimate \eqref{e1.9}  into  a form which is suitable for our purpose.
 For any function $F$ with support in $[0, 1]$ and $2\leq q<\infty$,  we  define
 \begin{eqnarray}\label{e3.1}
\| F\|_{N^2,q}:= \left(\frac{1}{N^2}\sum_{\ell = 1 }^{N^2}
\sup_{\lambda \in [\frac{\ell -1}{N^2}, \frac{\ell}{N^2} ) }
|F(\lambda)|^q\right)^{1/q}, \ \ \ \ N\in{\mathbb N}.
\end{eqnarray}
For $q=\infty$, we put $\|F\|_{N^2,\infty}=\|F\|_\infty$ (see \cite{COSY, CowS,  DOS}).
Then we have the following result which is a generalization of Lemma \ref{le3.1}.

\begin{lemma}
\label{le3.2}
For  $    \alpha>1$ we have
\begin{eqnarray*}
\int_{\RN}|F(\!\SH\,)f(x) |^2  (1+|x|)^{-\alpha} dx  \leq CN \|\delta_N F\|_{N^2,2}^2  \int_{\RR^n} |f(x)|^2 dx
\end{eqnarray*}
for any function $F$ with support in $[N/4, N]$ and $N\in {\mathbb N}$, where $\de_N F(\la)$ is defined by $F(N\la)$.
\end{lemma}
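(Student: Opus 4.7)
The plan is to deduce Lemma \ref{le3.2} from Lemma \ref{le3.1} by spectral expansion, Cauchy--Schwarz, and a counting argument that converts a sum involving $|F(\sqrt{2k+n})|^2$ into the Marcinkiewicz-type norm $\|\delta_N F\|_{N^2,2}^2$. Since the spectrum of $H$ is $\{2k+n:k\in\mathbb{N}_0\}$, one has $F(\sqrt H) = \sum_k F(\sqrt{2k+n})\chi_{[k,k+1)}(H)$, where the sum is effectively over $k$ with $\sqrt{2k+n}\in[N/4,N]$, i.e.\ $k\sim N^2$. Apply the triangle inequality in $L^2((1+|x|)^{-\alpha})$, control each $\|\chi_{[k,k+1)}(H)f\|_{L^2((1+|x|)^{-\alpha})}$ by Lemma \ref{le3.1} (which requires $\alpha>1$), and then Cauchy--Schwarz in $k$ yields
\[
\|F(\sqrt H)f\|_{L^2((1+|x|)^{-\alpha})}^2
 \leq C\Big(\sum_k |F(\sqrt{2k+n})|^2\, k^{-1/2}\Big)\sum_k \|\chi_{[k,k+1)}(H)f\|_2^2.
\]
The second factor equals $\|f\|_2^2$ by $L^2$-orthogonality of the spectral projections.

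For the first factor, I would use that $k\sim N^2$ gives $k^{-1/2}\sim N^{-1}$, and that consecutive eigenvalues of $\sqrt H$ in $[N/4,N]$ are spaced by $\sqrt{2(k+1)+n}-\sqrt{2k+n}\sim 1/N$, so each interval $[\tfrac{\ell-1}{N},\tfrac{\ell}{N})$ contains only $O(1)$ of them. Grouping $k$'s accordingly and dominating $|F(\sqrt{2k+n})|$ within each group by $F_\ell^{\ast}:=\sup_{\lambda\in[(\ell-1)/N,\ell/N)}|F(\lambda)|$ gives
\[
\sum_k |F(\sqrt{2k+n})|^2\, k^{-1/2}
 \leq CN^{-1}\sum_\ell (F_\ell^{\ast})^2
 = CN\,\|\delta_N F\|_{N^2,2}^2,
\]
the last step being the definition of the norm (recall $\delta_N F(\lambda)=F(N\lambda)$, so an $1/N^2$-interval in the $\delta_N F$ variable corresponds to an $1/N$-interval in the $F$ variable). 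Combining the two factors produces the bound $CN\|\delta_N F\|_{N^2,2}^2\|f\|_2^2$, as desired.

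The main obstacle is the grouping step: one must check that the intervals of length $1/N$ appearing implicitly in the definition of $\|\delta_N F\|_{N^2,2}$ are exactly matched to the spacing $\sim 1/N$ of $\sqrt H$-eigenvalues in the range $[N/4,N]$, so that no $k$ is lost and no interval receives more than $O(1)$ eigenvalues. Everything else is bookkeeping: the Cauchy--Schwarz step is efficient only because the projections $\chi_{[k,k+1)}(H)$ are orthogonal in $L^2$ (although not in the weighted space $L^2((1+|x|)^{-\alpha})$), and the regime $k\sim N^2$ is forced by $\supp F\subset[N/4,N]$.
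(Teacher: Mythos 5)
Your proof is correct and rests on the same ingredients as the paper's: Lemma~\ref{le3.1}, the spectral decomposition of $F(\sqrt H)$, unweighted $L^2$-orthogonality, and the $\sim 1/N$ spacing of $\sqrt H$-eigenvalues in $[N/4,N]$. The organization differs slightly. The paper dualizes first and then decomposes $\int|F(\sqrt H)f|^2\,dx$ over the blocks $\chi_{[(\ell-1)/N,\ell/N)}(\sqrt H)$, which line up exactly with the Marcinkiewicz-norm pieces; orthogonality in unweighted $L^2$ is then exact and Lemma~\ref{le3.1} (in dual form) is applied block by block, so no Cauchy--Schwarz is needed. You stay on the primal side in $L^2((1+|x|)^{-\alpha})$, where the projections are no longer orthogonal, and recover the same bound via the triangle inequality followed by Cauchy--Schwarz with the trace weight $k^{-1/4}$; that forces the extra bookkeeping step at the end converting $\sum_k|F(\sqrt{2k+n})|^2$ into $\sum_\ell(F_\ell^\ast)^2$, which is justified exactly as you say since consecutive $\sqrt{2k+n}$ are $\Theta(1/N)$ apart in this range. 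One small notational slip: in $F(\sqrt H)=\sum_k F(\sqrt{2k+n})\chi_{[k,k+1)}(H)$ you are conflating the Hermite index $m$ (eigenvalue $2m+n$) with the window label $k$ of Lemma~\ref{le3.1}; the clean version is $F(\sqrt H)=\sum_m F(\sqrt{2m+n})\,\chi_{[2m+n,2m+n+1)}(H)$, and the lemma then supplies the factor $(2m+n)^{-1/4}\sim m^{-1/4}\sim N^{-1/2}$, which is what your computation uses.
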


\begin{proof}  Since the operator $F(\!\SH\,)$ is selfadjoint, it is sufficient to show the dual estimate
\Be\label{e3.2}
\int_{\RN}|F(\!\SH\,)f(x) |^2 dx  \leq CN \|\delta_N F\|_{N^2,2}^2  \int_{\RR^n} |f(x)|^2  (1+|x|)^{\alpha}  dx.
\Ee
By orthogonality
\[
\int_{\RN}|F(\!\SH\,)f(x) |^2 dx  \,\le   \sum_{\ell = N^2/16}^{N^2} \Big \| \chi_{\big[{\ell-1\over N} ,{\ell\over N}\big)}(\!\SH\,)F(\!\SH\,)f \Big\|_{2}^2
\]
because $F$ is  supported in $[N/4, N]$. Note  that
\[ \|\chi_{[{(\ell-1) \over N } ,{\ell \over N })}
(\!\sqrt H\,)f \|_{2}^2   \le \|\chi_{\big[{(\ell-1)^2\over N^2} ,{(\ell-1)^2\over N^2} +2\big)}
(H)f \|_{2}^2\,.\] Hence, it follows that
\[
\int_{\RN}|F(\!\SH\,)f(x) |^2 dx \leq  \sum_{\ell = N^2/16}^{N^2}  \sup_{\,\lambda \in \big[{\ell-1\over N} ,{\ell\over N}\big) } |F( \lambda)|^2
\Big\|\chi_{\big[{(\ell-1)^2\over N^2} ,{(\ell-1)^2\over N^2} +2\big)}
(H)f \Big\|_{2}^2.
\]
Since $\frac{\ell-1}{N}\sim N$, applying  \eqref{e1.9} we  obtain
\[
\int_{\RN}|F(\!\SH\,)f(x) |^2 dx
\le  \frac CN \sum_{\ell = N^2/16}^{N^2}   \sup_{\lambda \in \big[{\ell-1\over N} ,{\ell\over N}\big) } |F( \lambda)|^2
 \int_{\RR^n} |f(x)|^2 (1+|x|)^\alpha dx.
\]
Thus, the estimate~\eqref{e3.2} follows from \eqref{e3.1}.
\end{proof}

In Lemma~\ref{le3.1}, the estimate \eqref{e1.9} is established for    $\alpha>1$  (see also \cite[Theorem 3.3]{BR}).
In what follows, we  use a bilinear interpolation theorem to extend the range of $\alpha$
to    $0<\alpha\leq 1$.
We recall that $[\cdot, \cdot]_{\theta}$ stands for the complex interpolation bracket (for example, see \cite{Lof, Cal}).

\begin{lemma}\label{le:interpolation}
Let $(A_i, B_i), i=1, 2$ and $(A,B)$ be interpolation pairs. Suppose
$T$ is a bilinear operator defined on $\oplus_{i=1}^2(A_i\cap B_i)$ with values in $A\cap B$  such that
$$
\|T(x_1, x_2)\|_{A}\leq M_0 \prod_{i=1}^2\|x_i\|_{A_i}, \quad
\|T(x_1, x_2)\|_{B}\leq M_1 \prod_{i=1}^2\|x_i\|_{B_i}.
$$
Then, for  $\theta\in [0,1]$, we have
$$
\|T(x_1, x_2)\|_{[A, B]_\theta}\leq M_0^{1-\theta}M_1^\theta\prod_{i=1}^2\|x_i\|_{[A_i, B_i]_\theta}.
$$
Thus $T$ can be extended continuously from  $ \oplus_{i=1}^2 [A_i, B_i]_\theta$ into $[A, B]_\theta$ for any $\theta\in [0,1]$.
\end{lemma}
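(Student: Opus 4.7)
The plan is to prove this by the classical complex interpolation method adapted to the bilinear setting, following Calderón's approach. Recall that $[A,B]_\theta$ is defined via the space $\mathcal{F}(A,B)$ of functions $f\colon \overline S\to A+B$ analytic on the open strip $S=\{0<\Re z<1\}$, continuous and bounded on $\overline S$, with $f(it)\in A$, $f(1+it)\in B$, and norm $\|f\|_{\mathcal F(A,B)}=\max\bigl(\sup_t\|f(it)\|_A,\,\sup_t\|f(1+it)\|_B\bigr)$. An element $x\in[A,B]_\theta$ has norm $\inf\{\|f\|_{\mathcal F(A,B)}\colon f(\theta)=x\}$.

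Given $x_i\in[A_i,B_i]_\theta$ and $\varepsilon>0$, first choose $f_i\in\mathcal F(A_i,B_i)$ with $f_i(\theta)=x_i$ and $\|f_i\|_{\mathcal F(A_i,B_i)}\le(1+\varepsilon)\|x_i\|_{[A_i,B_i]_\theta}$. Then I would form the auxiliary function
\[
F(z):=\Bigl(\tfrac{M_0}{M_1}\Bigr)^{z-\theta}T\bigl(f_1(z),f_2(z)\bigr),
\]
where the power is defined using the principal branch (so $|(M_0/M_1)^{z-\theta}|=(M_0/M_1)^{\Re z-\theta}$). Bilinearity of $T$ together with $T\colon(A_1+B_1)\times(A_2+B_2)\to A+B$ being bounded (which follows from the two hypotheses) shows that $F$ is continuous and bounded on $\overline S$, and analytic on $S$, hence $F\in\mathcal F(A,B)$.

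The key computation is the boundary control. On $\Re z=0$ one has $f_i(it)\in A_i$ with $\|f_i(it)\|_{A_i}\le\|f_i\|_{\mathcal F(A_i,B_i)}$, so using the first hypothesis,
\[
\|F(it)\|_A\le \Bigl(\tfrac{M_0}{M_1}\Bigr)^{-\theta}M_0\prod_{i=1}^2(1+\varepsilon)\|x_i\|_{[A_i,B_i]_\theta}=M_0^{1-\theta}M_1^{\theta}(1+\varepsilon)^2\prod_{i=1}^2\|x_i\|_{[A_i,B_i]_\theta}.
\]
Analogously, on $\Re z=1$ the second hypothesis gives the identical bound with $A$ replaced by $B$. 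Taking the supremum and using $F(\theta)=T(x_1,x_2)$, the definition of $[A,B]_\theta$ yields
\[
\|T(x_1,x_2)\|_{[A,B]_\theta}\le\|F\|_{\mathcal F(A,B)}\le M_0^{1-\theta}M_1^\theta(1+\varepsilon)^2\prod_{i=1}^2\|x_i\|_{[A_i,B_i]_\theta}.
\]
Letting $\varepsilon\to 0$ proves the desired inequality, and the continuous extension to $[A_1,B_1]_\theta\times[A_2,B_2]_\theta$ follows by a standard density argument.

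I do not expect any serious obstacle. The only point requiring slight care is verifying that $z\mapsto T(f_1(z),f_2(z))$ is analytic and bounded on the strip in the sense required by $\mathcal F(A,B)$; this is routine once one observes that a bilinear operator bounded on each coordinate component is jointly continuous on $(A_1+B_1)\times(A_2+B_2)$ with norm at most $M_0+M_1$, and that the difference quotient of a product of two analytic Banach-valued functions decomposes in the usual way via bilinearity. The renormalization factor $(M_0/M_1)^{z-\theta}$ is the standard device that converts the worse bound $\max(M_0,M_1)$ into the optimal geometric mean $M_0^{1-\theta}M_1^\theta$.
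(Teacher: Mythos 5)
The paper does not prove this lemma itself; it cites Calder\'on \cite[p.118]{Cal}, and your argument is precisely the Calder\'on complex-method proof — choose near-extremal representatives $f_i\in\mathcal F(A_i,B_i)$, multiply by the conformal factor $(M_0/M_1)^{z-\theta}$, and read off the boundary estimates. That structure is correct and is what the cited source does.

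There is, however, a genuine gap in your treatment of the one point you flagged as ``routine.'' You assert that ``a bilinear operator bounded on each coordinate component is jointly continuous on $(A_1+B_1)\times(A_2+B_2)$ with norm at most $M_0+M_1$, which follows from the two hypotheses.'' This is false, and it cannot follow: the hypotheses control $T$ on $A_1\times A_2\to A$ and on $B_1\times B_2\to B$, but say nothing about the mixed blocks. If $x_1=a_1+b_1$ and $x_2=a_2+b_2$ with $a_i\in A_i$, $b_i\in B_i$, then bilinearity produces the terms $T(a_1,b_2)$ and $T(b_1,a_2)$, whose norms in $A+B$ are not estimated by either hypothesis; indeed, since $T$ is only given on $\oplus_i(A_i\cap B_i)$, these expressions need not even be defined. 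Consequently you have not actually justified that $F(z)=(M_0/M_1)^{z-\theta}T(f_1(z),f_2(z))$ lands in $A+B$, is well defined, or is analytic.

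The standard fix (and what Calder\'on does) avoids $A_i+B_i$ entirely: first take $x_i\in A_i\cap B_i$, where $T$ is given, and choose $f_i$ from the dense class $\mathcal F^0(A_i,B_i)$ of functions of the form $e^{\delta z^2}\sum_k e^{\lambda_k z}\,a_k$ with $a_k\in A_i\cap B_i$; the interpolation norm of an element of $A_i\cap B_i$ can be computed over $\mathcal F^0$ alone. Then $f_i(z)\in A_i\cap B_i$ for every $z$, so $T(f_1(z),f_2(z))\in A\cap B\subset A+B$ is defined for all $z$, and analyticity is immediate because bilinearity turns $T(f_1,f_2)$ into a finite sum of $e^{(\lambda_j+\mu_k)z+2\delta z^2}\,T(a_j,b_k)$. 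The three-lines argument then runs exactly as you wrote, giving the bound for $x_i\in A_i\cap B_i$, and the final sentence of the lemma (continuous extension to all of $[A_i,B_i]_\theta$) is then obtained by the density of $A_i\cap B_i$ in $[A_i,B_i]_\theta$ — which is the intended content of ``extended continuously,'' not a consequence of a boundedness on $(A_1+B_1)\times(A_2+B_2)$ that is not available.
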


For the proof of Lemma~\ref{le:interpolation}, we refer the reader  to  \cite[p.118]{Cal}.
Making use of Lemma \ref{le:interpolation}, we obtain  the following result.

\begin{lemma}
\label{le33.1}
Let  $  0<  \alpha \leq 1$, $N\in {\mathbb N}$, and let $F$ be  a function supported  in $[N/4, N]$. Then, for  any $\varepsilon>0$,  we have
\begin{eqnarray}\label{bbbb}\hspace{0.6cm}
\int_{\RN}|F(\!\SH\,)f(x) |^2 (1+|x|)^{-\alpha} dx  \leq C_\varepsilon N^{\frac{\alpha}{1+\varepsilon}}
\|\delta_N F\|_{N^2,q}^2  \int_{\RR^n} |f(x)|^2   dx \bf
\end{eqnarray}
with $q={2\alpha^{-1}(1+\varepsilon)}$ for some constant $C_\varepsilon>0$ independent of $f$ and $F$.
\end{lemma}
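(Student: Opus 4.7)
The plan is to deduce Lemma~\ref{le33.1} from the trace-type estimate of Lemma~\ref{le3.2} together with a trivial spectral bound, via the bilinear complex interpolation Lemma~\ref{le:interpolation}. I would view the assignment $(F,f)\mapsto F(\!\SH\,)f$ as a bilinear operator $T$, linear in the multiplier $F$ (supported in $[N/4,N]$) and linear in $f\in L^2(\RN)$, taking values in a weighted $L^2(\RN)$ space. One then interpolates in the pair ``multiplier norm / target weight'', keeping $f\in L^2(\RN)$ throughout.

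First I would record the two endpoint estimates. At the trivial endpoint ($\alpha=0$), the spectral theorem applied to the self-adjoint operator $H$ gives
\[
\|T(F,f)\|_{L^2(\RN)}\le\|F\|_\infty\|f\|_2=\|\delta_N F\|_{N^2,\infty}\|f\|_2,
\]
the last equality being the very definition of $\|\cdot\|_{N^2,\infty}$ in \eqref{e3.1}. At the non-trivial endpoint, applying Lemma~\ref{le3.2} with $\alpha_0=1+\varepsilon>1$ yields
\[
\|T(F,f)\|_{L^2(\RN,(1+|x|)^{-(1+\varepsilon)}\wrt x)}\le CN^{1/2}\|\delta_N F\|_{N^2,2}\|f\|_2.
\]

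Now I would apply Lemma~\ref{le:interpolation} with interpolation parameter $\theta=\alpha/(1+\varepsilon)\in(0,1)$. Three standard identifications are needed. Trivially $[L^2(\RN),L^2(\RN)]_\theta=L^2(\RN)$ on the $f$-factor. On the target side, the Stein--Weiss identity for weighted $L^2$ gives
\[
[L^2(\RN),L^2(\RN,(1+|x|)^{-(1+\varepsilon)}\wrt x)]_\theta=L^2(\RN,(1+|x|)^{-\alpha}\wrt x).
\]
On the multiplier factor we need
\[
[\|\cdot\|_{N^2,\infty},\|\cdot\|_{N^2,2}]_\theta=\|\cdot\|_{N^2,q},\qquad \tfrac{1}{q}=\tfrac{1-\theta}{\infty}+\tfrac{\theta}{2}=\tfrac{\theta}{2},
\]
so that $q=2(1+\varepsilon)/\alpha$, exactly as in the statement of Lemma~\ref{le33.1}.

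Combining these three identifications via Lemma~\ref{le:interpolation} gives the operator norm bound $1^{1-\theta}\cdot(CN^{1/2})^\theta=C^\theta N^{\alpha/(2(1+\varepsilon))}$, and squaring it inside the $L^2$-estimate produces the claimed factor $N^{\alpha/(1+\varepsilon)}$ in \eqref{bbbb}. The only step that requires real care, and what I would regard as the main obstacle, is the interpolation identity for the norms $\|\cdot\|_{N^2,q}$. Since each such norm is the $\ell^q$-norm on the finite index set $\{1,\dots,N^2\}$ (equipped with counting measure normalized by $N^{-2}$) applied to the nonnegative sequence $a_\ell(F)=\sup_{\lambda\in[(\ell-1)/N^2,\ell/N^2)}|\delta_N F(\lambda)|$, the identification reduces to the classical fact $[\ell^{q_0}(\mu),\ell^{q_1}(\mu)]_\theta=\ell^q(\mu)$; one interprets the norm as a mixed $\ell^q(L^\infty)$-norm, and the pointwise suprema on dyadic subintervals, together with the harmless measure weight $N^{-2}$, commute with the abstract complex interpolation.
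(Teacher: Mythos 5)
Your overall scheme is the same as the paper's: take the trivial spectral bound and Lemma~\ref{le3.2} (with exponent $1+\varepsilon$) as the two endpoints, and interpolate bilinearly via Lemma~\ref{le:interpolation} with parameter $\theta=\alpha/(1+\varepsilon)$. The minor cosmetic difference is that you interpolate the \emph{target} weight $(1+|x|)^{-\beta}$ while the paper applies Lemma~\ref{le3.2} in its dual form (putting the weight $(1+|x|)^{\beta}$ on the input $f$), interpolates the input space, and dualizes at the end; these are equivalent.

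The point you flagged as the ``main obstacle'' is in fact where your argument has a genuine gap, and it is exactly the point the paper's proof is engineered to sidestep. You want to apply Lemma~\ref{le:interpolation} to the space of \emph{all} multipliers supported in $[N/4,N]$ with the norm $\|\delta_N F\|_{N^2,q}$, which is an $\ell^q(L^\infty)$-type mixed norm (an $\ell^q$-sum of the pieces $L^\infty(I_\ell)$). Your assertion that ``the pointwise suprema \dots commute with the abstract complex interpolation'' amounts to the identity $[\ell^{2}(L^\infty),\ell^{\infty}(L^\infty)]_\theta=\ell^{q}(L^\infty)$, which is not a theorem you can invoke: the standard vector-valued complex-interpolation result $[L^{p_0}(A_0),L^{p_1}(A_1)]_\theta=L^p([A_0,A_1]_\theta)$ (Bergh--L\"ofstr\"om Thm.~5.1.2) requires $p_0,p_1<\infty$, and the inner $L^\infty$ factor is precisely the pathological case for the complex method. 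The paper avoids this by restricting to the finite-dimensional space $\mathcal A$ of step functions constant on each interval $I_\ell$; there the supremum disappears and $\|\cdot\|_{N^2,q}$ is literally the scalar $L^q(d\nu)$ norm for a finite measure $\nu$, so $[\mathcal A_2,\mathcal A_\infty]_s=\mathcal A_q$ is the classical scalar identity. The interpolation is then carried out only on $\mathcal A$, and the estimate is subsequently extended to general $F$ by the majorization trick: set $a_\ell=\sup_{I_\ell}|F|$, form the corresponding step function $G\in\mathcal A$, observe $\|\delta_N F\|_{N^2,q}=\|\delta_N G\|_{N^2,q}$, and use the spectral sum $\langle |F|^2(\!\SH\,)f,f\rangle=\sum_k|F(\sqrt k)|^2\sum_{2|\mu|+n=k}|\langle f,\Phi_\mu\rangle|^2$ to get $\|F(\!\SH\,)f\|_2\le\|G(\!\SH\,)f\|_2$. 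This last extension step is entirely absent from your proposal and is needed: without it you have no estimate for general $F$, and with your direct route the interpolation identity you'd need is not available. You should adopt the step-function-plus-majorization device.
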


\begin{proof}
Clearly, \eqref{bbbb} holds for $N=1$. Now
we fix $N\in{\mathbb N}$ and $N\geq 2$,  and let  $\mathcal A$ be  a collection  of functions supported in  $[N/4,N]$  which is given by
$$
\mathcal{A}:=\Big\{G\in L_{loc}^1: G(x)=a_0\chi_{[\frac{N}{4},\frac{[N^2/4]+1}{N})}(x)
+\sum_{i=[N^2/4]+2}^{N^2} a_i\chi_{[\frac{i-1}{N},\frac{i}{N})}(x), \  a_0,a_i\in \CC\Big\}.
$$
Then, for any Borel set $Q$,  we define a normalized counting measure $\nu$ on $\RR$ by setting
$$
\nu(Q):=\frac{1}{N^2-[N^2/4]-1}\left( \#\Big\{ i: \frac{2i-1}{2N}\in Q, i=[N^2/4]+2,\ldots,N^2\Big\} + C_{NQ}\right),
$$
where $N\geq 2$ and
\begin{eqnarray*}
C_{NQ}=
\left\{
\begin{array}{lll}
1\ \ \ &{\rm if}\ \ \ N/4\in Q;\\[6pt]
0\ \ \ &{\rm if}\ \ \ N/4\not\in Q.
\end{array}
\right.
\end{eqnarray*}
 We also define an $L^q$ norm on $\mathcal{A}$ by
$$
\|G\|_{L^q(d\nu)}:=\Big(\int_{N/4}^{N} |G(x)|^q d\nu(x)\Big)^{1/q}.
$$
Hence,   $
\|G\|_{L^q(d\nu)}\sim \|\delta_N G\|_{N^2,q},
 $
and the space  $\mathcal{A}$ equipped with this norm becomes a Banach space which is denoted by $\mathcal{A}_q$.
It also follows (for example, see \cite{Lof}) that
$$
[\mathcal{A}_2,\mathcal{A}_\infty]_s=\mathcal{A}_q, \quad  {1\over q}= {1-s\over 2},  \quad s\in [0,1].
$$
Let us denote by  $ \mathcal{B}_\alpha$ the space
\[
\mathcal{B}_\alpha:=\big\{f\in L_{loc}^1(\RN): \int_{\RN} |f(x)|^2(1+|x|)^\alpha dx<\infty\big\}
\]
equipped with the norm
$
\|f\|_{\mathcal{B}_\alpha}:=\|f\|_{L^2(\RN, \,  (1+|x|)^{-\alpha})}$.
Thus we have
$
[\mathcal{B}_\nu,\mathcal{B}_0]_{\theta}=\mathcal{B}_s, s=(1-\theta)\nu.
$
We consider the bilinear  operator $T$ given by
$$
T(G,f):=G(\!\sqrt{H}\,)f.
$$
By Lemma~\ref{le3.2} and duality, for any $\varepsilon>0$ we have
$$
\|T(G, f)\|_{L^2(\RN)}\leq CN^{1/2}\|G\|_{\mathcal{A}_2}\|f\|_{\mathcal{B}_{1+\varepsilon}}.
$$
Since
$
\int_{\RN}|G(\!\SH\,)f(x) |^2 dx  \leq \|\delta_N G\|_{\infty}^2  \int_{\RR^n} |f(x)|^2  dx,
$
we also have
 \[
\|T(G, f)\|_{L^2(\RN)}\leq \|G\|_{\mathcal{A}_\infty}\|f\|_{\mathcal{B}_0}.
 \]
Now, taking $A_1=\mathcal{A}_2$, $A_2=\mathcal{B}_{1+\varepsilon}$,
$B_1=\mathcal{A}_\infty$, $B_2=\mathcal{B}_{0}$, and $A=B=L^2(\RN)$,  we apply Lemma~\ref{le:interpolation} to get
$$
\|T(G, f)\|_{L^2(\RN)}\leq C N^{(1-\theta)/2}\|G\|_{\mathcal{A}_{2/(1-\theta)}}\|f\|_{\mathcal{B}_{(1-\theta)(1+\varepsilon)}}
$$
for $\theta\in [0,1]$. Let $(1-\theta)(1+\varepsilon)=\alpha$. Then,   equivalently,  for $G\in \mathcal{A}$  we have
\begin{equation}\label{e-special}
\int_{\RN}|G(\!\SH\,)f(x) |^2 dx \leq C_\varepsilon N^{\frac{\alpha}{1+\varepsilon}}
\|\delta_N G\|_{N^2,2(1+\varepsilon)/\alpha}^2  \int_{\RR^n} |f(x)|^2 (1+|x|)^\alpha dx.
\end{equation}

We now extend the estimate  \eqref{e-special}
to general functions supported in $[N/4,N]$.  For a function $F$ supported in $[N/4,N]$,  set
$a_0=\sup_{\lambda\in [\frac{N}{4},\frac{[N^2/4]+1}{N})}|F(\lambda)|$ and
 $
a_i=\sup_{\lambda\in [\frac{i-1}{N},\frac{i}{N})}|F(\lambda)|, i=[N^2/4]+2,\ldots,N^2
$
and define $G\in \mathcal{A}$ by
$$
G(x)=a_0\chi_{[\frac{N}{4},\frac{[N^2/4]+1}{N})}(x)+\sum_{i=[N^2/4]+2}^{N^2} a_i\chi_{[\frac{i-1}{N},\frac{i}{N})}(x).
$$
Then,   we clearly have $
\|\delta_N F\|_{N^2,q}=\|\delta_N G\|_{N^2,q}
$
and
$
|F(x)|\leq |G(x)|
$ for $x\in \RR$.
Since $\langle |F|^2(\!\SH\,)f,f \rangle=\sum\limits_{k\in 2\mathbb N_0+n}|F|^2(\!\sqrt{k})\sum\limits_{n+2|\mu|=k} |\langle f, \Phi_\mu \rangle|^2$,
we have
\[
\int_{\RN}|F(\!\SH\,)f(x) |^2 dx
\le   \sum_{k\in 2\mathbb N_0+n} |G|^2(\!\sqrt{k}\,)\!\sum_{n+2|\mu|=k}\! |\langle f, \Phi_\mu \rangle|^2=
\int_{\RN}|G(\SH\,)f(x) |^2 dx.
\]
Since $
\|\delta_N F\|_{N^2,q}=\|\delta_N G\|_{N^2,q}
$, using \eqref{e-special} we get
\begin{eqnarray*}
\int_{\RN}|F(\!\SH\,)f(x) |^2 dx \le  C_\varepsilon N^{\frac{\alpha}{1+\varepsilon}}
\|\delta_N F\|_{N^2,2(1+\varepsilon)/\alpha}^2  \int_{\RR^n} |f(x)|^2 (1+|x|)^\alpha dx.
\end{eqnarray*}
By duality the desired estimate follows. This completes the proof of Lemma~\ref{le33.1}.
\end{proof}

\subsection{Littlewood-Paley inequality for the Hermite operator} We now recall  a few  standard results in the theory of spectral multipliers
of non-negative selfadjoint operators (see for example, \cite{DSY, DOS}).
By the Feynman-Kac formula we have the   Gaussian upper bound on   the semigroup kernels $p_t(x,y)$ associated to
$e^{-tH}$:
\begin{equation}\label{e2.6}
0\leq p_t(x,y) \leq  (4\pi t)^{-n/2}\exp\left(-  {  |x-y|^2\over    4t } \right)
\end{equation}
for all $t>0$,  and $x,y\in \RN.$
\begin{prop}\label{prop2.5}
Fix a non-zero $C^{\infty}$ bump function  $\varphi$ on $\mathbb R$ such that
$
{\rm supp} \, \varphi \subseteq (1, 3) $. Let $\varphi_{k}(t)=\varphi(2^{-k} t)$,  $k\in \mathbb Z$, for $t>0$. Then, for any $-n<\alpha<n$,
\Be
\label{e.lwp}
\Big\|\Big(\sum_{k=-\infty}^{\infty}\big|\varphi_k(\!{\sqrt H}\,)f\big|^2\Big)^{1/2} \Big\|_{L^2({\mathbb R}^n, \, (1+|x|)^{\alpha})}
\leq C_p\|f\|_{L^2({\mathbb R}^n,\, (1+|x|)^{\alpha})}.
\Ee
\end{prop}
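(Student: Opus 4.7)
The plan is to reduce \eqref{e.lwp} to a uniform weighted $L^2$ bound for a Mihlin--H\"ormander family of spectral multipliers of $H$ by Rademacher randomization, and then to invoke the weighted spectral multiplier theorem for nonnegative self-adjoint operators with Gaussian heat kernel bounds. Two preliminary remarks are in order. First, since the spectrum of $\sqrt H$ sits in $[\sqrt n,\infty)$, the operator $\varphi_k(\sqrt H)$ vanishes for all sufficiently negative $k$, so the sum in \eqref{e.lwp} is effectively one-sided. Second, the weight $w(x)=(1+|x|)^{\alpha}$ belongs to the Muckenhoupt class $A_2(\mathbb R^n)$ precisely when $-n<\alpha<n$, as follows from the standard power-weight computation at infinity together with the boundedness of $w$ near the origin.

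Next, with $\{\varepsilon_k(t)\}$ the Rademacher sequence on $[0,1]$, Khintchine's inequality applied pointwise in $x$ gives
\[
\Big(\sum_{k}|\varphi_k(\sqrt H)f(x)|^2\Big)^{1/2}\,\sim\,\Big(\int_0^1 \Big|\sum_{k}\varepsilon_k(t)\varphi_k(\sqrt H)f(x)\Big|^2 dt \Big)^{1/2}.
\]
Integrating against $w(x)\,dx$ and swapping the integrals by Fubini, it suffices to show
\[
\|m_t(\sqrt H)f\|_{L^2(w)}\,\le\, C\,\|f\|_{L^2(w)}
\]
uniformly in $t\in[0,1]$, where $m_t(\lambda)=\sum_{k}\varepsilon_k(t)\varphi(2^{-k}\lambda)$. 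Because the summands have pairwise disjoint dyadic supports and $\varepsilon_k(t)=\pm 1$, the symbol $m_t$ satisfies $\sup_{\lambda>0}|\lambda^j m_t^{(j)}(\lambda)|\le C_j$ for every $j\ge 0$, with constants independent of $t$; that is, $m_t$ is a Mihlin--H\"ormander multiplier with uniform bounds.

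Finally, since $H$ has Gaussian heat kernel bounds \eqref{e2.6}, the weighted spectral multiplier theorem on spaces of homogeneous type developed in the spirit of \cite{DSY, DOS} shows that any such Mihlin--H\"ormander multiplier of $H$ is bounded on $L^2(\mathbb R^n, w)$ for every $w\in A_2(\mathbb R^n)$, with norm controlled by the Mihlin--H\"ormander constants and $[w]_{A_2}$. Applying this to $m_t$ with $w=(1+|x|)^{\alpha}$ yields the desired uniform bound, and undoing the randomization proves \eqref{e.lwp}. The main obstacle is to invoke an $A_2$-weighted spectral multiplier theorem at the appropriate regularity level: this ultimately reduces to uniform Calder\'on--Zygmund size and H\"ormander-type smoothness estimates for the integral kernel of $m_t(\sqrt H)$, which can be extracted from \eqref{e2.6} together with the finite-propagation-speed property of $\cos(s\sqrt H)$ used elsewhere in the paper, after which the $A_2$ bound follows from the standard weighted theory of vector-valued Calder\'on--Zygmund operators.
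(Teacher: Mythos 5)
Your proof is correct and follows essentially the same route as the paper: Rademacher randomization via Khintchine, the observation that $(1+|x|)^{\alpha}\in A_2$ precisely for $-n<\alpha<n$, a uniform Mihlin--H\"ormander bound for the randomized symbol $m_t$, and then the weighted spectral multiplier theorem of \cite{DSY} under the Gaussian heat kernel bound \eqref{e2.6}. The only cosmetic difference is that the paper records the multiplier condition in the localized form $\sup_{R>0}\|\eta F(t,R\cdot)\|_{C^\beta}\le C_\beta$ for integer $\beta>n/2+1$ and then invokes Theorem~3.1 of \cite{DSY} directly, whereas you state the equivalent pointwise Mihlin estimates $|\lambda^j m_t^{(j)}(\lambda)|\le C_j$; for a sum of disjointly supported dyadic bumps these are interchangeable.
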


This can be proved by following  the standard argument, for example, see \cite[Chapter IV]{St2}. We include  a brief  proof for  convenience of  the reader.

\begin{proof}
Let us denote by  $\{r_k\}_{k\in \mathbb Z}$ the Rademacher functions.
Set
$$
F(t, \lambda):= \sum_{k=-\infty}^{\infty} r_k(t) \varphi_k(\lambda).
$$
Let $\eta$ be a nontrivial cutoff function such that $\eta\in C_c^\infty(\mathbb R_+)$.
A straightforward computation shows that  $\sup_{R>0}\|\eta F(t, R\lambda)\|_{C^\beta} \leq C_\beta$
uniformly in $t\in [0,1]$ for every integer $\beta> n/2 + 1$.
On another hand, we note that the function $|x|^{\alpha}$ belongs to the $A_2$ class if and only if $-n<\alpha <n$ (see \cite[Example 7.1.7]{Gra}).
Thus, $1+|x|^\alpha\in A_2$    and so is   $(1+|x|)^\alpha$ for $-n<\alpha <n$.
Then we may apply \cite[Theorem 3.1]{DSY} to get
\begin{eqnarray*}
\big \|F (t, {H}) f\,\big\|^2_{L^2({\mathbb R}^n, \,  (1+|x|)^{\alpha})}\leq C \big\|f\,\big\|^2_{L^2({\mathbb R}^n, \, (1+|x|)^{\alpha})}
\end{eqnarray*}
with $C>0$  uniformly in $t\in [0,1].$  Since  $ \sum_{k=-\infty}^{\infty} |\varphi_k({H})f |^2
 \cong \int_0^1\big|F (t, {H}) f\ \big|^2  dt$ by the property of the Rademacher
 functions, taking integration in $t$ on both sides of the above inequality
yields \eqref{e.lwp}.
This proves
Proposition~\ref{prop2.5}.
\end{proof}

\section{Proof of sufficiency part of  Theorem ~\ref{th1.2}}
\label{sec3}
 \setcounter{equation}{0}
  In  this section
we prove the sufficiency part of Theorem~\ref{th1.2}, that is to say,  the operator  $S_*^\lambda(H)$ is bounded on $L^2(\RN, (1+|x|)^{-\alpha})$
whenever $0\leq \alpha<n$ and
$
\lambda>\max\{\tfrac{\alpha-1}{4},0\}.
$
To do so, as mentioned before,  we make use of the square function to control
the maximal operator.

 \subsection{Reduction to square function estimate}
 We begin with recalling  the well known identity
 \begin{eqnarray*}\label{e4.1}
	\left(1-{|m|^2\over R^2}\right)^{\lambda }=C_{\lambda, \, \rho} R^{-2\lambda }\int_{|m|}^R (R^2-t^2)^{\lambda-\rho-1}t^{2\rho+1}
	\left(1-{|m|^2\over t^2}\right)^{ \rho}dt,
\end{eqnarray*}
where $\lambda>0$, $\lambda>\rho$, and
 $C_{\lambda, \, \rho}=2\Gamma(\lambda+1)/(\Gamma(\rho+1)\Gamma(\lambda-\rho))$. Using the argument in \cite[pp.278--279]{SW},
we have
\begin{eqnarray}\label{e4.2}
	S_{\ast}^{\lambda}(H)f(x)\leq  C'_{\lambda,\, \rho} \sup_{0<R<\infty}  \Big( {1\over R}\int_0^R |S_t^{\rho}(H) f(x)|^2dt\Big)^{1/2}
\end{eqnarray}
 provided that $\rho>-1/2$ and $\lambda  >\rho+1/2$.
Via dyadic decomposition, we   write $x^{\rho}_+=\sum_{k\in{\mathbb Z}} 2^{-k\rho} \phi^\rho (2^{k}x)$
 for some $\phi^\rho\in C_c^{\infty}(2^{-3}, 2^{-1})$. Thus
\begin{eqnarray*}\label{e4.3}
(1-|\xi|^2)_+^{\rho}=:
\phi_0^{\rho} (\xi)+\sum_{k=1}^{\infty} 2^{-k\rho} \phi_k^{\rho} (\xi),
 \end{eqnarray*}
 where $\phi_k^{\rho}=\phi(2^k(1-|\xi|^2)), k\geq 1$. We also note that ${\rm supp}\ \phi_0^{\rho}\subseteq \{\xi: |\xi|\leq 7\times 2^{-3}\}$ and
 $
 {\rm supp}\ \phi_k^{\rho}\subseteq \{\xi: 1-2^{ -1-k}\leq |\xi|\leq 1-2^{-k-3}\}.
$
Using  \eqref{e4.2},    for $\lambda>\rho+1/2$    we have
  \begin{eqnarray}\label{e4.4}\hspace{1cm}
S_{\ast}^{\lambda}(H)(f)
 &\leq&  C \Big(\sup_{0<R<\infty} {1\over R}\int_0^R \Big| \phi_0^{\rho}
\big({t^{-1}  \sqrt H}\big) f \Big|^2dt\Big)^{1/2} \\
&+  &
 C\sum_{k=1}^{\infty} 2^{-k\rho} \Big(\int_0^{\infty}
\Big|\phi_k^{\rho} \big({t^{-1} \sqrt H}\big) f \Big|^2{dt\over t}\Big)^{1/2}.
\nonumber \end{eqnarray}

Before we start the proof of Proposition~\ref{prop4.1}, we show that the sufficiency part of Theorem~\ref{th1.2} is a
consequence of Proposition~\ref{prop4.1}.

\begin{proof}[Proof of sufficiency part of Theorem ~\ref{th1.2}]   Since $\lambda>0$, choosing   $\eta>0$ which is
to be taken arbitrarily  small later such that
$\lambda-\eta>0$, we set  $\rho=\lambda-\frac12-\eta$.
With our choice of $\rho$ we can  use \eqref{e4.4}.
It is easy to obtain estimate for the first  term in the right hand side of \eqref{e4.4}.  Since $(1+|x|)^{-\alpha}$
is  an $A_2$  weight, by virtue of   \eqref{e2.6}  a standard argument (see for example~\cite[Lemma 3.1]{CLSY})
yields
  \begin{eqnarray*}
  \hspace{0.2cm}
 \Big\|\sup_{0<t<\infty} \Big|\phi_0^{\rho}
\big( {t^{-1}   \sqrt H}\big) f \Big| \, \Big\|_{L^2(\RN, \, (1+|x|)^{-\alpha})} \leq
C\big\| \mathscr M f  \big\|_{L^2(\RN,\, (1+|x|)^{-\alpha})},
 \end{eqnarray*}
where $\mathscr M$ is the Hardy-Littlewood maximal operator.  By the  Hardy-Littlewood maximal estimate the right hand side
 is bounded by $C\|f\|_{L^2(\RN, \, (1+|x|)^{-\alpha})}$. Hence,   in order to prove Theorem~\ref{th1.2},
 it is sufficient to handle the remaining terms in the right hand side of \eqref{e4.4}.

We first consider  $n=1$.
Using Minkowski's inequality and  the estimate  \eqref{e4.77} with $\delta=2^{-k}$,  by our choice of $\rho$ we obtain
\begin{eqnarray*}
& &\hspace{-1cm}
\Big\|\sum_{k=1}^{\infty} 2^{-k\rho} \Big(\int_0^{\infty}
\Big|\phi_k^{\rho} \big({t^{-1} \sqrt H}\big) f \Big|^2{dt\over t}\Big)^{1/2}\Big\|_{L^2({\mathbb R}, \, (1+|x|)^{-\alpha})} \\
&\leq&
	C  \sum_{k=1}^{\infty} 2^{-k(\lambda - \eta-\epsilon/2)}
	\|f\|_{L^2({\mathbb R}, \, (1+|x|)^{-\alpha})}.
	 \end{eqnarray*}
Taking $\eta $ and $ \epsilon$ small enough, the right hand side is bounded by $C\|f\|_{L^2({\mathbb R}, \, (1+|x|)^{-\alpha})}$ for any $\lambda>0$.
Hence, this gives the desired boundedness of  $S_*^\lambda(H)$ on $L^2({\mathbb R}, (1+|x|)^{-\alpha})$ for $n=1$ and $0\le \alpha<1$.

To handle the case $n\geq 2$,  we   assume $\alpha>1$ for the moment and the range of $\alpha$ is later to be extended by interpolation.
  Similarly as before, we use  \eqref{e4.77} with $\delta=2^{-k}$  to get
\begin{eqnarray*}
& &\hspace{-1cm}
\Big\|\sum_{k=1}^{\infty} 2^{-k\rho} \Big(\int_0^{\infty}
\Big|\phi_k^{\rho} \big({t^{-1}   \sqrt H}\big) f \Big|^2{dt\over t}\Big)^{1/2}\Big\|_{L^2({\mathbb R^n}, \, (1+|x|)^{-\alpha})} \\
&\leq&
	C  \sum_{k=0}^{\infty} 2^{-k(\lambda-\frac{\alpha-1}{4}-\eta)}	\|f\|_{L^2(\RN, \, (1+|x|)^{-\alpha})}.
	 \end{eqnarray*}
Thus, taking small enough $\eta$ we see that
$S_*^\lambda(H)$ is bounded  on $L^2(\RN, (1+|x|)^{-\alpha})$ for $n\ge 2$ and $1<\alpha<n$ provided that
$ \lambda>\frac{\alpha-1}4$.  On the other hand,   we note that  $S_*^\lambda(H)$ is bounded  on $L^2(\RN)$ for any $\lambda>0$,
see for example,   \cite[Corollary 3.3]{CLSY}.  Interpolation between these two estimates (\cite[Theorem (2.9)]{SW2}) gives
  $S_{\ast}^{\lambda}(H)$ is bounded on $L^2(\RN, \,  (1+|x|)^{-\alpha})$ for any $0<\alpha\leq 1$ as long as $\lambda>0$.
This proves  the sufficient part of Theorem~\ref{th1.2}.
\end{proof}

To complete  the proof of the sufficiency part of Theorem~\ref{th1.2},
it remains to prove Proposition~\ref{prop4.1}.

\newcommand{\fS}{\mathfrak S}

 \subsection{Weighted inequality  for the square function}
 In this subsection, we establish Proposition~\ref{prop4.1}.  For the purpose we decompose $\mathfrak S_\delta$ into high and low frequency parts.
 Let us set
 \begin{eqnarray*}
  \mathfrak S^{l}_{\delta}f(x) &=:& \Big( \int_{1/2}^{\delta^{-1/2}} \Big|\phi\Big(\delta^{-1}\Big(1-{ {H}\over t^2} \Big)\Big)f(x)\Big|^2
   {dt\over t}  \Big)^{1/2}, \\[2pt]
   \mathfrak S^{h}_{\delta}f(x) &=:& \Big( \int_{\delta^{-1/2}}^\infty \Big|\phi\Big(\delta^{-1}\Big(1-   { {H}\over t^2} \Big) \Big)f(x)\Big|^2
   {dt\over t}\Big)^{1/2}.
  \end{eqnarray*}
Since the first eigenvalue of the Hermite operator is larger than or equal to $1$,    $\phi\big(\delta^{-1}\big(1-{ {H}\over t^2} \big) \big)=0$
if $t\le 1$ because $\supp \phi\subset (2^{-3}, 2^{-1})$ and $\delta\le 1/2$. Thus, it is clear that
\begin{eqnarray}\label{e4.8}
\mathfrak S_\delta f(x)   &\le &
    \mathfrak S^{l}_{\delta}f(x)+\mathfrak S^{h}_{\delta}f(x).
  \end{eqnarray}

  \noindent
In order to prove   Proposition~\ref{prop4.1}  it is sufficient to
show  the following.

\begin{lemma} \label{le4.2}
Let $A_{\alpha, n}^{\epsilon}(\delta)$ be given by  \eqref{bbb}.
Then,  for all  $0<\delta\leq 1/2$ and $0<\epsilon \leq 1/2,$ we have the following estimates:
 \begin{eqnarray}
 \label{et12}
  \int_{\RR^n} |\mathfrak S_{\delta}^{l}f(x)|^2(1+|x|)^{-\alpha}dx &\leq& C\delta A_{\alpha, n}^{\epsilon}(\delta)
  \int_{\RR^n} |f(x)|^2(1+|x|)^{-\alpha}dx,
  \\
 \label{et121}
  \int_{\RR^n} |\mathfrak S_{\delta}^{h}f(x)|^2(1+|x|)^{-\alpha}dx &\leq& C\delta A_{\alpha, n}^{\epsilon}(\delta)
  \int_{\RR^n} |f(x)|^2(1+|x|)^{-\alpha}dx.
  \end{eqnarray}
\end{lemma}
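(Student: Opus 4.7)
The plan is to prove \eqref{et12} and \eqref{et121} separately, using different tools for the low-frequency ($t\le\delta^{-1/2}$) and high-frequency ($t\ge\delta^{-1/2}$) regimes. The low-frequency estimate will rely on the Sobolev-type inequality \eqref{e1.8} of Lemma~\ref{prop2.1}, while the high-frequency estimate will rely on the trace-type lemmas (Lemma~\ref{le3.2} for $\alpha>1$ and Lemma~\ref{le33.1} for $0<\alpha\le 1$) combined with a spatial-localization argument based on finite propagation of $\cos(s\sqrt{H})$.

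For \eqref{et12}, I would first perform a Littlewood-Paley decomposition $f=\sum_j \psi_j(\sqrt{H})f$ with $\psi_j$ supported in $[2^j,2^{j+1}]$, and use Proposition~\ref{prop2.5} to bound the weighted $L^2$ norm by the $\ell^2$-sum of the pieces. Because $\phi(\delta^{-1}(1-H/t^2))$ is spectrally localized near $H\sim t^2$, only indices $j$ with $2^j\le\delta^{-1/2}$ contribute to $\mathfrak S^l_\delta$, and the $t$-integral for each dyadic piece collapses to $t\sim 2^j$. On each such piece \eqref{e1.8} applied in its dual form converts the weight $(1+|x|)^{-\alpha}$ into a factor $2^{-j\alpha}$, using that $(1+H)^{\alpha/2}\sim 2^{j\alpha}$ on the spectral support. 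Combined with the standard orthogonality identity $\int_0^\infty\|\phi(\delta^{-1}(1-H/t^2))g\|_2^2\,dt/t \le C\delta\|g\|_2^2$, summing over $j\le \log_2\delta^{-1/2}$ yields the factor $\delta\, A_{\alpha, n}^{\epsilon}(\delta)$: a direct power $\delta^{(1-\alpha)/2}$ when $n\ge 2$ and $1<\alpha<n$, and a logarithmic loss producing $\delta^{-\epsilon}$ in the endpoint case $n=1$, $0\le\alpha\le 1$.

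For \eqref{et121}, I would dyadically partition $t\in[\delta^{-1/2},\infty)$ into intervals $[2^j,2^{j+1}]$ with $2^j\ge\delta^{-1/2}$ and analyse each interval separately. On $[2^j,2^{j+1}]$, $\phi(\delta^{-1}(1-H/t^2))$ is a spectral multiplier $F_t(\sqrt{H})$ with $F_t$ supported in an interval of length $\sim 2^j\delta$ around $\sqrt{H}\sim 2^j$, so $\|\delta_{2^j}F_t\|_{N^2,q}^q\sim \delta$ for the relevant $q$. Applying Lemma~\ref{le3.2} or Lemma~\ref{le33.1} yields a one-sided weighted estimate $\int |F_t(\sqrt{H})g|^2(1+|x|)^{-\alpha}\,dx\lesssim (\text{factor})\|g\|_2^2$. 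To transfer the weight to the source side, I would invoke finite propagation of $\cos(s\sqrt{H})$: the spectral cutoff $F_t$, viewed via its cosine transform, corresponds to a spatial kernel of effective diameter $\sim 1/(2^j\delta)$, much smaller than the scales on which $(1+|x|)^{-\alpha}$ varies away from the origin. A dyadic spatial decomposition $f=\sum_R f\chi_{\{|x|\sim R\}}$ then allows applying the one-sided bound to each $f_R$ and reassembling: the main term for each $R$ replaces $\|f_R\|_2^2$ by a quantity comparable to $\|f_R\|^2_{L^2((1+|x|)^{-\alpha})}$ via the local constancy of the weight, while the wave-packet tails of $F_t(\sqrt{H})f_R$ outside $|x|\sim R$ are controlled by the fast decay of the cosine-transform representation. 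Summing the resulting series in $j$ gives \eqref{et121}.

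The main obstacle is the high-frequency case: Lemma~\ref{le33.1} and Lemma~\ref{le3.2} produce the weight only on the target side, so the spatial-localization argument via finite propagation is essential, and the tails of the wave-packet representation must be controlled so that they do not spoil the small factor $A_{\alpha, n}^{\epsilon}(\delta)$. Balancing the trace-type bound against the width $\delta$ of the multiplier and the summation over $j$---particularly tracking the $\epsilon$-loss in Lemma~\ref{le33.1} when $0<\alpha\le 1$---so that the prefactor collapses to exactly $\delta\, A_{\alpha, n}^{\epsilon}(\delta)$ is the delicate book-keeping step.
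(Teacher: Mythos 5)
Your high‑frequency plan for \eqref{et121} matches the paper's strategy in outline (cosine‑transform decomposition of $\phi_\delta$, finite propagation of $\cos(s\sqrt H)$, dyadic spatial decomposition exploiting local constancy of the weight, rapid decay of the tails, and summation in the frequency parameter), so that part is essentially the same approach, albeit much less detailed than the paper's case analysis.

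The low‑frequency part, however, has a genuine gap. You assert that Lemma~\ref{prop2.1}, i.e.\ \eqref{e1.8}, ``applied in its dual form converts the weight $(1+|x|)^{-\alpha}$ into a factor $2^{-j\alpha}$'' on a piece spectrally localized near $\sqrt H\sim 2^j$. But the dual form of \eqref{e1.8} is $\|(1+H)^{-\alpha/4}g\|_2\le C\|g\|_{L^2((1+|x|)^{-\alpha})}$, which gives a \emph{lower} bound on $\|g\|_{L^2((1+|x|)^{-\alpha})}$ in terms of $\|g\|_2$ on such a piece, not an upper bound. The upper bound you need is false as you stated it: for $n\ge 2$ and $\alpha>1$ the sharp upper bound for $g=\chi_{[k,k+1)}(H)g$ is $\|g\|^2_{L^2((1+|x|)^{-\alpha})}\lesssim k^{-1/2}\|g\|_2^2\sim 2^{-j}\|g\|_2^2$ (Lemma~\ref{le3.1}, sharpness visible from \eqref{e7.00}), not $2^{-j\alpha}\|g\|_2^2$. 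This gain of $2^{-j}$ is precisely the inhomogeneous trace lemma \eqref{e1.9}/Lemma~\ref{le3.2} (or Lemma~\ref{le33.1} for $0<\alpha\le 1$), and the paper uses exactly these lemmas to prove the low‑frequency estimate \eqref{et12}: the key intermediate bound \eqref{e4.16}, $\int|\phi_\delta(t^{-1}\sqrt H)g|^2(1+|x|)^{-\alpha}\,dx\lesssim A_{\alpha,n}^\epsilon(\delta)\|(1+H)^{-\alpha/4}g\|_2^2$, is derived from Lemma~\ref{le3.2}/\ref{le33.1}, and only after assembling the square function does Lemma~\ref{prop2.1} enter, to replace $\|(1+H)^{-\alpha/4}f\|_2^2$ by $\|f\|^2_{L^2((1+|x|)^{-\alpha})}$ on the right‑hand side. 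If you instead try to make the low‑frequency argument go through with \eqref{e1.8} alone (using, say, the trivial bound $(1+|x|)^{-\alpha}\le 1$ and then the dual Sobolev inequality to return to the weighted norm), a short computation shows you lose a factor of $\delta^{1/2}$ relative to $\delta\,A^\epsilon_{\alpha,n}(\delta)$ --- exactly the extra gain the trace lemma provides. So the trace lemma is indispensable in the low‑frequency case too, and the division of labor in your plan (Sobolev for low frequency, trace lemma only for high frequency) does not reflect the actual structure of the proof.
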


Both of the proofs of the estimates \eqref{et12} and \eqref{et121}  rely on the generalized trace lemmata,
 Lemma~\ref{le3.2} and Lemma~\ref{le33.1}.
Though, there are distinct differences in their proofs.  As for \eqref{et12} we additionally use  the estimate \eqref{e1.8}
 which is efficient for the low frequency part.
Regarding  the estimate  \eqref{et121} we use the spatial localization argument which is based on the finite speed of
propagation of the Hermite wave operator $\cos(t\!\SH\,)$.
Similar strategy has been used to related  problems, see for example \cite{CLSY}. In this regards,  our proof of the
estimate \eqref{et121} is similar to that in \cite{CRV}.
In high frequency regime the localization strategy becomes more advantageous since the associated kernels enjoy tighter
localization. This allows us to handle the weight
$(1+|x|)^{-\alpha}$ in an easier way.   The choice of $\delta^{-\frac12}$  in the definitions of $\fS_\delta^l$, $\fS_\delta^h$
 is made by optimizing the estimates
which result from two different  approaches, see Remark \ref{re:aboutdelta}.

 \subsection{Proof of \eqref{et12}: low frequency part}
We start with  Littlewood-Paley decomposition associated with the operator $H$.
	Fix a  function $\varphi\in C^{\infty}  $  supported in $\{ 1\leq |s|\leq 3\}$ such that $\sum_{-\infty}^{\infty}\varphi(2^ks)=1$
	on ${\mathbb R}\backslash \{0\}$.
	By the spectral theory we have
	\begin{eqnarray}\label{e4.10}
	\sum_k \varphi_k(\!\sqrt{H}\,)f:=\sum_k\varphi(2^{-k}\sqrt{H}\,)=f\,.
	\end{eqnarray}
	for any 	$f\in L^2(\RN)$. Using  \eqref{e4.10}, we get
\Be  \label{e4.11}
 | \fS^{l}_{\delta}f(x)|^2
 \leq
  C\sum_{0\leq k\leq 1-{\rm log}_2{\sqrt{\delta} } }\int_{2^{k-1}}^{2^{k+2}}
  \Big|\phi\left(\delta^{-1}\left(1-{ {H}\over t^2} \right) \right)\varphi_k(\!\sqrt{H}\,)f(x)\Big|^2
  {dt\over t}
\Ee
   for $f\in L^2(\RN)\cap L^2(\RN, (1+|x|)^{-\alpha})$.
  To exploit disjointness of the spectral support $\phi\big(\delta^{-1}\big(1-{ {H}\over t^2} \big) \big)$ we make additional decomposition in $t$.
  For  $k\in{\mathbb Z}$ and  $i=0, 1, \cdots, i_0=[{8/\delta}] +1$
we set
\begin{eqnarray}
\label{e-Ii}
I_i=\left[2^{k-1} + i 2^{k-1}\delta, \, 2^{k-1} + (i+1)2^{k-1}\delta\right],
\end{eqnarray}
   so that
  $[2^{k-1}, 2^{k+2}]\subseteq \cup_{i=0}^{i_0} I_i$. Define  $\eta_i$  adapted to the interval $I_i$ by setting
  \begin{eqnarray}\label{e-eta}
\eta_i(s) = \eta\left( i+{ 2^{k-1} -s\over 2^{k-1}\delta}\right),
  \end{eqnarray}
where $\eta\in C_c^{\infty}(-1, 1)$ and $\sum_{i\in {\mathbb Z}} \eta(\cdot-i)=1$.
For simplicity we also set
\[  \phi_{\delta}(s\,):=\phi(\delta^{-1}\big(1-s^2  ) ).\]
We observe that,  for  $t\in I_i$,
$
\phi_{\delta}\left({s/t}\right)\eta_{i'} (s)\not=0$ only if $i-i\delta-3\leq i'\leq i+i\delta+3.
$
Hence,  for $t\in I_i$ we  have
$$  \phi_{\delta}\big( t^{-1}  \sqrt H \big)
  \varphi_k(\!\sqrt{H}\,)
 =  \sum_{i'=i-10}^{i+10}
 \phi_{\delta}\big({t^{-1}  \sqrt H}\big)
  \varphi_k(\!\sqrt{H}\,)
 \eta_{i'}  (\!\sqrt{H}\,),
$$
  and thus
   \begin{eqnarray*}
  & &\hspace{-0.8cm}
 \int_{2^{k-1}}^{2^{k+2}}
   \Big|\phi_{\delta}\big({t^{-1}  \sqrt H}\big)
  \varphi_k(\!\sqrt{H}\,)f\Big|^2    {dt\over t} \\
  &\leq& C\sum_i\sum_{i'=i-10}^{i+10}\int_{I_i} \Big|
 \phi_{\delta}\big({t^{-1}   \sqrt H}\big)
  \varphi_k(\!\sqrt{H}\,)
 \eta_{i'}  (\!\sqrt{H}\,)f\Big|^2   {dt\over t}.
 \end{eqnarray*}
  Substituting this into \eqref{e4.11}, we have that
  \[
   |\mathfrak S^{l}_{\delta}f(x)|^2 \leq C \!\!\! \sum_{0\leq k\leq 1-{\rm log}_2{\sqrt{\delta}}}\sum_i\sum_{i'=i-10}^{i+10}\int_{I_i}
  \left|\phi_{\delta}\big({t^{-1}   \sqrt H}\big)
  \eta_{i'}  (\!\sqrt{H}\,)\varphi_k(\!\sqrt{H}\,)f(x)\right|^2    {dt\over t}.
  \]
   Now we claim  that,   for $1\leq t\leq  \delta^{-1/2}$,
 \begin{equation}
 \label{e4.16}
 \int_{\RR^n} |\phi_{\delta}\big({t^{-1} \sqrt H}\big)g(x)|^2 (1+|x|)^{-\alpha} dx
  \leq C A_{\alpha, n}^{\epsilon}(\delta) \int_{\RR^n} |(1+H)^{-\alpha/4}g(x)|^2 dx.
\end{equation}
Before we begin to prove it, we show that this concludes the proof of estimate~\eqref{et12}.
Combining \eqref{e4.16} with the preceding inequality,  we see that
$  \int_{\RR^n} |\mathfrak S_{\delta}^{l}f(x)|^2(1+|x|)^{-\alpha}dx$ is bounded by
\[
CA_{\alpha, n}^{\epsilon}(\delta) \sum_{0\leq k\leq 1-{\rm log}_2{\sqrt{\delta}}} \sum_i\sum_{i'=i-10}^{i+10}\int_{I_i} \int_{\RR^n}
  \left|  \eta_{i'}  (\!\sqrt{H}\,)\varphi_k(\!\sqrt{H}\,)(1+H)^{-\alpha/4}f(x)\right|^2  dx  {dt\over t}.
  \]
Since the length of interval $I_i$ is comparable to $2^{k-1}\delta$, taking integration in $t$ and using disjointness of the spectral supports,
we get
    \begin{eqnarray*}
   \int_{\RR^n} |\mathfrak S_{\delta}^{l}f(x)|^2(1+|x|)^{-\alpha}dx
  &\leq& C\delta  A_{\alpha, n}^{\epsilon}(\delta) \int_{\RR^n}
  \left|  (1+H)^{-\alpha/4}f(x)\right|^2  dx.  
   \end{eqnarray*}
This, being combined with  Lemma~\ref{prop2.1}, yields  the desired estimate \eqref{et12}.

We now show the estimate
 \eqref{e4.16}. Let us consider the equivalent estimate
  \begin{equation}
 \label{e4.161}
 \int_{\RR^n} |\phi_{\delta}\big({t^{-1}   \sqrt H}\big)(1+H)^{\alpha/4}g(x)|^2 (1+|x|)^{-\alpha} dx
  \leq C A_{\alpha, n}^{\epsilon}(\delta) \int_{\RR^n} |g(x)|^2 dx.
\end{equation}
We first show the estimate for the case  $n\geq 2$.
  Let $N=8[t]+1$. Note that $\supp \phi_{ \delta}\big({\cdot/t}\big)\subset [N/4,N]$.
By  Lemma~\ref{le3.2},
 \begin{eqnarray*}
&&\hspace{-1cm}
  \int_{\RR^n} |\phi_{\delta}\big({t^{-1}   \sqrt H}\big)(1+H)^{\alpha/4}g(x)|^2 (1+|x|)^{-\alpha} dx
 \\
 &\leq&  CN
 \Big\|\phi_{\delta}\big({ t^{-1}N u}\big)(1+N^2 u^2)^{\alpha/4}\Big\|^2_{N^2,2}\int_{\RR^n} |g(x)|^2 dx.
  \end{eqnarray*}
 We now estimate $\|\phi_{\delta}\big({ t^{-1}N u}\big)(1+N^2 u^2)^{\alpha/4}\|^2_{N^2,2}$.
 Note that
$
\supp \,\phi_{\delta}\big({ t^{-1}N u}\big)\subset [\frac{t\sqrt{1-\delta}}{N},\, \frac{t\sqrt{1-\delta/4}}{N} ].
$
Since  the length of the interval $[\frac{t\sqrt{1-\delta}}{N},\, \frac{t\sqrt{1-\delta/4}}{N} ]\sim \delta$ and  $N\sim t\leq \delta^{-1/2}$,  we get
\begin{eqnarray}\label{ccc}  \hspace{0.5cm}
 &&\hspace{-1cm}\big\|\phi_{\delta}\big({ t^{-1}N u}\big)(1+N^2 u^2)^{\alpha/4}\big\|^2_{N^2,2} \nonumber\\
&\leq&
\nonumber
  \big\|\phi_{\delta}\big({ t^{-1}N u}\big)(1+N^2 u^2)^{\alpha/4}
\big\|^2_\infty \big\|\chi_{[\frac{t\sqrt{1-\delta}}{N},\, \frac{t\sqrt{1-\delta/4}}{N}]}\big\|^2_{N^2,2}\nonumber\\
&\leq&  C N^{\alpha-2}.
 \end{eqnarray}
Thus, noting $1/2\leq t\leq  \delta^{-1/2}$ and $\alpha>1$,  we obtain
\begin{eqnarray*}
& &\hspace{-0.8cm}
\int_{\RR^n} |\phi_{\delta}\big({t^{-1}   \sqrt H}\big)(1+H)^{\alpha/4}g(x)|^2 (1+|x|)^{-\alpha} dx
\\
 & \leq & C N^{\alpha-1}\int_{\RR^n} |g(x)|^2 dx \leq C\delta^{1/2-\alpha/2}\int_{\RN} |g(x)|^2 dx,
  \end{eqnarray*}
which  gives \eqref{e4.16} in the   dimensional case $n\geq 2$.

 Next we  prove  \eqref{e4.16} with $n=1$. Let $0\leq\alpha<1$ and  $N=8[t]+1$. Note that $\supp \phi_{ \delta}\big({\cdot/t}\big)\subset [N/4,N]$.
By  Lemma~\ref{le33.1},  for any $\varepsilon>0$ we have
\begin{eqnarray*}
{\rm LHS\ of\ } \eqref{e4.16}    \leq C_\varepsilon N^{\frac{\alpha}{1+\varepsilon}}
\Big\|\phi_{\delta}\big({ t^{-1}N u}\big)(1+N^2 u^2)^{\alpha/4}\Big\|^2_{N^2,\frac{2(1+\varepsilon)}{\alpha}}\int_{\mathbb R} |g(x)|^2 dx.
  \end{eqnarray*}
As before, in the same manner as  in \eqref{ccc} we have
\[
 \|\phi_{\delta}\big({ t^{-1}N u}\big)(1+N^2 u^2)^{\alpha/4}\|^2_{N^2,\frac{2(1+\varepsilon)}{\alpha}}
\leq C  N^{\frac{\alpha(\varepsilon-1)}{1+\varepsilon}},
 \]
so it follows that
\begin{eqnarray*}
\int_{\RR^n} |\phi_{\delta}\big({t^{-1}  \sqrt H}\big)(1+H)^{\alpha/4}g(x)|^2 (1+|x|)^{-\alpha} dx
   &\leq&C\delta^{-\frac{\alpha\varepsilon}{2(1+\varepsilon)}}\int_{\RR^n} |g(x)|^2 dx
  \end{eqnarray*}
  because  $1\leq t\leq \delta^{-1/2}$.
 This gives \eqref{e4.16} with $n=1$ and the  proof of estimate~\eqref{et12} is completed.
 \hfill{} $\Box$

 \subsection{Proof of  \eqref{et121}: high frequency part}
 We now make use of
 the finite speed of propagation of  the  wave operator $\cos(t\SH\,\,)$. From \eqref{e2.6}, it is known
 (see for example \cite{CouS})
 that
 the kernel of the operator $\cos(t\SH\,\,)$ satisfies
 \begin{equation}\label{e4.21}
\supp  K_{\cos(t\SH\,\,)} \subseteq \D(t):=\{ (x,\, y)\in \RN\times \RN: |x-y| \le t \}, \quad \forall t> 0\,.
\end{equation}
For any even function $F$ with $ \widehat{F}\in L^1({\mathbb R})$ we have
\[
F(t^{-1} \SH\,\,) =\frac{1}{2\pi}\int_{-\infty}^{+\infty}
  \widehat{F}(\tau) \cos(\tau\, t^{-1} \SH\,\,) \;d\tau.
  \]
 Thus from the above  we have
 \begin{equation}
 \label{e4.210}
 \supp K_{F(t^{-1}\!\SH\,\,)} \subseteq \D(t^{-1}r)
 \end{equation}
 whenever $\supp\widehat{F} \subseteq [-r,r]$.
This will be used in what follows.

Fixing an even function  $\vartheta\in C_c^{\infty}$ which is identically one on
$\{|s| \leq 1 \}$  and supported on $\{|s| \leq 2 \}$,   let us set  $j_0=[-\log_2\delta]-1$ and
$
\zeta_{j_0}(s):=\vartheta(2^{-j_0} s)$ and   $ \zeta_j(s):=\vartheta(2^{-j} s)-\vartheta(2^{-j+1} s)$ for $ j> j_0$. Then, we clearly have
\begin{eqnarray*}
\label{e4.23}
1\equiv \sum_{j\geq j_0 }\zeta_j(s), \ \ \ \ \forall s>0.
\end{eqnarray*}
Recalling that $\phi_{\delta}(s)={ \phi(\delta^{-1}(1-s^2) )}$,  for
 $ j\geq j_0$ we set
   \begin{eqnarray}\label{e4.24}
  \phi_{\delta,j}(s)={1\over 2\pi}  \int_{-\infty}^{\infty}\zeta_j(u)
  {\widehat{ \phi_{\delta} }}(u) \cos(s u) du.
  \end{eqnarray}
 By a routine computation  it can be verified that
    \begin{eqnarray}\label{e4.25}
  | \phi_{\delta,j}(s)|\leq
  \left\{
  \begin{array}{ll}
  \qquad C_N 2^{(j_0-j)N} ,  & |s|\in [1-2\delta,1+2\delta],\\[8pt]
   C_N  2^{j-j_0}  (1+ 2^j |s-1|)^{-N}, & \qquad {\rm otherwise},
  \end{array}
  \right.
  \end{eqnarray}
  for any $N$ and all  $j\geq j_0$   (see also   \cite[page 18]{C1}).
 By the Fourier inversion formula, we have
 \begin{eqnarray}\label{e4.26}
 \phi\left(\delta^{-1}\left(1-{s}^2 \right)\right)= \sum_{j\geq j_0}\phi_{\delta,j}(s), \ \ \ \ s>0.
  \end{eqnarray}
By the finite speed propagation property \eqref{e4.210}, we particularly have
\Be \label{e4.27}
  \supp K_{\phi_{\delta,j}(\!\sqrt{H}/t)}\subseteq \D(t^{-1}2^{j+1}) =\left\{(x,y)\in \RN\times \RN:  \ |x-y|\leq 2^{j+1}/t\right\}.
\Ee

Now from \eqref{e4.10}, it follows that
	 \Be \label{chen:e1}
 | \mathfrak S^{h}_{\delta}f(x)|^2
 \leq
  5\sum_{k\geq 1-{\rm log}_2 \sqrt{\delta}}  \int_{2^{k-1}}^{2^{k+2}}
  \Big|\phi\left(\delta^{-1}\big(1-{t^{-2}{H}}\big) \right)\varphi_k(\!\sqrt{H}\,)f(x)\Big|^2
  {dt\over t}.
 \Ee
  For $k\geq 1-{\rm log}_2 \sqrt{\delta}$ and $j\geq j_0$, let us set
 \[ E^{k, j}(t):=
  \Big\langle  \Big|\phi_{\delta,j}\big({t^{-1} \!\! \sqrt H}\,\big)
  \varphi_k(\!\sqrt{H}\,)f(x)\Big|^2, \  (1+|x|)^{-\alpha}\Big\rangle .\]
Using  the above inequality~\eqref{chen:e1},  \eqref{e4.26}, and Minkowski's inequality,  we have
 \Be \label{e4.28}
  \int_{\RN} | \mathfrak S^{h}_{\delta}f(x)|^2 (1+|x|)^{-\alpha} dx
  \le
  C\!\!\sum_{k\geq 1-{\rm log}_2 \sqrt{\delta}} \Big( \sum_{j\geq j_0}   \Big( \int_{2^{k-1}}^{2^{k+2}}
 E^{k, j}(t){dt\over t} \Big)^{1/2}\,\Big)^{2}.
 \Ee

In order to make use of the localization property \eqref{e4.27} of the kernel, we need to decompose $\mathbb R^n$
into disjoint cubes of side length  $2^{j-k+2}$.
For a given $k\in{\mathbb Z}, j\geq j_0$, and $\mathbf m=(\mathbf m_1, \cdots, \mathbf m_n)\in \mathbb Z^n,$ let us set
 $$
Q_{\mathbf m}= \Big[ 2^{j-k+2}\big(\mathbf m_1 -{1\over 2}\big), \ 2^{j-k+2}\big(\mathbf m_1 +{1\over 2}\big)\!\!\Big)  \times
 \cdots \times \Big[ 2^{j-k+2}\big(\mathbf m_n -{1\over 2}\big), \ 2^{j-k+2}\big(\mathbf m_n +{1\over 2}\big)\!\!\Big),
 $$
which are disjoint dyadic cubes centered at  $2^{j-k+2}\mm$ with side length  $2^{j-k+2}$. Clearly,
 $\RN=\cup_{\mathbf m\in\mathbb Z^n}Q_{\mathbf m}$.  For each $\mathbf m$, we define  $\widetilde{Q_{\mathbf m}}$ by setting
 $$\widetilde{Q_{\mathbf m}}:=
\bigcup_{{\mm'}\in \mathbb Z^n:\ {\rm dist}\, (Q_{\mm'}, Q_{\mathbf m})\leq  \sqrt n 2^{j-k+3}  } Q_{\mm'},
$$
and denote
$$
\mathbf M_0:= \big\{\mathbf m\in \mathbb Z^n: Q_0\cap \widetilde{Q_{\mathbf m}} \not= \emptyset \big\}.
$$
For $t\in[ 2^{k-1}, 2^{k+2}]$
it follows by \eqref{e4.27}   that  $ \bchi_{Q_{\mathbf m}}  \phi_{\delta,j}\big({t^{-1}   \sqrt H}\big)
 \bchi_{Q_{\mm'}}=0$ if  $\widetilde{Q_{\mathbf m}}\cap Q_{\mm'}=\emptyset$   for every $j,k$. Hence, it is clear that
 \begin{eqnarray*}
  \phi_{\delta,j}\big({t^{-1}   \sqrt H}\big)
   \varphi_k(\!\sqrt{H}\,)f
    =
  \!\!\! \sum_{\mm, \mm': \,  {\rm dist}\, (Q_{\mm}, Q_{\mm'}) <t^{-1}2^{j+2}}    \!\!\!   \!\!\!   \bchi_{Q_{\mathbf m}}
   \phi_{\delta,j}\big({t^{-1}   \sqrt H}\big) \bchi_{Q_{\mm'}}
   \varphi_k(\!\sqrt{H}\,)f,
    \end{eqnarray*}
 which gives
 \Be \label{e4.30}
 E^{k, j}(t)  \leq    C \sum_m
   \Big\langle
  \big|  \bchi_{ Q_{\mathbf m}}
   \phi_{\delta,j}\big({t^{-1}  \sqrt H}\big)   \bchi_{\widetilde{{Q}_\mm}} \varphi_k(\!\sqrt{H}\,)  f(x)
 \big|^2, \,    (1+|x|)^{-\alpha}  \Big\rangle.
  \Ee
 To exploit orthogonality generated by the disjointness of spectral support, we further decompose $\phi_{\delta,j}$
  which  is not compactly supported. We choose an even
  function $\theta\in C^\infty_c(-4, 4) $
 such that $\theta(s)=1$  for $s\in (-2, 2)$. Set
 \begin{equation}\label{e4.31}
 { \psi}_{0, \delta}(s):=\theta(\delta^{-1}(1-s)),  \qquad
 { \psi}_{\ell, \delta}(s):=\theta(2^{-\ell}\delta^{-1}(1-s)) - \theta(2^{-\ell+1}\delta^{-1}(1-s))
\end{equation}
 for all $\ell\geq 1$ such that $1=\sum_{\ell=0}^{\infty} { \psi}_{\ell, \delta}(s)$ and
 $
  \phi_{\delta,j}(s)=  \sum_{\ell=0}^{\infty}  \big ( { \psi}_{\ell, \delta}\phi_{\delta,j}\big)(s)
  $ for all $s>0.
 $
We put it into  \eqref{e4.30} to write
   \begin{eqnarray}\label{e4.32}\hspace{0.5cm}
 \Big( \int_{2^{k-1}}^{2^{k+2}}
 E^{k, j}(t){dt\over t}\Big)^\frac12
 &\leq & \sum_{\ell=0}^{\infty}
  \Big(  \sum_{\mathbf m}  \int_{2^{k-1}}^{2^{k+2}}  E^{k, j,\,\ell}_\mm(t)
   {dt\over t}\Big)^{1/2},
  \end{eqnarray}
  where
  \[  E^{k, j,\,\ell}_\mm(t) =  \Big\langle
  \big|  \bchi_{ Q_{\mathbf m}}
 \big ( { \psi}_{\ell, \delta}\phi_{\delta,j}\big)\big({t^{-1} \!\! \sqrt H}\big)
 \bchi_{\widetilde {Q_{\mathbf m}}} \varphi_k(\!\sqrt{H}\,)  f(x)\big|^2, \,   (1+ |x|)^{-\alpha}  \Big\rangle.\]

Recalling \eqref{e-Ii} and \eqref{e-eta},  we observe that,  for every $t\in I_i$,
it is possible that  $
{\psi}_{\ell, \delta} \left({s/t}\right)\eta_{i'} (s)\not=0$ only when  $i-2^{\ell+6}\leq i'\leq i+2^{\ell+6}.$
Hence,
$$ ( { \psi}_{\ell, \delta}\phi_{\delta,j})({t^{-1}   \sqrt H})
 =   \sum_{i'=i-2^{\ell+6}}^{i+2^{\ell+6}}
 ({ \psi}_{\ell, \delta}\phi_{\delta,j})({t^{-1}   \sqrt H})
 \eta_{i'}  (\!\sqrt{H}\,), \ \ \ \  t\in I_i.
 $$
  From this and Cauchy-Schwarz's inequality  we have
 \[
  E^{k, j,\,\ell}_\mm(t) \le C 2^\ell    \sum_{i'=i-2^{\ell+6}}^{i+2^{\ell+6}}  E^{k, j,\,\ell}_{\mm, i'}(t)
   \]
 for $t\in I_i$ where
 \[  E^{k, j,\,\ell}_{\mm, i'}(t):=\Big\langle
  \big|  \bchi_{ Q_{\mathbf m}}
 \big( { \psi}_{\ell, \delta}\phi_{\delta,j}\big)\big({t^{-1}   \sqrt H}\big) \eta_{i'}  (\!\sqrt{H}\,)
\big[\bchi_{\widetilde {Q_{\mathbf m}}} \varphi_k(\!\sqrt{H}\,)  f\big](x)\big|^2, \,    (1+|x|)^{-\alpha}  \Big\rangle.
\]
 Combining this with \eqref{e4.32}, we get
\Be\label{chen:e2}
\Big( \int_{2^{k-1}}^{2^{k+2}}
 E^{k, j}(t){dt\over t}\Big)^\frac12\le C  \sum_{\ell=0}^{ \infty} 2^{\ell/2}\Big( \sum_{\mm }    \sum_{i}  \int_{I_i}
  \sum_{i'=i-2^{\ell+6}}^{i+2^{\ell+6}}
  E^{k, j,\,\ell}_{\mm, i'}(t){dt\over t}\Big)^{1/2}.
  \Ee
  To continue, we
distinguish  two cases:  $j> k$;  and $j\leq k$.  In the latter case the associated cubes have side length $\le 4$ so that
the weight $(1+|x|)^\alpha$  behaves like a constant on each cube $Q_\mm$, so the desired estimate is easier to obtain.
The first case is more involved and we need to distinguish several cases which we separately handle.

\newcommand{\cI}{\mathcal I}

\subsubsection{\bf Case  $j> k$.} From the above inequality \eqref{chen:e2}   we now have
  \begin{eqnarray}\label{mmmmm}
\Big( \int_{2^{k-1}}^{2^{k+2}}
 E^{k, j}(t){dt\over t}\Big)^\frac12\le
  I_1(j,k) + I_2(j,k) + I_3(j,k),
  \end{eqnarray}
  where
 \begin{align}
  I_1(j,k)&:= \sum_{\ell=0}^{  [-{\rm log_2{\delta}}]-3} 2^{\ell/2}\Big(   \sum_{\mm\in \mathbf M_0 }    \sum_{i}  \int_{I_i}
  \sum_{i'=i-2^{\ell+6}}^{i+2^{\ell+6}}
  E^{k, j,\,\ell}_{\mm, i'}(t){dt\over t}\Big)^{1/2}, \label{e.I1}
\\
  I_2(j,k) &:=   \sum_{\ell=  [-{\rm log_2{\delta}}]-2}^{\infty} 2^{\ell/2} \Big( \sum_{\mm\in \mathbf M_0 }    \sum_{i}  \int_{I_i}
  \sum_{i'=i-2^{\ell+6}}^{i+2^{\ell+6}}
  E^{k, j,\,\ell}_{\mm, i'}(t){dt\over t}\Big)^{1/2}, \label{e.I2}
  \\
     I_3(j,k) &:=  \sum_{\ell=  0}^{\infty} 2^{\ell/2}\Big( \sum_{\mm\not\in \mathbf M_0 }   \sum_{i}  \int_{I_i}
  \sum_{i'=i-2^{\ell+6}}^{i+2^{\ell+6}}
  E^{k, j,\,\ell}_{\mm, i'}(t){dt\over t}\Big)^{1/2}. \label{e.I3}
  \end{align}

We first consider the estimate for $I_1(j, k)$ which is the major one.
The estimates for $I_2(j, k)$, $I_3(j, k)$ are to be obtained similarly but easier.
In fact, concerning $I_3(j, k)$, the weight $(1+|x|)^{-\alpha}$ behave as if it were a constant, and
the bound on $I_2(j, k)$ is much smaller than what we need to show because of rapid decay of the associated multipliers.

\noindent
{\underline {\it Estimate of the term $I_1(j, k)$ }}.
We claim that, for any $N>0$,
  \Be \label{ecccc}
   I_1(j,k)
   \le C_N 2^{(j_0-j)N} \left(\delta A_{\alpha, n}^{\epsilon}(\delta)\right)^{1/2} \Big(\int_{\RN}
| \varphi_k(\!\sqrt{H}\,)  f (x)|^2(1+|x|)^{-\alpha}dx \Big)^{1/2}
 \Ee
where $A_{\alpha, n}^{\epsilon}(\delta)$ is defined in~\eqref{bbb}.

Let us first  consider the  case $n\geq 2$. For \eqref{ecccc}, it suffices  to show
\begin{eqnarray}\label{e4.36}\hspace{1cm}
 E^{k, j,\,\ell}_{\mm, i'}(t)
&\leq &
  C_N 2^{-  \ell N }   2^{(j_0-j)N}  2^{ k}  \delta
   \int_{\RN}\big| \eta_{i'}  (\!\sqrt{H}\,)
\big[\bchi_{\widetilde {Q_{\mathbf m}}} \varphi_k(\!\sqrt{H}\,)  f\big](x)\big|^2dx
\end{eqnarray}
for any $N>0$ while $t\in I_i$ being fixed and $i-2^{\ell+6}\leq i'\leq i+2^{\ell+6}.$
Indeed, since the supports of $\eta_i$ are boundedly overlapping, \eqref{e4.36} gives
\begin{equation}
\label{e4.37}  \sum_{i}  \int_{I_i}
  \sum_{i'=i-2^{\ell+6}}^{i+2^{\ell+6}}
  E^{k, j,\,\ell}_{\mm, i'}(t){dt\over t}
 \lesssim
 C_N 2^{-  \ell (N-1) }   2^{(j_0-j)N}  2^{ k}  \delta^2\big \|\bchi_{\widetilde {Q_{\mathbf m}}} \varphi_k(\!\sqrt{H}\,)  f  \big\|^2_{2} .
 \end{equation}
Recalling \eqref{e.I1}, we take summation over $\ell$ and $\mm\in \mathbf M_0$ to get
\[ I_1(j,k)
\le   C_N 2^{j_0\alpha/2} 2^{(j_0-j)(N-\alpha)/2}  2^{k(1-\alpha)/2}
\delta \,
 \Big(2^{(k-j)\alpha}\sum_{\mm\in \mathbf M_0} \big\|
 \bchi_{\widetilde {Q_{\mathbf m}}} \varphi_k(\!\sqrt{H}\,)  f  \big\|^2_{2} \Big)^{1/2}.
  \]
Since $j>k$ and $\mm\in \mathbf M_0$, we note that $(1+|x|)^{\alpha}\leq C2^{(j-k)\alpha}$  if $x\in Q_{\mathbf m}$.
It follows that
\begin{equation}
\label{e.compare} 2^{(k-j)\alpha}\sum_{\mm\in \mathbf M_0} \big\|
 \bchi_{\widetilde {Q_{\mathbf m}}} \varphi_k(\!\sqrt{H}\,)  f  \big\|^2_{2}
 \le C \int_{\RN} | \varphi_k(\!\sqrt{H}\,)  f (x)|^2(1+|x|)^{-\alpha}dx.
\end{equation}
Noting that  $j_0=[-\log_2 \delta]-1$
and $k\geq [- {1\over 2}\log_2 \delta  ]$, we obtain
 \begin{eqnarray*}
I_1(j,k)
 &\leq& C_N 2^{(j_0-j)(N-\alpha)/2} \delta^{3/4-\alpha/4}\left(\int_{\RN}
| \varphi_k(\!\sqrt{H}\,)  f (x)|^2(1+|x|)^{-\alpha}dx \right)^{1/2},
  \end{eqnarray*}
which clearly gives \eqref{ecccc} since $N>0$ is arbitrary.

We now proceed to prove \eqref{e4.36}.
Note  that  $ \supp { \psi}_{\ell, \delta} \subseteq (1-2^{\ell+2}\delta, 1+2^{\ell+2}\delta)$,
and so $\supp \left (\psi_{\ell, \delta}\phi_{\delta,j}\right)\left({\cdot/t}\right)\subset [t(1-2^{\ell+2}\delta), t(1+2^{\ell+2}\delta)]$.
Thus, setting $R=[t(1+2^{\ell+2}\delta)]$,
by  Lemma~\ref{le3.2} we get
\Be
\label{e.4Ejk}
E^{k, j,\,\ell}_{\mm, i'}(t)
\le
R \|({  \psi}_{\ell, \delta}\phi_{\delta,j})\big(R \cdot/t\big)\|^2_{R^2,2} \int_{\RN}\left| \eta_{i'}  (\!\sqrt{H}\,)
\big[\bchi_{\widetilde {Q_{\mathbf m}}} \varphi_k(\!\sqrt{H}\,)  f\big](x)\right|^2dx
\Ee
for $0\leq \ell\leq [-{\rm log_2{\delta}}]-3$.
We note that the support of
$
({  \psi}_{\ell, \delta}\phi_{\delta,j})\big(R \cdot/t\big)$ is contained in $\subset [R^{-1}{t(1-2^{\ell+2}\delta)},\,  R^{-1}{t(1+2^{\ell+2}\delta)}]
$
and  $R^2\delta\geq 1$. Thus, we get
\[
 \|({  \psi}_{\ell, \delta}\phi_{\delta,j})\big((1+2^{\ell+2}\delta) \cdot\big)\|_{R^2,2}
\leq C \|({  \psi}_{\ell, \delta}\phi_{\delta,j})\big((1+2^{\ell+2}\delta) \cdot\big)\|_\infty  \left({2^{\ell+3}\delta}\right)^{1/2}.
 \]
 On the other hand, if $\ell\geq 1$,   then  ${ \psi}_{\ell, \delta}(s)=0$  for $s\in (1-2^{\ell}\delta, 1+2^{\ell}\delta)$,
which together with~\eqref{e4.25} and   $j_0=[-\log_2\delta]-1$ shows that
  \begin{eqnarray}
  \label{chen:e3}
  \|{  \psi}_{\ell, \delta}\phi_{\delta,j}\big((1+2^{\ell+2}\delta) \cdot\big)\|_{L^\infty} \le  C_N
   2^{(j_0-j)N} 2^{- \ell N }, \quad  \ell\geq 0
\end{eqnarray}
 for any $N<\infty$ .  Since $R\sim 2^k$, combining these two estimates with \eqref{e.4Ejk}
we get  the desired~\eqref{e4.36}.

Now we prove \eqref{ecccc}  with $n=1$.  This case can be handled in the same manner as before, so we shall be brief.
The only difference is that we  use Lemma~\ref{le33.1} instead of Lemma~\ref{le3.2}. Indeed, by following the same
 argument in the above and using  Lemma~\ref{le33.1}, we get
\begin{eqnarray*}
 E^{k, j,\,\ell}_{\mm, i'}(t) &\leq &
  C_N 2^{-  \ell N }   2^{(j_0-j)N} \big(\delta 2^{\ell + k} \big)^{\alpha\over 1+\epsilon}
   \int_{\RN}\big| \eta_{i'}  (\!\sqrt{H}\,)
\big[\bchi_{\widetilde {Q_{\mathbf m}}} \varphi_k(\!\sqrt{H}\,)  f\big](x)\big|^2dx
\end{eqnarray*}
for any $N>0$. Once we have the above estimate, one can deduce  the estimate
~\eqref{ecccc} without difficulty.

\noindent
{\underline {\it Estimate of the term $I_2(j, k)$}}.
As is clear in the decomposition of $E^{k, j},$ the term $I_2(j, k)$ is a tail part and we can obtain an estimate
 which is stronger than we need to have.  In fact,
we  show
\begin{eqnarray}
\label{e.est-I2}
   I_2(j,k)
   &\leq& C_N 2^{(j_0-j)N} \delta^N \Big(\int_{\RN}
| \varphi_k(\!\sqrt{H}\,)  f (x)|^2(1+|x|)^{-\alpha}dx \Big)^{1/2}
  \end{eqnarray}
  for any $N>0$.
 Indeed, we clearly have
  \begin{eqnarray*}
E^{k, j,\,\ell}_{\mm, i'}(t)&\leq & \left\|
 \left ( { \psi}_{\ell, \delta}\phi_{\delta,j}\right)\left({t^{-1} \!\! \sqrt H}\right) \eta_{i'}  (\!\sqrt{H}\,)
\big[\bchi_{\widetilde {Q_{\mathbf m}}} \varphi_k(\!\sqrt{H}\,)  f\big]\right\|^2_2\\
&\leq& \|({  \psi}_{\ell, \delta}\phi_{\delta,j})\big( \cdot/t\big)\|^2_{\infty} \int_{\RN}\left| \eta_{i'}  (\!\sqrt{H}\,)
\big[\bchi_{\widetilde {Q_{\mathbf m}}} \varphi_k(\!\sqrt{H}\,)  f\big](x)\right|^2dx.
\end{eqnarray*}
From the definition of ${ \psi}_{\ell, \delta}$ and  \eqref{e4.25} we have
\[
\|({  \psi}_{\ell, \delta}\phi_{\delta,j})\big( \cdot/t\big)\|_{ \infty}\leq C_N  2^{j-j_0}(2^{j+\ell}\delta)^{-N}, \quad \ell\geq [-{\rm log_2{\delta}}]-2.
\]
Thus, it follows that
  \begin{eqnarray*}
E^{k, j,\,\ell}_{\mm, i'}(t)&\leq &  C_N 2^{2(j-j_0)}(2^{j+\ell}\delta)^{-2N}\int_{\RN}\left| \eta_{i'}  (\!\sqrt{H}\,)
\big[\bchi_{\widetilde {Q_{\mathbf m}}} \varphi_k(\!\sqrt{H}\,)  f\big](x)\right|^2dx.
\end{eqnarray*}
After putting  this in \eqref{e.I2} we take summation over  $\mm\in \mathbf M_0$ to obtain
 \begin{eqnarray*}
E_2(j,k)&\leq& C_N\delta^{1/2-N} 2^{(j-j_0)}2^{-Nj} 2^{(j-k)\alpha/2}
\\
  & & \qquad \times  \sum_{\ell=[-{\rm log_2{\delta}}]-2}^{\infty}
 2^{- \ell (N-2) } \Big(2^{(k-j)\alpha}  \sum_{\mm\in \mathbf M_0} \big\|
 \bchi_{\widetilde {Q_{\mathbf m}}} \varphi_k(\!\sqrt{H}\,)  f  \big\|^2_{2} \Big)^{1/2}.
  \end{eqnarray*}
As before we may use \eqref{e.compare} since  $j>k$. Since $j_0=[-\log_2 \delta]-1$
and $k\geq [- {1\over 2}\log_2 \delta  ]$, taking  sum over  $\ell$  we obtain \eqref{e.est-I2}.

\noindent
{\underline {\it Estimate of the term $I_3(j, k)$ }}. We now prove the estimate
\begin{eqnarray}
\label{e.est-I3}
   I_3(j,k)
    &\leq& C_N 2^{(j_0-j)N}   \delta^{1/2} \left(\int_{\RN}
| \varphi_k(\!\sqrt{H}\,)  f (x)|^2(1+|x|)^{-\alpha}dx \right)^{1/2}.
  \end{eqnarray}
We begin with making an observation  that
\begin{eqnarray}\label{e4.299}
 C^{-1} (1+|2^{j-k+2}\mm|) \leq 1+|x| \leq C (1+|2^{j-k+2}\mm|), \quad x\in Q_\mm
\end{eqnarray}
provided that $\mm\not\in \mathbf M_0$. Thanks to this observation  the estimates for  $E^{k, j,\,\ell}_{\mm, i'}(t)$
  is much simpler.  By \eqref{e4.299} it is clear that
	\begin{eqnarray*}
E^{k, j,\,\ell}_{\mm, i'}(t)  \leq  (1+|x_m|)^{-\al}  \left\|
 \left ( { \psi}_{\ell, \delta}\phi_{\delta,j}\right)\left({t^{-1} \!\! \sqrt H}\right)   \right\|_{2\to 2}^2
 \left\| \eta_{i'}  (\!\sqrt{H}\,)
\big[\bchi_{\widetilde {Q_{\mathbf m}}} \varphi_k(\!\sqrt{H}\,)  f\big]\right\|_2^2.
  \end{eqnarray*}
  Since  $\|
 ( { \psi}_{\ell, \delta}\phi_{\delta,j})({t^{-1} \!\! \sqrt H})   \|_{2\to 2}\le \| ( { \psi}_{\ell, \delta}\phi_{\delta,j})\|_\infty $,
  it follows from~\eqref{chen:e3} that we  have
  \[   E^{k, j,\,\ell}_{\mm, i'}(t)  \leq  C_N (1+|x_m|)^{-\al}  2^{2(j_0-j)N}2^{-2 \ell N}   \big\|
 \eta_{i'}  (\!\sqrt{H}\,)\big[\bchi_{\widetilde {Q_{\mathbf m}}} \varphi_k(\!\sqrt{H}\,)  f\big] \big\|_{2}^2.  \]
 Using this and disjointness of the spectral supports,  successively,  we get
\begin{eqnarray*}
& & \sum_{\mm\not\in \mathbf M_0 } \sum_{i}  \int_{I_i}
  \sum_{i'=i-2^{\ell+6}}^{i+2^{\ell+6}}
  E^{k, j,\,\ell}_{\mm, i'}(t){dt\over t}
  \\
  &\le&  C 2^{2(j_0-j)N}2^{-2 \ell N} \delta \sum_{\mm\not\in \mathbf M_0 }  (1+|x_m|)^{-\al}
   \big\| \bchi_{\widetilde {Q_{\mathbf m}}} \varphi_k(\!\sqrt{H}\,)  f\big] \big\|_{2}^2
  \\
   &\leq & C 2^{2(j_0-j)N}2^{-2 \ell N} \delta \sum_{m}
    \int_{{\widetilde {Q_{\mathbf m}}}}
 | \bchi_{\widetilde {Q_{\mathbf m}}} \varphi_k(\!\sqrt{H})  f(x)|^2 (1+|x|)^{-\al}dx
  \\
	 &\leq & C 2^{2(j_0-j)N}2^{-2 \ell N}  \delta \int_{\RN} |\varphi_k (\!\sqrt{H})f(x)|^2  (1+|x|)^{-\al}dx.
  \end{eqnarray*}
Finally, recalling \eqref{e.I3} and taking sum over $\ell$
 yields the estimate \eqref{e.est-I3}.

  Therefore,  recalling $\delta\leq \delta A_{\alpha, n}^{\epsilon}(\delta) $,  we combine  the  estimates
    \eqref{ecccc}, \eqref{e.est-I2}, and \eqref{e.est-I3}  with  \eqref{mmmmm}
 to   obtain
 \Be
  \label{e.final}
  \int_{2^{k-1}}^{2^{k+2}}
  E^{k, j}(t){dt\over t}
    \le C_N 2^{(j_0-j) N} \delta A_{\alpha, n}^{\epsilon}(\delta) \int_{\RN}
| \varphi_k(\!\sqrt{H}\,)  f (x)|^2\frac{dx} {(1+|x|)^{\alpha}}
  \Ee
for any $N>0$ if $j>k$.

\subsubsection{\bf Case 2:   $j\leq k$.} In this case,  the side length of each $Q_{\mathbf m}$ is less than $4$.
 Thus, \eqref{e4.299} holds for any $\mm\in \mathbb Z^n$.
Thus, the same argument in the proof  of \eqref{e.est-I3} works without modification.  Similarly as before,  we get
  \begin{eqnarray}
  \label{e.final1}
\int_{2^{k-1}}^{2^{k+2}}
 E^{k, j}(t){dt\over t}
    &\leq& C_N 2^{(j_0-j)N}   \delta \int_{\RN}
| \varphi_k(\!\sqrt{H}\,)  f (x)|^2 \frac{dx} {(1+|x|)^{\alpha}}
  \end{eqnarray}
for any $N>0$, which is stronger than \eqref{e.final}.

 \subsubsection{\bf Completion of the proof of  \eqref{et121}.}
Finally, we are in position to complete  the proof of  \eqref{et121}.  By the estimates \eqref{e.final} and \eqref{e.final1}
we now have the estimate \eqref{e.final} for any $j\ge j_0$ and $k$.  Putting  \eqref{e.final}  in the right hand side of \eqref{e4.28}
and then taking sum over $j$,   we obtain  	
\[
 \int_{\RN}| \mathfrak S^{h}_{\delta}f(x)|^2 (1+|x|)^{-\alpha}dx
 \le      C\delta A_{\alpha, n}^{\epsilon}(\delta)
         \int_{\RN} \sum_k| \varphi_k (\!\sqrt{H}\,)f(x)|^2 (1+|x|)^{-\al}dx.
     \]
    Using  Proposition~\ref{prop2.5}   we get the estimate \eqref{et121} and this completes the proof of Lemma~\ref{le4.2}.

 \begin{rem} Let us generalize the square functions  by setting
 \label{re:aboutdelta}
  \begin{align*}
  \mathfrak S_\tau f(x) & =: \Big( \int_{1/2}^{\tau} \Big|\phi\Big(\delta^{-1}\Big(1-{ {H}\over t^2} \Big)\Big)f(x)\Big|^2
   {dt\over t}  \Big)^{1/2},  \\
    \mathfrak S^\tau f(x)
   &=: \Big( \int_{\tau}^\infty \Big|\phi\Big(\delta^{-1}\Big(1-   { {H}\over t^2} \Big) \Big)f(x)\Big|^2
   {dt\over t}\Big)^{1/2}
\end{align*}
for $\tau\gg 1$. By  examining  the proofs in the above  one can obtain  the bounds
on $\mathfrak S_\tau$ and $\mathfrak S^\tau$ in the space  $L^2(\RN, \, (1+|x|)^{-\alpha})$.
In fact, it is not difficulty to see that,  for $n\ge 2$ and $\alpha>1$,
\begin{align*}
\left\|\mathfrak S_\tau\right\|_{L^2(\RN, \, (1+|x|)^{-\alpha}) \to L^2(\RN, \, (1+|x|)^{-\alpha})}
&\le  C\delta\tau^{\frac{\alpha+1}2}
\end{align*}
and
\begin{align*}
   \left\|\mathfrak S^\tau\right\|_{L^2(\RN, \, (1+|x|)^{-\alpha}) \to L^2(\RN, \, (1+|x|)^{-\alpha})}
 &\le   C\delta^{1-\frac{\alpha} 2}\tau^{\frac{1-\alpha}2}.
 \end{align*}
Optimization between these two estimates gives the choice $\tau=\delta^{-\frac12}$.
 \end{rem}

\section{Sharpness of summability indices}
\label{sec4}
\setcounter{equation}{0}

  In  this section  we show the summability index for a.e. convergence in Theorem  ~\ref{th1.1} and  that  for boundedness of
  $S_*^\lambda(H)$  on $L^2(\RN, (1+|x|)^{-\alpha})$  in Theorem  ~\ref{th1.2}
  are sharp  up to endpoint.  For the purpose we  only have to prove the following two propositions.

  \begin{prop}\label{nec-1}  Le  $n\ge 2$ and $2n/(n-1)<p<\infty$.   If
 $$\sup_{R>0} |S_R^{\lambda}(H) f|<\infty \ \ \ \  {\rm a.e }
 $$
 for all $f\in L^p(\mathbb R^n)$, then
	 we  have   $\lambda\geq  \lambda(p)/2$.
\end{prop}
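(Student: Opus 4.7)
The strategy follows the Nikishin--Maurey paradigm: upgrade the a.e.\ finiteness hypothesis into a quantitative weighted maximal inequality, then construct test functions showing this inequality fails for $\lambda<\lambda(p)/2$.

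\emph{Step 1: Nikishin--Maurey reduction.} For each $R>0$, $S_R^\lambda(H)$ maps $L^p(\RN)$ continuously into $L^q(\RN)$ for every $q$, since the defining sum involves only finitely many Hermite projections and each $P_k$ produces Schwartz-class output. By hypothesis, $S_*^\lambda(H)f<\infty$ a.e.\ for every $f\in L^p(\RN)$, so $S_*^\lambda(H)$ is continuous in measure from $L^p$ to measurable functions. The Nikishin--Maurey theorem (see e.g.\ Garc\'ia-Cuerva--Rubio de Francia, Ch.~VI) then yields a positive measurable weight $w$, strictly positive on a bounded set $E$ of positive measure, satisfying
\[
\|S_*^\lambda(H)f\|_{L^{p,\infty}(w\,dx)}\;\le\; C\|f\|_p \qquad\forall f\in L^p(\RN).
\]
Restricting to a subset on which $w\ge c_0>0$, this yields the restricted weak-type bound
\[
\bigl|\{x\in E:\,S_*^\lambda(H)f(x)>\tau\}\bigr|\;\le\; C\tau^{-p}\|f\|_p^p, \qquad f\in L^p(\RN),\ \tau>0.
\]

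\emph{Step 2: Test functions.} To rule out $\lambda<\lambda(p)/2$ I would construct, for each large integer $N$, a function $f_N\in L^p(\RN)$ and a radius $R_N\sim\sqrt{2N+n}$ such that, on a subset $F_N\subseteq E$ of measure bounded below independently of $N$, one has
\[
|S_{R_N}^\lambda(H)f_N(x)|\;\gtrsim\; N^{\lambda(p)/2-\lambda}\,\|f_N\|_p.
\]
The natural candidate is a Knapp-type wave packet concentrated on the Hermite spectral shell $\{k:\ |2k+n-R_N^2|\le c\}$ and spatially localized near a density point of $E$. Its $L^p$ norm is governed by the sharp Karadzhov restriction bound~\eqref{e6.1}, while the pointwise lower bound on $F_N$ comes from the sharpness of the Hermite trace lemma (Lemma~\ref{le3.1}). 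The exponent $N^{\lambda(p)/2}$ is precisely the improvement visible in Lemma~\ref{le3.1}---the $k^{-1/4}$ rate, half of the $k^{-1/2}$ predicted by a classical sphere restriction---which is the arithmetic origin of the factor $1/2$ improving $\lambda(p)$ to $\lambda(p)/2$. Inserting this lower bound into the restricted weak-type inequality from Step~1 with $\tau\sim N^{\lambda(p)/2-\lambda}\|f_N\|_p$ produces the uniform control $N^{\lambda(p)/2-\lambda}\lesssim 1$, forcing $\lambda\ge\lambda(p)/2$.

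\emph{Main obstacle.} The principal difficulty lies in constructing $f_N$: the spectral support must be narrow enough that the Bochner--Riesz factor $(1-(2k+n)/R_N^2)_+^\lambda$ stays bounded below on the support, yet coherent enough for the pointwise sum of Hermite projections at $F_N$ to reach the claimed size. This is precisely the balancing that makes the square function estimate of Proposition~\ref{prop4.1} sharp, so the extremal functions for the weighted trace estimate should, by duality or direct construction, be the right ones. A further subtlety is keeping $|F_N|$ bounded below uniformly in $N$; this requires anchoring the spatial localization at a density point of $E$ and controlling the oscillation of the Bochner--Riesz kernel in a neighborhood of that point, which is the technical core of the argument.
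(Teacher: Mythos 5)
Your Step 1 captures the right framework (the paper also invokes Nikishin--Maurey to produce a weighted maximal bound), but the exponent you write is incorrect: since $p\geq 2$ here, Theorem~\ref{Nikishin} gives $q=\min(p,2)=2$, so the resulting bound is a weak-$L^2(w\,dx)$ estimate, not weak-$L^p$. This slip is not fatal to the conclusion, but a more consequential omission is that the paper restricts to \emph{radial} functions before applying Nikishin--Maurey: since $S_R^\lambda(H)$ preserves radiality, one may regard it as an operator on $L^p([0,\infty),r^{n-1}dr)$, and the resulting weight lives on the half-line. This radial reduction is precisely what makes the subsequent test-function computation explicit and one-dimensional, and nothing in your plan replaces it.

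Your Step 2 is where the genuine gap lies, and you acknowledge as much in your ``main obstacle.'' The Knapp-type wave-packet construction is described only in spirit and is not carried out; for the Hermite operator the spectral shells are high-dimensional, and controlling the pointwise size of a coherent superposition of Hermite eigenfunctions on a set of uniformly positive measure inside $E$ is exactly what is hard. The paper bypasses this by an entirely different, concrete route. After the radial reduction, it uses the Weyl fractional derivative identity \eqref{e9.14} to convert the uniform bound on $S_R^\lambda(H)$ into a bound $\|\chi_{[2k,2k+1)}(H)f\|_{L^{2,\infty}(\widetilde w\,dx)}\leq Ck^\lambda\|f\|_{L^p}$, paying a factor $k^\lambda$. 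It then takes the explicit radial test function $f_k(r)=\operatorname{sign}(\mathfrak L_k^n(r))\,|\mathfrak L_k^n(r)|^{1/(p-1)}$; by Lemma~\ref{len6.1}, $P_{2k}$ applied to a radial function produces a multiple of the single Laguerre function $\mathfrak L_k^n$, so there is no phase-alignment issue to worry about. Two calculations then close the argument: Lemma~\ref{len7.1} shows (via the Laguerre asymptotics) that the weak-$L^2(\widetilde w)$ norm of $\mathfrak L_k^n(|\cdot|)$ is $\gtrsim D_k k^{-1/4}$, and Lemma~\ref{len7.3} gives $\|\mathfrak L_k^n\|_{L^{p'}(r^{n-1}dr)}\sim D_k k^{n(1/2-1/p)/2}$; comparing powers of $k$ forces $\lambda\geq n(1/2-1/p)/2-1/4=\lambda(p)/2$. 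So while you have correctly identified both the general strategy and the arithmetic origin of the factor $1/2$ (the $k^{-1/4}$ rate from the trace lemma), the operative mechanism---radial reduction, the Weyl derivative step, and the Laguerre test functions---is absent, and without it your Step 2 is a plan rather than a proof.
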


 Since we are assuming $\lambda\ge 0$,  Proposition \ref{nec-1} shows the summability index in Theorem \ref{th1.1} is sharp up to endpoint.

  \begin{prop}\label{nec-2} Let   $0\leq \alpha<n$.  Suppose that
\begin{eqnarray}\label{e7.1}
\sup_{R>0}\left\|S_R^{\lambda}(H)\right\|_{L^2\big(\RN, \, (1+|x|)^{-\alpha}) \to L^2(\RN, \, (1+|x|)^{-\alpha}\big)}  < \infty.
\end{eqnarray}
Then, we have $
\lambda \ge  \max \{0,\frac{\alpha-1}{4}\}.
$
\end{prop}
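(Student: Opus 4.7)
The plan is to prove $\lambda\ge(\alpha-1)/4$ (the bound $\lambda\ge 0$ is built into the hypothesis that the Bochner--Riesz means are defined) by testing the uniform bound on explicit families of Hermite functions and reading off the weighted norms via their WKB asymptotics. First I reduce to the one-dimensional Hermite operator $H_1=-\partial_{x_1}^2+x_1^2$. For $F\in\mathscr{S}(\mathbb R)$ the product test $f(x):=F(x_1)\prod_{i=2}^n h_0(x_i)$ satisfies $Hf=(H_1 F)\prod h_0+(n-1)f$, so with $R'^2:=R^2-(n-1)$,
\[
 S_R^\lambda(H) f = \Big(\frac{R'^2}{R^2}\Big)^{\lambda}\bigl(S_{R'}^\lambda(H_1) F\bigr)(x_1)\prod_{i\ge 2} h_0(x_i).
\]
Since each $h_0(x_i)^2$ forces $|x_i|\lesssim 1$, integrating out the transverse variables gives $\|f\|_{L^2(\mathbb R^n,(1+|x|)^{-\alpha})}\asymp\|F\|_{L^2(\mathbb R,(1+|x_1|)^{-\alpha})}$ with universal constants; together with $(R'^2/R^2)^{\lambda}\to 1$, the uniform boundedness of $S_R^\lambda(H)$ implies the uniform boundedness of $S_{R'}^\lambda(H_1)$ on $L^2(\mathbb R,(1+|x_1|)^{-\alpha})$, so it suffices to work in one dimension.

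In the 1D setting I use the WKB asymptotic $h_k(x)=\sqrt{2/\pi}\,(2k+1-x^2)^{-1/4}\cos(\phi_k(x)-\pi/4)+O(k^{-3/4})$ in $|x|\ll\sqrt{2k}$, with $\phi_k'(x)=\sqrt{2k+1-x^2}$ and $\partial_k\phi_k(x)=\pi/2+\arcsin(x/\sqrt{2k+1})$. Extracting the slowly oscillating part of $h_k h_{k+2m}$ yields, with $w(x)=(1+|x|)^{-\alpha}$ and $W=\int w\,dx$,
\[
\|h_k\|_{L^2(w)}^2\asymp\frac{W}{\pi\sqrt{2k}},\qquad \langle h_k,h_{k+2m}\rangle_{L^2(w)}\asymp\frac{(-1)^m}{\pi\sqrt{2k}}\hat w\Big(\frac{2m}{\sqrt{2k}}\Big).
\]
Equivalently, a Parseval-type identity gives $\bigl\|\sum_j a_j h_{k+2j}\bigr\|_{L^2(w)}^2 \asymp \frac{1}{2\pi}\int_{-\pi}^\pi w(\sqrt{2k}\,\theta/2)\,|B(\theta)|^2\, d\theta$, where $B(\theta)=\sum_j(-1)^j a_j e^{ij\theta}$.

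For $1<\alpha<3$ I would test $F_k:=h_k+h_{k+2}$, which corresponds to $B(\theta)=1-e^{i\theta}$ and $|B(\theta)|^2=4\sin^2(\theta/2)$. Changing variable $\theta=2\phi/\sqrt{2k}$ localizes the integral to $|\phi|\lesssim\sqrt{k}$, and the $\phi^{2-\alpha}$ divergence of $\int^{\sqrt k}(1+\phi)^{-\alpha}\phi^2\,d\phi$ (for $\alpha<3$) produces $\|F_k\|_{L^2(w)}^2\asymp k^{-\alpha/2}$. With $R^2=2(k+2)+1$ the multiplier at $h_{k+2}$ vanishes while $c_k:=(1-(2k+1)/R^2)^\lambda=(4/R^2)^\lambda\asymp k^{-\lambda}$, so $S_R^\lambda(H_1)F_k=c_k h_k$ and $\|S_R^\lambda(H_1)F_k\|_{L^2(w)}^2\asymp k^{-2\lambda-1/2}$; the uniform bound then forces $k^{-2\lambda-1/2}\lesssim k^{-\alpha/2}$, i.e.\ $\lambda\ge(\alpha-1)/4$. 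For $3\le\alpha<n$ I would take the longer test $F_k=\sum_{j=0}^M\binom{M}{j}h_{k+2j}$ with an integer $M>(\alpha-1)/2$, so that $B(\theta)=(1-e^{i\theta})^M$ vanishes to order $M$ at $\theta=0$; the same change of variables yields $\|F_k\|_{L^2(w)}^2\asymp k^{-\alpha/2}$, and with $R^2=2(k+2M)+1$ the weights $c_{k+2j}\propto(M-j)^\lambda$ do not preserve the vanishing, giving $\|S_R^\lambda(H_1)F_k\|_{L^2(w)}^2\asymp k^{-2\lambda-1/2}$ and the same conclusion.

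The hardest step is (i) carrying the WKB expansion precisely enough to isolate the slowly oscillating part of $h_k h_{k+2m}$ with uniform control up to a neighbourhood of the turning points $\pm\sqrt{2k}$ and to validate the Parseval-type identity; and (ii) for $\alpha\ge 3$, verifying the non-vanishing of the weighted moment $\sum_j(-1)^j\binom{M}{j}(M-j)^\lambda$, which requires avoiding the integer values of $\lambda$ below $M$, so that $\|S_R^\lambda(H_1)F_k\|_{L^2(w)}$ indeed achieves the expected lower bound $k^{-2\lambda-1/2}$.
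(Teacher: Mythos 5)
Your approach is genuinely different from the paper's and, at least for $1<\alpha<3$, it is a plausible alternative. The paper does not test the Bochner--Riesz operator directly; it first passes by duality to the growing weight $(1+|x|)^{\alpha}$, then uses the subordination formula \eqref{e9.14} with a bump $F=\eta(\cdot-k)$ to convert uniform boundedness of $S_R^\lambda(H)$ into the spectral-projection bound $\|\chi_{[k,k+1)}(H)\|_{L^2((1+|x|)^{\alpha})\to L^2((1+|x|)^{\alpha})}\le Ck^\lambda$, and finally tests this bound with the single eigenfunction $g_k=h_k(x_1)\prod_{i\ge2}h_0(x_i)$ (via $G_k=g_k(1+|x|)^{-\alpha}$) using only \emph{lower} bounds on $\int h_k^2(1+|x|)^{\pm\alpha}$ (Lemma~\ref{le7.1}) together with the anti-trace estimate $k^{\alpha/4}\|\chi_{[k,k+1)}(H)f\|_2\le C\|\chi_{[k,k+1)}(H)f\|_{L^2((1+|x|)^{\alpha})}$ (Lemma~\ref{le5.3}). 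This bypasses any cancellation between distinct Hermite functions and handles all $0\le\alpha<n$ in one stroke, which is exactly what your approach has to work hard to recover. What you gain is that no spectral-projection reformulation (and no duality step) is needed: one sees directly, via the $R^2=2(k+2M)+1$ choice, how the Bochner--Riesz multiplier kills the top mode and shrinks the rest by $k^{-\lambda}$, while the input norm degenerates to $k^{-\alpha/2}$ because $B(\theta)$ vanishes to order $M$ at $\theta=0$.

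The parts you flag as hard are indeed genuine gaps as written. For $\alpha\ge3$ (which matters when $n\ge4$) the relative accuracy of the WKB expansion \eqref{eee51} on $|x|\lesssim\sqrt k$ is only $O(k^{-1})$, whereas the cancellation you need in $\|F_k\|_{L^2(w)}^2$ is of relative size $k^{-(\alpha-1)/2}\ll k^{-1}$; a single error term of \eqref{eee51} is then too coarse and one must carry a full asymptotic expansion (or appeal to a sharper form of the Hilb-type asymptotics) with uniform control up to the turning point to justify the Parseval-type identity. Independently, the non-vanishing of $\sum_{j=0}^{M-1}(-1)^j\binom{M}{j}(M-j)^\lambda$ fails whenever $\lambda$ is an integer $\le M-1$ (it is then an $M$-th difference of a polynomial of degree $<M$), so the output lower bound $\|S_R^\lambda(H_1)F_k\|_{L^2(w)}^2\gtrsim k^{-2\lambda-1/2}$ is not established at those $\lambda$. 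This can be repaired — e.g.\ by noting that, via the same subordination identity \eqref{e9.14}, uniform boundedness at a summability index $\lambda_1$ implies it at every $\lambda>\lambda_1$, so one may run the argument at a nearby non-integer $\lambda_0\in(\lambda_1,(\alpha-1)/4)$ and derive a contradiction — but this extra step is needed and not in your write-up. For $1<\alpha<3$, where $M=1$ and the only leading coefficient is $1$, the proposal is clean modulo the WKB bookkeeping.
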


This clearly implies the necessity  part of Theorem~\ref{th1.2}  because
\[
\sup_{R>0}\left\|S_R^{\lambda}(H)f\right\|_{L^2(\mathbb R^n,(1+|x|)^{-\al})}
\le \|\sup_{R>0}|S_R^{\lambda}(H)f|\|_{L^2(\mathbb R^n,(1+|x|)^{-\al})}.
\]

 \subsection{Proof of Proposition \ref{nec-1}}
  To prove  Proposition \ref{nec-1} we use  a consequence of the  Nikishin-Maurey theorem (see for example, \cite {de,  GR,  St2}).
  The following can be deduced from  Proposition 1.4  and  Corollary 2.7  in \cite[Ch. VI]{GR}.

\begin{thm}  \label{Nikishin}
Let  $1\leq p<\infty$ and $(X, \mu)$   be a  $\sigma$-finite measure space.
Suppose that  $\{T_m\}_{m\in{\mathbb N}}$
is a sequence of  linear   operators   continuous from $L^p(X, \mu)$ to $L^0(X, \mu)$ of  all measurable
functions    on $(X, \mu)$ (with the topology of convergence in measure).   Assume that
$
T^{\ast}f= \sup_m |T_mf|<\infty
$
a.e.  whenever $f\in L^p(X, \mu)$, then  there exists  a measurable  function  $w>0$   a.e.  such that
$$
\int_{ \{x: |T^{\ast}f(x)|>\alpha \} } w(x)d\nu(x) \leq  C \left( \|f\|_p\over \alpha\right)^q, \quad  \alpha>0
$$
for all $f\in L^p(X, \mu)$ with $q=\min(p,2)$.
\end{thm}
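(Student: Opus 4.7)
The plan is to deduce the theorem from two classical ingredients of abstract measure theory: the Banach continuity principle for maximal operators, and the Nikishin--Maurey factorization theorem for sublinear maps into $L^0$. First I would promote the pointwise a.e.\ finiteness of $T^{\ast} = \sup_m |T_m|$ to a quantitative uniform continuity: there is a nonincreasing function $\phi:(0,\infty)\to(0,\infty)$ with $\lim_{\lambda\to\infty}\phi(\lambda)=0$ satisfying
\begin{equation*}
\nu\big(\{x: |T^{\ast}f(x)| > \lambda\|f\|_p\}\big)\le \phi(\lambda), \qquad f\in L^p(X,\mu).
\end{equation*}
This is the Banach principle: the sets $A_k=\{f\in L^p: \nu(\{|T^{\ast}f|>k^{-1}\})\le k\}$ are closed (using $L^p$-to-$L^0$ continuity of each $T_m$ together with Fatou applied to the supremum) and cover $L^p$ by assumption, so Baire category forces some $A_k$ to contain a ball; homogeneity and sublinearity of $T^{\ast}$ then yield the stated $\phi$.

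The core of the argument is the Nikishin--Maurey factorization: any sublinear $S:L^p(X,\mu)\to L^0(X,\mu)$ continuous in the above sense admits a weight $w>0$ a.e.\ and a constant $C$ with
\begin{equation*}
\int_{\{|Sf|>\alpha\}}w\,d\nu \le C\left(\frac{\|f\|_p}{\alpha}\right)^{\!q}, \qquad q=\min(p,2),\quad \alpha>0.
\end{equation*}
I would construct $w$ by $\sigma$-finite exhaustion: decompose $X=\bigcup_j X_j$ with $\nu(X_j)<\infty$, and on each $X_j$ extract a maximal admissible pair $(E,v)$--where $v>0$ is supported on $E\subset X_j$ and the displayed weighted weak-type inequality holds on $E$ with fixed constant--via a Zorn-style maximization. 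The quantitative continuity of $S$ combined with maximality forces $E$ to equal $X_j$ modulo null sets, for otherwise a small admissible piece could be appended on the complement to contradict maximality. Normalizing and pasting the local weights produces a global $w>0$ a.e.\ on $X$.

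The genuine obstacle is to carry out the extraction with the sharp exponent $q=\min(p,2)$ rather than merely $q=1$. For $p\le 2$, where $q=p$, this falls out of Chebyshev's inequality together with a convexity argument on weights. For $p>2$, where $q=2$, the sharp exponent is a manifestation of \emph{cotype two}: one tests on Rademacher sums $f=\sum_k \varepsilon_k f_k$ and applies Khintchine's inequality to convert the uniform $L^p$-to-$L^0$ bound for $Sf$ into a square-function estimate $\big(\sum_k|Sf_k|^2\big)^{1/2}$, which upon dualization via Hahn--Banach separation produces the weight $w$ and the weighted weak-type $(p,2)$ inequality. Taking $S=T^{\ast}$ in the resulting factorization gives exactly the statement of the theorem; the two components are precisely Proposition~1.4 (the Banach principle formulation) and Corollary~2.7 (the Nikishin--Maurey factorization with sharp exponent) of \cite[Ch.~VI]{GR}.
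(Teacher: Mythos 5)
Your proposal is correct and follows exactly the route the paper takes: the paper offers no proof of this theorem beyond the remark that it "can be deduced from Proposition 1.4 and Corollary 2.7 in \cite[Ch. VI]{GR}," and you reduce the statement to precisely those two results (the Banach continuity principle and the Nikishin--Maurey factorization with exponent $q=\min(p,2)$), additionally sketching their standard proofs. Nothing further is needed.
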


In the remaining part of this subsection  we mainly work with radial functions. For a given function $g$ on $[0,\infty)$, we set $g_{rad}(x):=g(|x|)$.
Clearly, $g_{rad}\in  L^p(\RN)$ if and only if $g\in
 L^p([0,\infty),r^{n-1}dr)$. For the   proof of Proposition~\ref{nec-1}  we first show Proposition~\ref{th6.1}.

\begin{prop}\label{th6.1} Let   $ 2\leq p<\infty$ and
$w$ be a measurable function on $(0, \infty)$ with $w> 0$ almost everywhere.   Suppose that
\begin{eqnarray}\label{ne7.1}
\sup_{R>0}\sup_{\alpha>0}\alpha\left(\int_{ \big\{x\in \RN: \, |S_R^{\lambda}(H)f(x)|>\alpha \big\} } w(|x|)dx \right)^{1/2}  \le C\|f\|_{L^{p}(\RN)}
\end{eqnarray}
for all radial functions $f\in L^p(\RN)$. Then, we have $
\lambda \ge n(1/2-1/p)/2-1/4.
$
\end{prop}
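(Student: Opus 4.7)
The strategy is to disprove the weighted weak-type estimate when $\lambda<\lambda(p)/2$ by constructing extremal radial test functions and exploiting the one-dimensional radial Laguerre structure. First, I would use the almost-everywhere positivity of $w$ to reduce to a fixed set: standard measure-theoretic arguments produce a bounded spherical shell $A=\{a\le |x|\le b\}$, a subset $E\subset A$ of positive Lebesgue measure, and a constant $\epsilon>0$ such that $w(x)\ge\epsilon$ on $E$. On such $E$ the weighted weak-type hypothesis reduces, up to a multiplicative constant, to a weak-type estimate from $L^p_{\mathrm{rad}}(\RN)$ into $L^{2,\infty}(E)$ with an absolute constant, so that the issue becomes concentration of $|S_R^\lambda(H) f|$ on a fixed bounded shell.

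Next, I would exploit that on radial functions the Hermite projection $P_k$ reduces to a rank-one projection onto the radial Laguerre eigenfunction $\psi_k(x)=c_k L_k^{n/2-1}(|x|^2)e^{-|x|^2/2}$, normalised so that $\|\psi_k\|_{L^2(\RN)}=1$ with eigenvalue $4k+n$. Hence on radial $f=\sum_k c_k\psi_k$ the operator $S_R^\lambda(H)$ acts as a multiplier in $k$. The plan is to test the hypothesis against Rademacher-randomised superpositions
\[
f_N(x)=\sum_{k\in I_N}\epsilon_k\,\psi_k(x)
\]
over a narrow spectral band $I_N$ just below $N$ (with width $K=K(N)$ to be optimised) and to take $R_N^2=4N+n$ so that the multipliers $\mu_k=(1-(4k+n)/R_N^2)_+^\lambda$ range across the band carrying the $\lambda$-dependence explicitly. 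Khintchine's inequality then controls the typical $L^p$-norm of $f_N$ by $\|(\sum_{k\in I_N}|\psi_k|^2)^{1/2}\|_p$, while a Paley-Zygmund-type argument yields, on a positive-measure subset of $E$, a pointwise lower bound of order $(\sum_{k\in I_N}\mu_k^2|\psi_k(x)|^2)^{1/2}$ for $|S_{R_N}^\lambda(H) f_N|$.

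Both quantities can then be evaluated using the classical Plancherel-Rotach/Askey-Wainger asymptotics for radial Laguerre functions, namely $|\psi_k(r)|\sim k^{-1/4}r^{-(n-1)/2}$ in the oscillatory bulk region $r\ll 2\sqrt k$ with transitional Airy behaviour near the turning point $r\sim 2\sqrt k$. For $p>2n/(n-1)$ the bulk contribution to the $L^p$-integral converges, producing sharp asymptotics for $\|f_N\|_p$, and an analogous computation with the multipliers inserted yields a $\lambda$-dependent lower bound for the left-hand side of the weak-type inequality on $E$. Plugging into the hypothesis and optimising in $K=K(N)$, one is led to an inequality of the form $N^{\lambda(p)/2-\lambda}\le C$ uniformly in $N$, which forces $\lambda\ge\lambda(p)/2=n(1/2-1/p)/2-1/4$ as desired.

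The main obstacle is the careful asymptotic bookkeeping of $f_N$ and $S_{R_N}^\lambda(H) f_N$ on the fixed shell $E$: several regimes (bulk, turning-point/Airy, and near-origin) contribute to the $L^p$-norm of the randomised sum once $p$ exceeds the Stein-Tomas threshold $2n/(n-1)$, and the $\lambda$-dependence has to survive the cancellation between $\|f_N\|_p$ and the pointwise size of $S_{R_N}^\lambda(H) f_N$. A second subtlety is that only $w>0$ almost everywhere is assumed, so the test functions must be kept physically localised inside the fixed annulus $A$ where $w$ has a uniform positive lower bound; one cannot use extremisers whose mass drifts to infinity with $N$ (such as Laguerre functions evaluated near their turning point $r\sim 2\sqrt N$). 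Balancing these two constraints is what pins down the sharp index $\lambda(p)/2$.
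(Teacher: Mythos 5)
Your proposal takes a genuinely different route from the paper, and it runs into a concrete obstruction that the paper's argument is specifically designed to avoid.

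The paper first converts the hypothesis on $S_R^{\lambda}(H)$ into a weighted weak-type bound on the single spectral projection $\chi_{[2k,2k+1)}(H)=P_{2k}$ by means of the Weyl fractional-derivative representation
\[
F(H)=\frac{1}{\Gamma(\lambda+1)}\int_0^{\infty} F^{(\lambda+1)}(R)\,R^{\lambda}\,S_{\sqrt R}^{\lambda}(H)\,dR,
\]
applied to a bump $F=\eta(\cdot-2k)$; the factor $k^{\lambda}$ falls out of $\int|\eta^{(\lambda+1)}(s-2k)|\,s^{\lambda}\,ds$. The projection bound $\|P_{2k}f\|_{L^{2,\infty}(\widetilde w)}\le C k^{\lambda}\|f\|_p$ is then tested not against a Laguerre mode itself, but against the \emph{dual extremizer} $f_k=\operatorname{sign}(\mathfrak L^n_k)\,|\mathfrak L^n_k|^{1/(p-1)}$, for which $P_{2k}f_k$ is a multiple of $\mathfrak L^n_k$ with coefficient $\|\mathfrak L^n_k\|_{p'}^{p'}$ and $\|f_k\|_p=\|\mathfrak L^n_k\|_{p'}^{p'-1}$. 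Since $p'\le 2$, only the Markett--Muckenhoupt bound $\|\mathfrak L^n_k\|_{L^{p'}(r^{n-1}dr)}\sim D_k k^{n(1/p'-1/2)/2}$ is needed, which sees only the bulk oscillatory region; the turning-point and exponential-decay regions never enter. Combined with the lower bound $\|\mathfrak L^n_k\|_{L^{2,\infty}(w)}\gtrsim D_k k^{-1/4}$ on the fixed shell $[1/2,1]$, a single mode already produces the sharp exponent, with no band, no randomization, and no optimization.

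Your scheme instead tests $S_{R_N}^{\lambda}(H)$ directly against Rademacher sums $f_N=\sum_{k\in I_N}\epsilon_k\psi_k$ of $L^2$-normalized Laguerre modes. The difficulty is exactly the one you flag in your last paragraph, but it is fatal rather than merely a bookkeeping burden: for $p>2$ the quantity $\|f_N\|_p\approx\|(\sum_{k\in I_N}|\psi_k|^2)^{1/2}\|_p$ (and likewise the single-mode version $\|\psi_N\|_p$) is dominated by the turning-point/Airy region near $|x|\sim 2\sqrt N$ and not by the oscillatory bulk where your lower bound on the fixed annulus $E$ lives. Concretely, for $n=2$ and large $p$ one finds $\|\psi_N\|_p\sim N^{-1/6+1/(3p)}$, whereas the paper's argument effectively uses the rate $N^{-(1/2-1/p)}$; since the former decays more slowly, the resulting inequality from the weak-type hypothesis yields only $\lambda\ge 1/6-1/(3p)-1/4<0$, a vacuous bound. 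Optimizing the band width $K$ cannot repair this, since the turning-point region of every $\psi_k$ with $k\in I_N$ sits near $|x|\sim2\sqrt N$ and contributes to $\|f_N\|_p$ regardless of where you put your lower bound. In short, you cannot afford to carry the $L^p$ size of a raw Laguerre mode (or randomized sum thereof) on the right-hand side; the paper circumvents this precisely by replacing the mode with its $L^p$--$L^{p'}$ dual optimizer, so that only $\|\mathfrak L^n_k\|_{p'}$ with $p'\le 2$ appears. Your Paley--Zygmund step for a positive-measure lower bound in weak-$L^2$ is a second, less serious, gap that would also need careful treatment.

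One more minor observation: the paper's two supporting ingredients you would still need — the $w>0$ a.e.\ argument yielding a set of positive measure with $w\ge c_0$, and the lower bound for $|\mathfrak L^n_k|$ on a subset of $[1/2,1]$ of measure bounded below — do appear in your sketch in essentially the same form (Lemma~\ref{len7.1}), so that part of your plan is aligned with the paper.
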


\begin{proof}[Proof  of Proposition~\ref{nec-1}]
Let $R_m$ be any enumeration of the rational numbers in $(0,\infty)$. Then we have $S_*^{\lambda}(H)f(x)
 =\sup_{m}|S_{R_m}^{\lambda}(H)f(x)|$, which follows since,  for every $x$, $S_{R}^{\lambda}(H)f(x)$ is right continuous in $R$, $0\leq R<\infty$.

We now restrict the operator $S_{R_m}^{\lambda}(H)$ to the set  of radial functions. Since  $S_R^{\lambda}(H)f$
  is  a radial function if  $f$ is radial, we may view  $S_{R_m}^{\lambda}(H)$  as  an operator from $L^p([0,\infty),r^{n-1}dr)$ to itself.
  \footnote{Though uniform boundedness of $S^{\lambda}_R(H)$ remains open,   $S^{\lambda}_R(H)$ is clearly  bounded on $L^p$
   since there are finitely many Hermite functions appearing in  $S^{\lambda}_R(H)$.}   More precisely,
  let us denote  by $T_m$ the mapping
  \[ g \mapsto S_{R_m}^{\lambda}(H)g_{rad} \]
for $g\in  L^p([0,\infty),r^{n-1}dr)$.  Since $\sup_{R>0} |S_R^{\lambda}(H) f|<\infty $  a.e.  for all $f\in L^p(\mathbb R^n)$
by assumption, we clearly have
$$
\sup_{m}|T_m g(r)|<\infty,\,\,a.e.\,\,r\in[0,\infty).
$$
for all $g\in  L^p([0,\infty),r^{n-1}dr)$. Now   we take $(X,\mu) =([0,\infty),r^{n-1}dr)$.
Clearly, each operator  $T_m$ is  continuous $L^p(X,\mu)$ to itself  and so is  $T_m$ from $L^p(X,\mu)$
 into $L^0(X,\mu)$.  Then it follows from Theorem~\ref{Nikishin} that  there exists a weight function $ w>0$ such that
 $g\mapsto \sup_{m}|T_m g|$ is bounded from $L^p([0,\infty),r^{n-1}dr)$ to
  $L^{2,\infty}([0,\infty), w(r)r^{n-1}dr)$. So, we get  a weight $w$ which satisfies \eqref{ne7.1}   for all radial functions $f\in L^p(\RN)$ because
  $S_*^{\lambda}(H)f(x)
 =\sup_{m}|S_{R_m}^{\lambda}(H)f(x)|$ for any $f\in L^p(\RN)$. Therefore, by Proposition~\ref{th6.1},    we   conclude  that $\lambda \ge   n(1/2-1/p)/2-1/4$.
 \end{proof}

It remains to prove Proposition~\ref{th6.1}. For this we make use of estimates for
 the Hermite and  Laguerre functions.
Recall that  the Laguerre polynomials of type $\alpha$ are defined by the formula (see \cite[1.1.37]{Th3}):
$$
e^{-x} x^{\alpha} L_k^{\alpha}(x) ={1\over k!} {d^k\over d x^k} \left( e^{-x} x^{k+\alpha}\right), \quad \alpha>-1.
$$
Define
\Be
\label{laguerre}
{\mathscr L}_k^\alpha(x)=\left(\frac{\Gamma(k+1)}{\Gamma(k+\alpha+1)}\right)^{\frac{1}{2}}e^{-\frac{x}{2}}x^{\frac{\alpha}{2}}L_k^\alpha(x).
\Ee
The functions $\{{\mathscr L}_k^\alpha\}$ form an orthonormal family in $L^2({\mathbb R}_+, dx)$ where ${\mathbb R}_+=(0, \infty)$.
Recall that $P_k$ is the Hermite spectral projection operator defined by  \eqref{e1.5} and set
\[\mathfrak L_k^n(r):=L_k^{n/2-1}(r^2)e^{-\frac{1}{2}r^2}.\]

\begin{lemma}
 \cite[Corollary 3.4.1]{Th3}
\label{len6.1}
If  $f(x)=f_0(|x|)$ on $\RN$, then $P_{2k+1}(f)=0$ and
$$
P_{2k}(f)=R_k^{n/2-1}(f_0)\mathfrak L_k^n(r),
$$
where
$$
R_k^{n/2-1}(f)= \frac{2\Gamma(k+1)}{\Gamma(k+n/2)}\int_0^\infty f(r)\mathfrak L_k^n(r)r^{n-1}dr.
$$
\end{lemma}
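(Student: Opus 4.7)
The plan is to exploit the $SO(n)$-invariance of $H=-\Delta+|x|^2$ to restrict $P_k$ to the subspace of radial functions, to identify the radial $L^2$-eigenfunctions of $H$ explicitly via the Laguerre differential equation, and then to compute the projection coefficient by the standard orthogonality formula. Since $H$ commutes with rotations, $P_k$ maps radial functions to radial functions; so if $f=f_0(|x|)$, then $P_k f$ is radial and lies in the radial part of the eigenspace $\{H=2k+n\}$.

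The first step is to compute the radial restriction of $H$ and verify that $\mathfrak L_k^n$ is an eigenfunction. On a radial function $u(r)$ one has $Hu=-u''(r)-\tfrac{n-1}{r}u'(r)+r^2 u(r)$. Writing $u(r)=v(r^2)e^{-r^2/2}$ and simplifying with the change of variable $s=r^2$, a direct computation gives
\begin{equation*}
H u=-4\,e^{-r^2/2}\bigl[s v''(s)+(n/2-s)v'(s)\bigr]+n\,u.
\end{equation*}
Choosing $v=L_k^{n/2-1}$ and invoking the Laguerre equation $sv''+(\alpha+1-s)v'+kv=0$ with $\alpha=n/2-1$ collapses the bracket to $-kL_k^{n/2-1}(s)$, so $H\mathfrak L_k^n=(4k+n)\mathfrak L_k^n$. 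Thus $\mathfrak L_k^n$ is a radial eigenfunction attached to the even level $2k$, since $4k+n=2(2k)+n$.

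The second step is uniqueness. The radial restriction of $H$ is a Sturm--Liouville operator on $L^2((0,\infty),r^{n-1}dr)$; under the substitution $s=r^2$ this is unitarily equivalent to the standard Laguerre operator on $L^2(\mathbb R_+,ds)$ with parameter $\alpha=n/2-1$, whose spectrum is precisely $\{4k+n:k\ge 0\}$ and whose eigenspaces are one-dimensional, spanned by the $\mathscr L_k^{n/2-1}$. In particular no radial eigenfunction exists at odd levels, so $P_{2k+1}f=0$ whenever $f$ is radial, and $P_{2k}f$ must be a scalar multiple of $\mathfrak L_k^n$.

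The third step is to pin down the scalar. By orthogonality,
\begin{equation*}
P_{2k}f=\frac{\langle f,\mathfrak L_k^n\rangle_{L^2(\mathbb R^n)}}{\|\mathfrak L_k^n\|_{L^2(\mathbb R^n)}^2}\,\mathfrak L_k^n.
\end{equation*}
Polar coordinates give $\langle f,\mathfrak L_k^n\rangle=\tfrac{2\pi^{n/2}}{\Gamma(n/2)}\int_0^\infty f_0(r)\mathfrak L_k^n(r)r^{n-1}\,dr$, and the substitution $s=r^2$ together with the classical Laguerre orthogonality $\int_0^\infty (L_k^\alpha(s))^2 e^{-s}s^\alpha\,ds=\Gamma(k+\alpha+1)/k!$ yield
\begin{equation*}
\|\mathfrak L_k^n\|_{L^2(\mathbb R^n)}^2=\frac{\pi^{n/2}}{\Gamma(n/2)}\cdot\frac{\Gamma(k+n/2)}{k!}.
\end{equation*}
Taking the ratio produces exactly the prefactor $2\Gamma(k+1)/\Gamma(k+n/2)$ appearing in $R_k^{n/2-1}(f_0)$, as required.

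The main obstacle is the uniqueness statement in the second step: it is not transparent from the Hermite tensor basis $\{\Phi_\mu\}_{|\mu|=m}$ alone, since a priori radial projections could combine many $\Phi_\mu$ in nontrivial ways, in particular at odd levels. The cleanest resolution is to observe that the radial subspace of $L^2(\mathbb R^n)$ is a reducing subspace for $H$ on which $H$ acts as the explicit Laguerre-type operator computed above; by the completeness of $\{\mathscr L_k^{n/2-1}\}_{k\ge 0}$ in $L^2(\mathbb R_+,ds)$, this radial subspace decomposes as $\bigoplus_{k\ge 0}\mathbb C\,\mathfrak L_k^n$, which simultaneously forces $P_m$ to annihilate radial functions whenever $m$ is odd and identifies the radial part of each even-level eigenspace as one-dimensional.
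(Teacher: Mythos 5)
Your proof is correct, and it is worth noting that the paper offers no proof of this lemma at all: it is quoted verbatim as \cite[Corollary 3.4.1]{Th3}, where Thangavelu derives it from the explicit formula for the Hermite projection kernel (via special Hermite functions and Mehler--Hille--Hardy type identities) by integrating over spheres. Your route is genuinely different and more self-contained: you verify directly that $\mathfrak L_k^n$ solves $H\mathfrak L_k^n=(4k+n)\mathfrak L_k^n$ via the Laguerre ODE (your computation of the radial part of $H$ under $u=v(r^2)e^{-r^2/2}$ checks out), and then you get both the vanishing of $P_{2k+1}$ on radial functions and the one-dimensionality of the radial part of each even eigenspace from the completeness of $\{\mathscr L_k^{n/2-1}\}$ in $L^2(\mathbb R_+,ds)$, rather than from any kernel formula. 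The normalization also checks: $\|\mathfrak L_k^n\|_{L^2(\RN)}^2=\frac{\omega_{n-1}}{2}\int_0^\infty [L_k^{n/2-1}(s)]^2e^{-s}s^{n/2-1}ds=\frac{\pi^{n/2}}{\Gamma(n/2)}\frac{\Gamma(k+n/2)}{k!}$ with $\omega_{n-1}=2\pi^{n/2}/\Gamma(n/2)$, and the ratio with $\langle f,\mathfrak L_k^n\rangle$ reproduces the factor $2\Gamma(k+1)/\Gamma(k+n/2)$. The only points you lean on implicitly are standard: $P_m$ commutes with rotations because $H$ does, and $\mathfrak L_k^n$ is Schwartz, so the formal eigenfunction identity is a genuine $L^2$ spectral statement giving $P_m\mathfrak L_k^n=\delta_{m,2k}\mathfrak L_k^n$. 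What your approach buys is independence from the machinery of special Hermite expansions; what the reference's approach buys is that it comes for free once the projection kernel has been computed for other purposes.
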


\begin{lemma}
\cite[(i) in Lemma 1.5.4]{Th3}
\label{len6.2}
Let $\alpha+\beta>-1$, $\alpha>-2/q,$ and $1\leq q\leq 2$. Then, if  $k$ is large enough,  for $\beta<2/{q}-1/{2}$ we have
$$
\|{\mathscr L}_k^{\alpha+\beta}(x)x^{-\beta/2}\|_{L^q({\mathbb R}_+)}\sim k^{\frac{1}{q}-\frac{1}{2}-\frac{\beta}{2}}.
$$
\end{lemma}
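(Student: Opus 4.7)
The plan is to establish matching upper and lower bounds via the classical four-region analysis for Laguerre functions, using the standard pointwise asymptotics of the orthonormal Laguerre functions $\mathscr{L}_k^\alpha$ due to Muckenhoupt and Askey--Wainger. Unfolding the definition \eqref{laguerre}, one has
\[
\mathscr{L}_k^{\alpha+\beta}(x)\,x^{-\beta/2}
=\Bigl(\tfrac{\Gamma(k+1)}{\Gamma(k+\alpha+\beta+1)}\Bigr)^{\!1/2}e^{-x/2}x^{\alpha/2}L_k^{\alpha+\beta}(x),
\]
so the extra factor $x^{-\beta/2}$ merely shifts the weight in front of $L_k^{\alpha+\beta}$. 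The condition $\alpha+\beta>-1$ is what makes the normalizing Gamma ratio satisfy $\bigl(\Gamma(k+1)/\Gamma(k+\alpha+\beta+1)\bigr)^{1/2}\sim k^{-(\alpha+\beta)/2}$ by Stirling, and places us in the standard framework for Laguerre orthogonality.

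For the upper bound, set $\nu:=4k+2(\alpha+\beta)+2\sim k$ and decompose $(0,\infty)=I\cup II\cup III\cup IV$ with $I=(0,1/\nu]$, $II=(1/\nu,\nu/2]$, $III=(\nu/2,3\nu/2]$, and $IV=(3\nu/2,\infty)$. On these regions the classical pointwise bounds take the form
\[
|\mathscr{L}_k^{\alpha+\beta}(x)|\le C\begin{cases}(x\nu)^{(\alpha+\beta)/2}, & x\in I,\\ (x\nu)^{-1/4}, & x\in II,\\ \nu^{-1/4}(\nu^{1/3}+|\nu-x|)^{-1/4}, & x\in III,\\ e^{-cx}, & x\in IV.\end{cases}
\]
A direct computation, multiplying by $x^{-\beta/2}$ and integrating in $L^q$, gives the contributions $k^{\beta/2-1/q}$ from $I$, $k^{1/q-1/2-\beta/2}$ from each of $II$ and $III$, and an exponentially small contribution from $IV$. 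The hypothesis $\alpha>-2/q$ is precisely the condition ensuring $\int_0^{1/\nu}x^{q\alpha/2}dx$ converges, while $\beta<2/q-1/2$ plays a double role: it forces $\int_{1/\nu}^{\nu/2}x^{-q/4-q\beta/2}dx$ to be dominated by its upper endpoint (yielding Region $II$'s stated magnitude), and it makes the Region $I$ bound no larger than the Region $II$ bound, since $\beta/2-1/q\le 1/q-1/2-\beta/2$ reduces to exactly $\beta\le 2/q-1/2$. Summing the regions gives the upper bound $\|\mathscr{L}_k^{\alpha+\beta}(\cdot)(\cdot)^{-\beta/2}\|_{L^q}\le Ck^{1/q-1/2-\beta/2}$.

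The matching lower bound is the main technical step, since pointwise bounds alone cannot preclude cancellation in $L^q$. To handle it, I would fix an interval $[c_1\nu,c_2\nu]\subset II$ with $0<c_1<c_2<1/2$ and invoke the Plancherel--Rotach / Hilb-type asymptotic for Laguerre polynomials, which yields on this interval
\[
e^{-x/2}x^{(\alpha+\beta)/2}L_k^{\alpha+\beta}(x)=A(x,k)\cos\!\bigl(2\sqrt{\nu x}+\varphi(x,k)\bigr)+\text{lower order},
\]
with $|A(x,k)|\sim k^{(\alpha+\beta)/2-1/4}x^{-1/4}$ and a phase whose derivative is bounded below. Partitioning the interval into many subintervals on each of which the phase sweeps a full period and integrating $|\cos|^q$ produces a lower bound $\gtrsim k^{1/q-1/2-\beta/2}$ for the $L^q$ norm of $\mathscr{L}_k^{\alpha+\beta}(x)x^{-\beta/2}$ after the normalizing ratio $\sim k^{-(\alpha+\beta)/2}$ is incorporated. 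Combined with the upper bound above this yields the $\sim$ asymptotic, completing the proof.
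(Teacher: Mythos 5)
Your proof is correct. Note first that the paper does not prove this lemma at all: it is quoted verbatim from Thangavelu's book (Lemma 1.5.4(i) of \cite{Th3}), so there is no in-paper argument to compare against. Your argument is essentially the standard proof from the cited source: the identity $\mathscr{L}_k^{\alpha+\beta}(x)x^{-\beta/2}=\bigl(\Gamma(k+1)/\Gamma(k+\alpha+\beta+1)\bigr)^{1/2}e^{-x/2}x^{\alpha/2}L_k^{\alpha+\beta}(x)$, the Askey--Wainger/Muckenhoupt four-region pointwise bounds for the upper estimate, and the Plancherel--Rotach oscillatory asymptotics on a bulk interval $[c_1\nu,c_2\nu]$ for the matching lower bound. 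Your bookkeeping of the hypotheses is right: $\alpha>-2/q$ gives integrability of $x^{q\alpha/2}$ at the origin, and $\beta<2/q-1/2$ both makes the Region $II$ integral endpoint-dominated at $x\sim\nu$ and keeps Region $I$ subordinate; Regions $II$ and $III$ each contribute $k^{1/q-1/2-\beta/2}$ (and the Airy subregion $|x-\nu|\lesssim \nu^{1/3}$ is harmless for $q\le 2$). The only imprecision is the phase you wrote in the bulk asymptotic: for $x\sim\nu$ the correct Plancherel--Rotach phase is $\tfrac{\nu}{4}(2\phi-\sin 2\phi)$ with $x=\nu\cos^2\phi$ rather than $2\sqrt{\nu x}$ (the latter is the Hilb/Bessel-regime phase valid for $x=o(\nu)$); but since both phases have derivative comparable to $1$ on $[c_1\nu,c_2\nu]$, your equidistribution argument for $\int|\cos|^q$ goes through unchanged, and the stated lower bound $k^{1/q-1/2-\beta/2}$ follows.
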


\begin{lemma}\label{len7.1} Let $w$ be a measurable function on $(0, \infty)$   with $w> 0$ almost everywhere.
 If  $k\in{\mathbb N}$ is large enough, we have
\begin{align}\label{ne7.00}
\sup_{\beta>0}\beta\Big(\int_{ \big\{x\in \RN: \, |\mathfrak L_k^n(|x|)|>\beta \big\} } w(|x|)dx \Big)^{1/2}\geq  C_1D_k k^{-1/4}
\end{align}
with  a constant $C_1>0$ independent of $k$ where $D_k=\left({\Gamma(k+n/2)}/{\Gamma(k+1)}\right)^{1/2}$.
\[\]
\end{lemma}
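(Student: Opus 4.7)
The plan is to locate a fixed compact annulus $K\subset\RN$ on which $|\mathfrak L_k^n(|\cdot|)|$ is of order $D_k k^{-1/4}$ on a subset of definite Lebesgue measure uniformly in $k$, and then to use the a.e.\ positivity of $w$ to convert this into the required weak-$L^2(w)$ lower bound.

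First, unwinding the definition \eqref{laguerre} gives the identity $\mathfrak L_k^n(r)=D_k\,r^{1-n/2}\,\mathscr L_k^{n/2-1}(r^2)$. I would fix $K=\{x\in\RN:1\le|x|\le 2\}$ so that the Laguerre variable $u=r^2\in[1,4]$ remains in a compact subset of $(0,\infty)$. On such a compact set one has the classical oscillatory Plancherel--Rotach/Hilb-type asymptotic
\[
\mathscr L_k^{n/2-1}(u)=\pi^{-1/2}u^{-1/4}k^{-1/4}\cos\bigl(2\sqrt{ku}-\tfrac{(n-1)\pi}{4}\bigr)+O(k^{-3/4})
\]
uniformly for $u\in[1,4]$ as $k\to\infty$ (see e.g.\ \cite[Ch.~1]{Th3} or classical Laguerre theory). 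Transporting through the identity above produces the matched bounds
\[
\|\mathfrak L_k^n(|\cdot|)\|_{L^\infty(K)}\le C_\infty D_k k^{-1/4},\qquad \|\mathfrak L_k^n(|\cdot|)\|_{L^2(K)}^2\ge c_2 D_k^2 k^{-1/2},
\]
for all sufficiently large $k$; the lower $L^2$-bound arises because $\cos^2$ averages to $1/2$ as its phase sweeps over many wavelengths in $r\in[1,2]$.

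Next, a reverse Chebyshev argument converts the ratio of these two norms into a density estimate. Setting $\eta:=c_2/(C_\infty^2|K|)$ and applying $\int_K|f|^2\le\beta^2|K|+\|f\|_{L^\infty(K)}^2\,|\{|f|>\beta\}|$ with $\beta=\eta^{1/2}\|f\|_{L^\infty(K)}/2$ shows that $E_k:=\{x\in K:|\mathfrak L_k^n(|x|)|\ge\kappa D_k k^{-1/4}\}$ satisfies $|E_k|\ge(3\eta/4)|K|$, where $\kappa=\tfrac12\sqrt{c_2/|K|}$; importantly, $\eta$ and $\kappa$ are independent of $k$. Finally, since $w>0$ a.e.\ on the bounded set $K$, continuity of measure yields $\varepsilon=\varepsilon(w)>0$ with $|\{x\in K:w(|x|)<\varepsilon\}|<(3\eta/8)|K|$; intersecting with $E_k$ gives a set of Lebesgue measure at least $(3\eta/8)|K|$ on which simultaneously $w(|x|)\ge\varepsilon$ and $|\mathfrak L_k^n(|x|)|\ge\kappa D_k k^{-1/4}$. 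Choosing $\beta=\kappa D_k k^{-1/4}$ in \eqref{ne7.00} then delivers the inequality with $C_1=\kappa\sqrt{3\varepsilon\eta|K|/8}$.

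The principal technical input is the local oscillatory asymptotic for $\mathscr L_k^{n/2-1}$ on a fixed compact subset of $(0,\infty)$; Lemma \ref{len6.2} of the paper gives only global $L^q$ bounds and does not pin down behavior on $K$, so this classical Szeg\"o-type expansion is the one external ingredient one must invoke. Once the matched $L^\infty$ and $L^2$ bounds on $K$ are in hand, the rest of the argument is routine: a reverse-Chebyshev concentration step together with the elementary observation that an a.e.-positive weight on a bounded set is bounded below by a positive constant off an arbitrarily small exceptional set.
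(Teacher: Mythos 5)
Your proposal is correct and reaches the same conclusion as the paper, but by a slightly different path at the key concentration step. Both arguments begin from an oscillatory Hilb/Plancherel--Rotach-type asymptotic for $\mathscr L_k^{n/2-1}$ on a fixed compact subset of $(0,\infty)$, both reduce the lemma to showing that $|\mathfrak L_k^n(|\cdot|)|\gtrsim D_k k^{-1/4}$ on a subset of a fixed annulus of Lebesgue measure bounded below uniformly in $k$, and both finish by intersecting with a set where the a.e.\ positive weight is bounded below. The difference is how the uniformly positive measure of $\{|\mathfrak L_k^n|\gtrsim D_k k^{-1/4}\}$ is obtained: the paper argues directly on the cosine in the asymptotic, counting the (roughly $\sqrt\nu$-many) subintervals of $[1/2,1]$ where $|\cos(\sqrt\nu\, r-\alpha\pi/2-\pi/4)|\ge\sqrt2/2$, while you avoid any counting and instead compare a pointwise upper bound $\|\mathfrak L_k^n\|_{L^\infty(K)}\lesssim D_k k^{-1/4}$ against an averaged lower bound $\|\mathfrak L_k^n\|_{L^2(K)}^2\gtrsim D_k^2 k^{-1/2}$ (the latter coming from $\cos^2$ averaging to $1/2$) and extract the density estimate by a reverse Chebyshev inequality. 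Your route is softer and arguably cleaner, at the modest cost that the $L^2$ lower bound itself still needs the oscillatory asymptotic as input, so neither proof is really lighter on inputs. Two small points worth flagging: (i) you work on the annulus $1\le |x|\le 2$, so the Laguerre argument $u=r^2$ lies in $[1,4]$, which sits outside the domain $1/\nu\le r\le1$ of the Muckenhoupt formula the paper cites in \eqref{asymp}; the asymptotic you invoke on a fixed compact subset of $(0,\infty)$ is classical, but you should cite a version stated for that region (Szeg\H o, or the analogous inner-region formula in Muckenhoupt) rather than \eqref{asymp} itself -- alternatively, working on $1/2\le|x|\le1$ as the paper does sidesteps the issue; and (ii) the explicit formula $\kappa=\tfrac12\sqrt{c_2/|K|}$ you give does not quite follow from your choice $\beta=\eta^{1/2}\|f\|_\infty/2$ (one also needs the lower bound $\|f\|_\infty\ge\sqrt{c_2/|K|}\,D_kk^{-1/4}$ from the $L^2$ estimate, yielding $\kappa\gtrsim c_2/(C_\infty|K|)$), but this is only an arithmetic slip and does not affect the validity of the argument.
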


To prove Lemma~\ref{len7.1},
we make use of  the following asymptotic property of the Laguerre functions (see \cite[7.4, p.453]{Mu}):
\Be
\label{asymp}
{\mathscr L}_k^{\alpha}(r)= \frac{(2/\pi)^{1/2}}{(\nu r)^{1/4}}
\left(\cos\left((\nu r)^{1/2}-\frac{\alpha\pi}{2}-\frac{\pi}{4}\right)+O(\nu^{-1/2}r^{-1/2})\right),
\Ee
where $\nu=4k+2\alpha+2$ and  $1/\nu\leq r\leq 1$.


\begin{proof}
Let us set
\[  E(k):=\left\{r \in \big[\frac{1}{2},1 \big]:\left|\cos\left(\sqrt \nu r-\frac{\alpha\pi}{2}-\frac{\pi}{4}\right)\right|\geq \frac{\sqrt 2}2\right\}.\]
We claim that there exists a constant $C_\ast>0$, independent of $k$, such that
\begin{eqnarray}\label{ne7.000}
|E(k)|\geq C_\ast.
\end{eqnarray}
Assuming this for the moment, we proceed to show \eqref{ne7.00}.  By \eqref{laguerre} and \eqref{asymp} we have
$$
|\mathfrak L_k^n(r)|=|{\mathscr L}_k^{n/2-1}(r^2)r^{-(n/2-1)}D_k|\geq CD_kk^{-1/4}
$$
for all $r\in E(k)$.
On the other hand, since $w(r)> 0$ a.e. $r\in [1/2,1]$, there exists a subset $F$ of $[1/2,1]$
and a constant $c_0>0$ such that $|F|>1/2-C_\ast/2$ and $w(r)\geq c_0$  for all $r\in F$.
Then, we have $|E(k)\cap F|\geq |E(k)|+|F|-|[1/2,1]|\geq C_\ast/2$ and
\[
 \sup_{\beta>0} \beta^2\int_{\big\{x\in \RN:\, |\mathfrak L_k^n(|x|)|>\beta \big\} } w(|x|)dx
\ge
 \sup_{\beta>0} \beta^2\int_{ \big\{E(k)\cap F:\, |\mathfrak L_k^n(r)|>\beta  \big\} } w(r)r^{n-1}dr.
\]
Since $w(r)\geq c_0$ for $r\in F$, the  left hand side of the above is bounded below by
\[  c_0(1/2)^{n-1}\sup_{\beta>0} \beta^2\int_{ \big\{E(k)\cap F: \,|\mathfrak L_k^n(r)|>\beta \big\} } dr.  \]
Particularly, taking $\beta=CD_kk^{-1/4}/2$ and using the fact that  $ |\mathfrak L_k^n(r)|\geq CD_kk^{-1/4}$ for $r\in E(k)$ and  $|E(k)\cap F|\geq C_\ast/2$,
we see
\begin{eqnarray*}
\sup_{\beta>0} \beta^2\int_{\big\{x\in \RN:\, |\mathfrak L_k^n(|x|)|>\beta \big\} } w(|x|)dx
&\ge& c_0(1/2)^{n-1}(CD_kk^{-1/4}/2)^2|E(k)\cap F|\\
&\geq& c_0 (1/2)^{n+2} C^2 C_\ast D_k^2k^{-1/2},
\end{eqnarray*}
which implies that \eqref{ne7.00} holds for $C_1=c_0 (1/2)^{n+2} C^2C_\ast$.

It now remains to prove \eqref{ne7.000}, which is rather obvious. However, we include a proof for the convenience of the reader.
Note that if there exists $m\in \NN$ such that
$$
2m\pi-\frac{\pi}{4}\leq \nu^{1/2}r-\frac{\alpha\pi}{2}-\frac{\pi}{4}\leq 2m\pi+\frac{\pi}{4},
$$
that is,
$
\nu^{-\frac12} (2m\pi+\alpha\pi/2)\le r\le \nu^{-\frac12}(2m\pi+(\alpha+1)\pi/2),
$
then $\cos(\nu^{1/2}r-\frac{\alpha\pi}{2}-\frac{\pi}{4})\geq \sqrt 2/2$. 
 Then there are at  least $[\frac{\sqrt \nu}{4\pi}]$ intervals $\nu^{-\frac12}[ (2m\pi+\alpha\pi/2),
 (2m\pi+\alpha\pi/2)+ 2\pi ]$  and so $[\frac{\sqrt \nu}{4\pi}]$ intervals $\nu^{-\frac12}[ (2m\pi+\alpha\pi/2),
 (2m\pi+(\alpha+1)\pi/2)]$ included in $[1/2,1]$. Thus,
$$
|E(k)|\geq \Big[\frac{\sqrt \nu}{4\pi}\Big]\frac{\pi}{2\sqrt v}\geq 1/16.
$$
So,   \eqref{ne7.000} holds for $C_\ast=1/16$. This completes the proof of Lemma~\ref{len7.1}.
\end{proof}

\begin{lem}\label{len7.3}
Let $1\leq q\leq 2$. Then we have the estimate
\begin{eqnarray}\label{ne7.13}
\|\mathfrak L^n_k\|_{L^{q}([0, \infty), \, r^{n-1}dr) }\sim D_k k^{n(1/q-1/2)/2}.
\end{eqnarray}
\end{lem}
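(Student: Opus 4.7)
The plan is to reduce the estimate to a weighted $L^q$ estimate for the standard Laguerre function ${\mathscr L}_k^{n/2-1}$ of \eqref{laguerre}, and then apply Lemma~\ref{len6.2}. Solving \eqref{laguerre} for $L_k^{n/2-1}$ with $\alpha=n/2-1$ yields
\[
\mathfrak L_k^n(r)=L_k^{n/2-1}(r^2)e^{-r^2/2}=D_k\,r^{-(n-2)/2}\,{\mathscr L}_k^{n/2-1}(r^2).
\]
Substituting $x=r^2$ in the $L^q([0,\infty),r^{n-1}dr)$ norm, the measure $r^{n-1}dr$ together with the factor $r^{-(n-2)q/2}$ produces a single power $x^{(n/2-1)(1-q/2)}$, so a direct computation gives
\[
\|\mathfrak L_k^n\|_{L^q([0,\infty),\,r^{n-1}dr)}^{q}=\tfrac{D_k^{q}}{2}\int_0^\infty\bigl|{\mathscr L}_k^{n/2-1}(x)\bigr|^{q}x^{(n/2-1)(1-q/2)}\,dx.
\]

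Next, I would match the weight $x^{(n/2-1)(1-q/2)}$ on the right-hand side to the form $x^{-\beta q/2}$ appearing in Lemma~\ref{len6.2}. This forces $\beta=(n/2-1)(1-2/q)$, and setting $\alpha_0=(n-2)/q$ gives $\alpha_0+\beta=n/2-1$, which is the correct superscript on ${\mathscr L}_k^{\alpha_0+\beta}$. One now checks the admissibility of Lemma~\ref{len6.2}: the condition $\alpha_0+\beta>-1$ reduces to $n>0$; the condition $\alpha_0>-2/q$ holds since $\alpha_0=(n-2)/q\ge -1/q$ for $n\ge1$; and $\beta<2/q-1/2$ holds for $1\le q\le 2$ and $n\ge1$ (trivially when $\beta\le 0$, and by direct substitution in the residual $n=1$ case, where $\beta=1/q-1/2$).

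With these parameters Lemma~\ref{len6.2} yields
\[
\int_0^\infty\bigl|{\mathscr L}_k^{n/2-1}(x)\bigr|^{q}x^{-\beta q/2}\,dx\sim k^{q(1/q-1/2-\beta/2)}.
\]
Substituting $\beta=(n/2-1)(1-2/q)$ into the exponent and simplifying, the expression collapses to $n(2-q)/4$. Taking $q$-th roots and restoring the prefactor $D_k$ then gives $\|\mathfrak L_k^n\|_{L^q(r^{n-1}dr)}\sim D_k\,k^{n(2-q)/(4q)}=D_k\,k^{n(1/q-1/2)/2}$, as claimed.

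The step that requires the most care is the bookkeeping: verifying that the chosen $\beta$ lies in the admissible range of Lemma~\ref{len6.2} across the full parameter range $1\le q\le 2$, $n\ge 1$, and simplifying the exponent $q(1/q-1/2-\beta/2)$ without algebraic slip so that it matches $n(2-q)/4$. Everything else is a direct change of variables and an application of the cited lemma.
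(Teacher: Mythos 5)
Your argument is correct and is essentially the same as the paper's: rewrite $\mathfrak L_k^n$ in terms of the normalized Laguerre function ${\mathscr L}_k^{n/2-1}$, change variables $x=r^2$, and then apply Lemma~\ref{len6.2} with $\alpha=(n-2)/q$ and $\beta=(n/2-1)(1-2/q)$, which matches the paper's choice verbatim. The only difference is cosmetic: you spell out the change of variables and verify the admissibility hypotheses of Lemma~\ref{len6.2}, which the paper leaves implicit.
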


\begin{proof}  By \eqref{laguerre} we note that
$
|\mathfrak L^n_k(r)|
=D_k |{\mathscr L}_k^{n/2-1}(r^2)r^{-(n/2-1)}|.
$
We take $\alpha=2(n/2-1)/q$ and $\beta=2(1/2-1/q)(n/2-1)$ in   Lemma~\ref{len6.2} to obtain
\begin{align*}
\int_0^\infty|\mathfrak L_k^n(r)|^q r^{n-1}dr
&= D_k^q \int_0^\infty |{\mathscr L}_k^{n/2-1}(r^2)r^{-(n/2-1)}|^q r^{n-1}dr\\
&\sim D_k^pk^{\frac{nq}{2}(\frac{1}{q}-\frac{1}{2})}.
\end{align*}
 The proof of    Lemma~\ref{len7.3} is complete.
\end{proof}

In order to prove Proposition~\ref{th6.1},
we use  the
 distributions $\chi_-^\nu$   (see \cite{Ho1}) which is defined by
\begin{eqnarray}\label{e9.11}
\chi_-^{\nu}=\frac{x_-^{\nu}}{\Gamma({\nu}+1)},\ \ \ \   {\rm Re}\, {\nu}>-1,
\end{eqnarray}
where $\Gamma$ is the Gamma function and
$
x_-=|x|$ if $ x \le 0$ and $x_-=0$ if $ x > 0$.
For $\Re {\nu} > -1$,
the distribution $\chi_-^{\nu}$ is clearly well defined.
see \cite[p. 308]{GP} or  \cite{CHS, DOS}.
For $\Re {\nu} \le -1$, $\chi_-^{\nu}$ can extended by analytic continuation
(see, for example, \cite[Ch III, Section 3.2]{Ho1}). For compactly supported function $F$ such that $\supp F \subset [0,\infty)$,
 the Weyl fractional derivative of $F$ of order $\nu$ is given by the formula
\begin{equation}\label{e9.13} F^{(\nu)}=F*{\chi}^{-\nu-1}_-, \ \ \ \nu\in{\mathbb C}.
\end{equation}
Since $F= F^{(\nu)}*{\chi}^{\nu-1}_-$,  we may write $F(H)=\frac{1}{\Gamma(\nu)}\int_0^\infty F^{(\nu)}(t)
(t-{H})_+^{\nu-1}dt$ if $F$ has compact support in $[0,\infty)$. Thus, it follows that,  for every $\nu\geq 0,$
\Be \label{e9.14}
F(H)
= \frac{1}{\Gamma(\nu)}\int_0^\infty F^{(\nu)}(R)\, R^{\nu-1}\, S_{\sqrt{R}}^{\nu-1}(H)dR
\Ee
for all   $F$ compactly supported in $[0, \infty)$.

Now  we are ready to prove Proposition~\ref{th6.1}.
\hspace{-4cm}
\begin{proof}[Proof of Proposition ~\ref{th6.1}]
 Let us set $\widetilde w(x)=w(|x|)$.
Using  \eqref{e9.14}, we  see that  $ F(H) f(x)$ is qual to
  \[ \frac{1}{\Gamma(\lambda+1)}
  \int_0^\infty F^{(\lambda+1)}(R) R^{\lambda}S_{\sqrt{R}}^{\lambda}(H) f(x) dR.
\]
Since $L^{2,\infty}$ is normable, by Minkowski's inequality  and  the assumption \eqref{ne7.1}  we have
 \begin{eqnarray*}
  \|F(H) f\|_{L^{2,\infty}(\widetilde wdx, \RN)}
 &\leq&   C\sup_{R>0}\|S_R^{\lambda}(H)f\|_{L^{2,\infty}(\widetilde wdx, \, \RN) }
                  \int_0^\infty  |F^{(\lambda+1)}(s) |s^{\lambda}ds
                  \\
 &\leq&   C \,  \|f\|_{L^p(\RN) }  \int_0^\infty  |F^{(\lambda+1)}(s) |s^{\lambda}ds
\end{eqnarray*}
for $F$ compactly supported in $[0,\infty)$.
Let $\eta$ be a non-negative smooth function  such that $\eta(0)=1$ and  $\supp
\eta \subset [-1,1]$.  Taking $F=\eta(\cdot-2k)$ in the above estimate, we get
\Be  \label{ne7.4}
\|\chi_{[2k,2k+1)}(H)f\|_{L^{2,\infty}(\widetilde wdx, \, \RN) }\leq  Ck^\lambda \|f\|_{L^p(\RN) }
\Ee
because  $
\int_0^\infty  |\eta^{(\lambda+1)}(s-k) |s^{\lambda}ds \sim k^{\lambda},
$
and  $\eta(H-2k)f=\chi_{[2k,2k+1)}(H)f=P_{2k}f.$

Now, let us set \[ f_k(r):= \operatorname{sign} (\mathfrak L_k^n (r)) | \mathfrak L_k^n (r)|^{1/(p-1)}.\]
Then, from Lemma~\ref{len6.1} and Lemma~\ref{len7.1}, it follows that
we obtain
 \begin{eqnarray*}
 \|\chi_{[2k,2k+1)}(H)f_k(|\cdot|)\|_{L^{2,\infty}(\widetilde wdx, \, \RN)}
 &=&R_k^{n/2-1}(f_k)\|\mathfrak L_k^{n}(|\cdot|)\|_{L^{2,\infty}(\widetilde wdx, \, \RN)}
 \\
 &\geq&CD_k^{-1} k^{-1/4}\int_0^\infty f_k(r)\mathfrak L_k^n(r) r^{n-1}dr.
\end{eqnarray*}
By our choice of $f_k$ it is clear that $\int_0^\infty f_k(r)\mathfrak L_k^n(r) r^{n-1}dr
=\|\mathfrak L_k^n\|^{p'}_{L^{p'}([0, \infty), \, r^{n-1}dr)} $ $=\|f_k\|_{L^p([0, \infty), \, r^{n-1}dr)}
\|\mathfrak L_k^n\|_{L^{p'}([0, \infty), \, r^{n-1}dr)}$. Thus, using Lemma \ref{len7.3}, we get
 \[
 \|\chi_{[2k,2k+1)}(H)f_k(|\cdot|)\|_{L^{2,\infty}(\widetilde wdx, \, \RN)}
 \ge Ck^{n(1/2-1/p)/2-1/4} \|f_k(|\cdot|)\|_{L^p(\RN)} .
\]
Then we  combine this with
\eqref{ne7.4} where we take  $f=f_k(|\cdot|)$
to obtain
 \begin{eqnarray} \label{ne7.44}
k^{n(1/2-1/p)/2-1/4} \|f_k(|\cdot|)\|_{L^p(\RN)}
 \leq  Ck^{\lambda} \,  \|f_k(|\cdot|)\|_{L^p(\RN)}
\end{eqnarray}
with $C$ independent of $k$.
Obviously
$
0<\|f_k\|^p_{L^p(\RN)}<\infty.
$
Thus,  \eqref{ne7.44} implies $k^{n(1/2-1/p)/2-1/4}
 \leq  Ck^{\lambda}$ with $C$ independent of $k$.
 Letting  $k$ tend to infinity, we get
$
\lambda\geq n(1/2-1/p)/2-1/4
$
as desired. \end{proof}

\subsection{Proof of Proposition ~\ref{nec-2}}

The proof of Proposition ~\ref{nec-2} is based on the following  weighted estimates of the normalized Hermite functions.

\begin{lemma}\label{le7.1} Let $\alpha\geq 0$.  Then, if  $k\in{\mathbb N}$ is large enough, we have
\begin{align}\label{e7.0}
\int_{-\infty}^\infty h^2_{k}(x)(1+&|x|)^\alpha dx  \geq  C k^{\alpha/2},
\\
\label{e7.00}
\int_{-\infty}^\infty h^2_{k}(x)(1+|x|)^{-\alpha} & dx \geq  C \max\{k^{-\alpha/2},k^{-1/2}\}.
\end{align}
\end{lemma}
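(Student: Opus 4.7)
\textbf{Plan for the proof of Lemma~\ref{le7.1}.} My approach is to exploit the classical asymptotic formula for the $L^2$-normalized Hermite functions in their oscillatory (classical) region. For any fixed $\delta\in(0,1)$ and all $|x|\le (1-\delta)\sqrt{2k+1}$, one has
\[
h_k(x)=\sqrt{\tfrac{2}{\pi}}\,(2k+1-x^2)^{-1/4}\bigl[\cos\psi_k(x)+O(k^{-1})\bigr],
\]
where $\psi_k'(x)=\sqrt{2k+1-x^2}$. This is a standard fact (see Szeg\H{o}), and can alternatively be derived from the Laguerre asymptotic~\eqref{asymp} cited in the paper via the Hermite--Laguerre identity. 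Writing $\cos^2\psi_k=\tfrac12+\tfrac12\cos(2\psi_k)$ and integrating by parts against $\cos(2\psi_k)$ (using $\psi_k'(x)\ge c\sqrt{k}$ on the regions of interest), we obtain
\[
\int_I h_k^2(x)\,dx=\frac{1}{\pi}\int_I (2k+1-x^2)^{-1/2}\,dx+R_k(I),
\]
for any interval $I$ within the oscillatory region, with $|R_k(I)|\lesssim k^{-1}$ (both the oscillatory piece and the $O(k^{-1})$ error from the asymptotic are controlled on compact $I$, and $|R_k(I)|\lesssim k^{-1}|I|$ on growing $I$).

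To prove \eqref{e7.0}, I would take the annular region $A_k=\{x:\sqrt{k}\le |x|\le \sqrt{2k}\}$. There $(1+|x|)^\alpha\gtrsim k^{\alpha/2}$, and the main term $\frac{1}{\pi}\int_{A_k}(2k+1-x^2)^{-1/2}dx$ is bounded below by a positive constant uniformly in $k$ (since the integrand is $\sim k^{-1/2}$ and $|A_k|\sim \sqrt{k}$), while the remainder $R_k(A_k)=O(k^{-1/2})$ is negligible. Hence $\int_{A_k} h_k^2\,dx\ge c$ for some $c>0$ independent of $k$, and consequently
\[
\int_{-\infty}^\infty h_k^2(x)(1+|x|)^\alpha dx \ge \int_{A_k}h_k^2(x)(1+|x|)^\alpha dx\gtrsim k^{\alpha/2}.
\]

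For \eqref{e7.00}, the bound $\gtrsim k^{-\alpha/2}$ is obtained from exactly the same annulus $A_k$, since $(1+|x|)^{-\alpha}\sim k^{-\alpha/2}$ there and $\int_{A_k}h_k^2\gtrsim 1$. The bound $\gtrsim k^{-1/2}$ is obtained by restricting to the fixed interval $[-1,1]$, on which $(1+|x|)^{-\alpha}\ge 2^{-\alpha}$; the asymptotic formula yields
\[
\int_{-1}^{1} h_k^2(x)\,dx=\frac{1}{\pi}\int_{-1}^{1}(2k+1-x^2)^{-1/2}dx+O(k^{-1})\sim k^{-1/2},
\]
so $\int h_k^2(1+|x|)^{-\alpha}dx\gtrsim k^{-1/2}$. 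Taking the larger of the two lower bounds completes the proof.

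The main technical issue will be making the oscillatory estimate $\int_I (2k+1-x^2)^{-1/2}\cos(2\psi_k(x))dx=O(k^{-1})$ rigorous. Substituting $d\psi_k=\sqrt{2k+1-x^2}\,dx$ recasts this as $\int \psi_k'^{-2}\,d(\sin 2\psi_k)/2$, whose boundary contributions are $O(k^{-1})$ and whose integrated term has integrand of order $k^{-3/2}$ on $|x|\lesssim\sqrt{k}$; both are of lower order than the main term $\sim k^{-1/2}$. This is the only subtle point; everything else is routine bookkeeping.
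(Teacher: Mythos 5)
Your approach is correct in spirit but takes a genuinely different route from the paper's proof. The paper uses the same asymptotic (its \eqref{eee51}, equivalent to what you cite) but argues \emph{pointwise}: it introduces the set $E(N)\subset[\sqrt N/2,\sqrt N/\sqrt2]$ where $|\cos(\cdot)|\ge\sqrt2/2$, shows by a direct counting of cosine periods that $|E(N)|\gtrsim\sqrt N$, and then uses the resulting pointwise lower bound $h_k^2(x)\gtrsim N^{-1/2}$ on $E(N)$ (and an analogous set inside $[0,1]$ for the $k^{-1/2}$ bound). You instead compute $\int_I h_k^2$ by writing $\cos^2\psi_k=\tfrac12+\tfrac12\cos(2\psi_k)$ and controlling the oscillatory piece by a nonstationary-phase integration by parts, then insert the trivial bounds for $(1+|x|)^{\pm\alpha}$ on the chosen interval. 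Both strategies work; the paper's avoids any oscillatory-integral estimate at the cost of the somewhat fiddly measure bound on $E(N)$, while yours gives the cleaner semiclassical statement $\int_I h_k^2\sim\frac1\pi\int_I(2k+1-x^2)^{-1/2}dx$ at the cost of the integration-by-parts lemma. Your integration-by-parts bookkeeping is correct: the boundary term is $O(\psi_k'^{-2})=O(k^{-1})$ and the integrated term is $O(k^{-1})$ on intervals contained well inside the bulk, both dominated by the main term of size $\gtrsim k^{-1/2}$.

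One concrete slip: you state the asymptotic on $|x|\le(1-\delta)\sqrt{2k+1}$ for a \emph{fixed} $\delta$, but then choose $A_k=\{\sqrt k\le|x|\le\sqrt{2k}\}$. The right endpoint $\sqrt{2k}$ is within $O(k^{-1/2})$ of the turning point $\sqrt{2k+1}$, so $\sqrt{2k}>(1-\delta)\sqrt{2k+1}$ for every fixed $\delta>0$ once $k$ is large; near that endpoint $N-x^2=O(1)$, the error term $O(N^{1/2}(N-x^2)^{-7/4})$ is not small, and your $\psi_k'\gtrsim\sqrt k$ estimate fails. This is easy to repair (and the lemma only needs \emph{some} annulus): replace $A_k$ by, say, $\{\tfrac12\sqrt{2k+1}\le|x|\le\tfrac34\sqrt{2k+1}\}$, which keeps $(1+|x|)^{\pm\alpha}\sim k^{\pm\alpha/2}$, keeps the main term $\gtrsim1$, and stays uniformly in the bulk. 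With that adjustment the argument goes through.
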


To prove the lower bounds \eqref{e7.0} and \eqref{e7.00}, we  make  use  of
the  following  asymptotic property  of the Hermite function (see \cite[1.5.1, p. 26]{Th3}):
\Be\label{eee51}
h_{k}(x)= \left(\tf{2}{\pi}\right)^{\frac{1}{2}} \left(N-x^2\right)^{-\frac{1}{4}} \cos\left( \frac{N(2\theta-\sin\theta )-\pi}{4} \right)
+O\left(N^{\frac{1}{2}}(N-x^2)^{-\frac{7}{4}}\right),
\Ee where   $N=2k+1$, $0\leq x\leq N^{ {1}/{2}}-N^{- {1}/{6}} $ and $\theta=\arccos(xN^{- {1}/{2}})$.

\begin{proof}
We begin with showing   that there exists a constant $C>0$ such that, for any large $N,$
\begin{eqnarray}\label{e7.000}
|E(N)|\geq C \sqrt N,
\end{eqnarray}
where
\[
 E(N):=  \left\{x \in \left[\frac{\sqrt N}{2},\frac{\sqrt N}{\sqrt 2}\right]:
\cos\left( \frac{N(2\theta-\sin\theta )-\pi}{4} \right)\geq {\sqrt 2\over 2}\right\}\,,
\]
and $  \theta=\arccos(xN^{- {1}/{2}})$.
For  \eqref{e7.000},  it is enough to show
\Be \label{e7.0001}
\left|\left\{t \in \left[\frac{1}{2},\frac{1}{\sqrt 2}\right]:\cos\left( \frac{N(2\tilde \theta-\sin\tilde\theta )-\pi}{4} \right)\geq
{\sqrt 2\over 2} \right\}\right|\geq C, \ \ \ \  \tilde\theta=\arccos(t)
\Ee
with $C$  independent of $N$,  which is equivalent to
\eqref{e7.000}  as is easy to see by change of variables.
In order to show \eqref{e7.0001}, we make change of variables  $y=2\tilde \theta-\sin\tilde\theta$.
The condition
$t \in [ {1}/{2}, {1}/{\sqrt 2}]$ implies that $\tilde\theta\in [ {\pi}/{4}, {\pi}/{3}]$
and $y\in [ {\pi}/{2}- {\sqrt 2}/{2}, {2\pi}/{3}- {\sqrt 3}/{2}]$.
We note that
$
-\frac{3\sqrt 2}{2}<\frac{d y}{dt} =-\frac{2-t}{\sqrt {1-t^2}}<-\frac{2-\sqrt 2}{2}
$
for $t \in [ {1}/{2}, {1}/{\sqrt 2}]$.
So,  \eqref{e7.0001} follows if we show  that there exists a constant $C>0$ independent on $N$ such that
\begin{eqnarray*}
\left|\left\{y \in \left[\frac{\pi}{2}-\frac{\sqrt 2}{2},\frac{2\pi}{3}-\frac{\sqrt 3}{2}\right]:\
 \ \cos\left( \frac{Ny-\pi}{4} \right)\geq {\sqrt 2\over 2}\right\}\right|\geq C,
\end{eqnarray*}
but this is clear from an elementary computation.

Once we have  \eqref{e7.000},  the desired estimate  \eqref{e7.0} follows because
$h_{k}(x)\geq C N^{-1/4}, x\in E$ by \eqref{eee51}. Clearly, we  also  have the following estimate
$\int_{-\infty}^\infty h^2_{k}(x)(1+|x|)^{-\alpha} dx
\geq Ck^{-\alpha/2}$. To complete the proof it remains  to show
\begin{eqnarray*}
\int_{-\infty}^\infty h^2_{k}(x)(1+|x|)^{-\alpha} dx
\geq Ck^{-1/2}.
\end{eqnarray*}
In the similar manner as before   it is easy to  show (see also \cite[Lemma 3.4]{BR}) that
\begin{eqnarray*}
 \left|\left\{x \in [0,1]:\cos\left( \frac{N(2\theta-\sin \theta)-\pi}{4} \right)\geq {\sqrt2\over 2}\right\}\right|\geq C
\end{eqnarray*}
with $C$  independent of $N$.
 Combining this with \eqref{eee51} we get
$\int_{0}^1 h^2_{k}(x) dx \geq C N^{-1/2} \geq Ck^{-1/2}$ and hence the desired estimate.
\end{proof}

\begin{lem}\label{le5.3} Let $\alpha\ge 0$. Then, for  all $f\in L^2$,  we have the estimate
\begin{eqnarray}\label{e7.13}
 k^{\alpha/4} \|\chi_{[k,k+1)}(H)f\|_2\le C\|\chi_{[k,k+1)}(H) f\|_{L^2(\RN, \, (1+|x|)^{\alpha}) }.
\end{eqnarray}
\end{lem}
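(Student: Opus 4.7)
Writing $g=\chi_{[k,k+1)}(H)f$, estimate~\eqref{e7.13} squared is equivalent to
\[
\int_{\RN}(1+|x|)^\alpha|g|^2\,dx\geq c_\alpha\, k^{\alpha/2}\,\|g\|_2^2.
\]
My plan is to show that any such $g$ carries a positive fraction of its $L^2$-mass, uniformly in $k$ and in $g$, on the shell $\{|x|\geq \sqrt k/2\}$; since $(1+|x|)^\alpha\gtrsim k^{\alpha/2}$ there, the bound follows immediately. One may assume that $k$ is large and that the range of $\chi_{[k,k+1)}(H)$ is nontrivial, so that $Hg=kg$; for bounded $k$ the lemma is a finite-dimensional check.

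First I would compute the second moment exactly. Expanding $g=\sum_{2|\mu|+n=k}c_\mu\Phi_\mu$ in the Hermite basis and using the product form $\Phi_\mu=\prod_j h_{\mu_j}$, the integral $\int x_i^2\Phi_\mu\Phi_\nu\,dx$ vanishes unless $\mu_j=\nu_j$ for every $j\neq i$; combined with $|\mu|=|\nu|=(k-n)/2$ this forces $\mu=\nu$, so the cross terms drop and
\[
\int_{\RN}|x|^2|g|^2\,dx=\sum_\mu|c_\mu|^2\bigl(|\mu|+\tfrac{n}{2}\bigr)=\tfrac{k}{2}\,\|g\|_2^2.
\]
Next I would bound the fourth moment via Lemma~\ref{prop2.1} with exponent $1$: $\|(1+|x|)^2 g\|_2\leq C\|(1+H)g\|_2=C(1+k)\|g\|_2$ gives
\[
\int_{\RN}|x|^4|g|^2\,dx\leq C\,k^2\,\|g\|_2^2.
\]

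Finally, a Paley--Zygmund argument will convert these two moment estimates into the desired concentration. Splitting $\int|x|^2|g|^2\,dx$ across $\{|x|<\sqrt k/2\}$ and its complement, the first piece is bounded by $(k/4)\|g\|_2^2$, so
\[
\tfrac{k}{4}\|g\|_2^2\leq \int_{|x|\geq\sqrt k/2}|x|^2|g|^2\,dx\leq \Bigl(\int|x|^4|g|^2\,dx\Bigr)^{1/2}\Bigl(\int_{|x|\geq\sqrt k/2}|g|^2\,dx\Bigr)^{1/2}
\]
by Cauchy--Schwarz, and inserting the fourth-moment bound yields $\int_{|x|\geq \sqrt k/2}|g|^2\,dx\geq c\,\|g\|_2^2$ with $c>0$ independent of $k$ and $g$. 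Multiplying this lower bound by $(\sqrt k/2)^\alpha$, the pointwise minimum of the weight on the shell, completes the proof of~\eqref{e7.13}. The main obstacle I anticipate is the exact second-moment identity -- this is where orthogonality in the Hermite basis under the constraint $|\mu|=|\nu|$ is essential; the fourth-moment step is a direct application of Lemma~\ref{prop2.1}, and the rest is a clean bootstrap.
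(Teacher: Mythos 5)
Your proof is correct, and it takes a genuinely different route from the paper. The paper's argument decomposes $f=\sum_i f_i$ so that the active multi-indices in $f_i$ have $\mu_i\ge|\mu|/n$, reduces the weight to the one-dimensional factor $(1+|x_i|)^\alpha$, diagonalizes using orthogonality of one-dimensional Hermite functions, and then invokes the asymptotic formula \eqref{eee51} (via Lemma~\ref{le7.1}) to get the pointwise lower bound $\int h_{\mu_i}^2(1+|t|)^\alpha\,dt\gtrsim \mu_i^{\alpha/2}$. Your argument avoids the asymptotics entirely. The exact second-moment identity $\int|x|^2|g|^2=\tfrac{k}{2}\|g\|_2^2$ follows from orthogonality plus $\int t^2 h_m^2\,dt=m+\tfrac12$ (equivalently, the virial/Fourier-symmetry identity $\|xh_m\|_2^2=\|\partial h_m\|_2^2=m+\tfrac12$); the fourth-moment bound follows from Lemma~\ref{prop2.1} with $\alpha=1$ since $(1+H)g=(1+k)g$ on the eigenspace; and the Paley--Zygmund step then forces a uniform fraction of mass onto $\{|x|\ge\sqrt{k}/2\}$, on which $(1+|x|)^\alpha\gtrsim k^{\alpha/2}$. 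What the paper's approach buys is more precise information about \emph{where} the Hermite functions live (the $\sim\sqrt{k}$-length turning-point shell), which is used again in Proposition~\ref{nec-2}; what your approach buys is elementariness and robustness — it uses only the commutator algebra and the weighted Sobolev bound already established in Lemma~\ref{prop2.1}, with no reliance on the classical Plancherel--Rotach-type asymptotics. The one thing worth noting is that your proof leans on Lemma~\ref{prop2.1}, which the paper's proof of Lemma~\ref{le5.3} does not; conversely the paper's proof leans on the asymptotic \eqref{eee51}, which yours does not — so the dependency graphs are disjoint, not just rearranged.
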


\begin{proof}

As in the proof of  Lemma \ref{le3.1}, we decompose $f=\sum_{j=1}^n f_j$ such that   $f_1,\dots, f_n$ are orthogonal to
each other and, for $1\le i\le n$,   $\mu_i\geq |\mu|/n$ whenever
$\langle  f_i, \Phi_\mu \rangle\neq 0$ and, additionally,  the supports
of the maps $\mu\to \langle  f_i, \Phi_\mu \rangle$ are mutually disjoint.
So,  we  have
\[ \|\chi_{[k,k+1)}(H)f\|_2^2= \sum\limits_{j=1}^n  \|\chi_{[k,k+1)}(H)f_j\|_2^2,\]
and  there is a $j\in \{1,\cdots, n\} $
such that $  \|\chi_{[k,k+1)}(H)f_j\|_2^2\ge n^{-1}  \|\chi_{[k,k+1)}(H)f\|_2^2$.
Thus, it is sufficient for \eqref{e7.13} to show that
\begin{equation}
\label{e5.weight}
k^{\alpha/2} \|\chi_{[k,k+1)}(H)f_j\|_2^2\le C\|\chi_{[k,k+1)}(H) f\|^2_{L^2(\RN, \, (1+|x|)^{\alpha}) }.
\end{equation}
Without loss of generality, we may assume   $j=1$.

We proceed to show \eqref{e5.weight} for $j=1$.
 Since $(1+|x|)^{\alpha}\ge (1+|x_j|)^{\alpha}$, we have
\[
\|\chi_{[k,k+1)}(H)f\|^2_{L^2(\RN, \, (1+|x|)^{\alpha}) }
\! \ge \!  \int_{\RN}\sum_{2|\mu|+n=k}\sum_{2|\nu|+n=k}  \!\!\!\!\!c(\mu) \overline{c(\nu)} \Phi_\mu(x){\Phi_\nu(x)}  (1+|x_1|)^\alpha  dx,
\]
where $
 c(\mu)=\langle  f, \Phi_\mu \rangle$. Clearly the right hand side of the above is equal to
\[
 \sum_{2|\mu|+n=k}\sum_{2|\nu|+n=k} c(\mu) \overline{c(\nu)}\int_{-\infty}^{\infty}h_{\mu_1}(x_1)\overline{h_{\nu_1}(x_1)} (1+|x_1|)^\alpha dx_1
\prod_{i=2}^n \langle h_{\mu_i},  h_{\nu_i}  \rangle.
\]
By orthonormality of the Hermite functions and the relation $2|\mu|+n=k=2|\nu|+n$  this is again identical to
\[
\sum_{2|\mu|+n=k} |c(\mu)|^2 \int_{-\infty}^\infty h^2_{\mu_1}(x_1)(1+|x_1|)^\alpha dx_1 .
\]
Therefore, using  \eqref{e7.0} in Lemma~\ref{le7.1},  we have
\begin{eqnarray*}\label{e7.10}
\|\chi_{[k,k+1)}(H)f\|^2_{L^2(\RN, \, (1+|x|)^{\alpha}) }
\geq \sum_{2|\mu|+n=k} |c(\mu)|^2 \mu_1^{\alpha/2}.
\end{eqnarray*}
 This yields the desired estimate  \eqref{e5.weight}   for $j=1$ because  $\mu_1\geq |\mu|/n$ whenever $\langle  f_i, \Phi_\mu \rangle\neq 0$. Indeed,
it follows from the construction of $f_i$ and $c(\mu,i)$ that
   $$
  \sum_{2|\mu|+n=k} |c(\mu)|^2 \mu_1^{\alpha/2}\geq \sum_{2|\mu|+n=k} |\langle  f_1, \Phi_\mu \rangle|^2 \mu_1^{\alpha/2}\sim
 k^{\alpha/2}  \|\chi_{[k,k+1)}(H)f_1\|_2^2.
 $$
 Therefore, we get \eqref{e5.weight}.
\end{proof}

\begin{proof}[Proof of Proposition~\ref{nec-2}]
Since we are assuming that \eqref{e7.1} holds,  by duality we have the equivalent estimate
\[\sup_{R>0}\left\|S_R^{\lambda}(H)\right\|_{L^2(\RN, \, (1+|x|)^{\alpha}) \to L^2(\RN, \, (1+|x|)^{\alpha})}  < \infty.\]
We combine this and \eqref{e9.14} with $\nu=\lambda+1$ to obtain
\begin{align*}
\|F(H)f\|_{L^2(\RN, \, (1+|x|)^{\alpha}) }
&\leq  C\sup_{R>0}\|S_R^{\lambda}(H)f\|_{L^2(\RN, \, (1+|x|)^{\alpha}) }
          \int_0^\infty  |F^{(\lambda+1)}(s) |s^{\lambda}ds
           \\
 &\leq  C
        \|f\|_{L^2(\RN, \, (1+|x|)^{\alpha}) }  \int_0^\infty  |F^{(\lambda+1)}(s) |s^{\lambda}ds
 \end{align*}
for $F$ compactly supported  in  $\supp F \subset [0,\infty)$.  Similarly as before,  we take $F(t)=\eta(t-k)$ in the above where
$\eta$ is a non-negative smooth function with $\eta(0)=1$ and  $\supp
\eta \subset [-1,1]$. Then, since  $\int_0^\infty  |\eta^{(\lambda+1)}(s-k) |s^{\lambda}ds \sim k^{\lambda}$ and  $\eta(H-k)=\chi_{[k,k+1)}(H),$ it follows that
\begin{eqnarray} \label{e7.4}
\|\chi_{[k,k+1)}(H)f\|_{L^2(\RN, \, (1+|x|)^{\alpha}) }\leq  Ck^\lambda \|f\|_{L^2(\RN, \, (1+|x|)^{\alpha}) }.
\end{eqnarray}

We now consider specific functions $g_k$, $G_k$ which are given   by
\[ g_k(x)=h_{k}(x_1)h_0(x_2)\cdots h_0(x_n), \quad G_k(x)=g_k(x)(1+|x|)^{-\alpha},\]
and claim that
\Be \label{e7.44}
\left\|\chi_{[k,k+1)}(H)G_k\right\|_{L^2(\RN, \, (1+|x|)^{\alpha}) }
 \leq  Ck^\lambda \min\big\{k^{\alpha/4},k^{1/4}\big\} \,  \|\chi_{[k,k+1)}(H)G_k\|_2
\Ee
with $C$ independent of $k$.
Indeed, since $\|g_k\|_2=1$, we have  $\|\chi_{[k,k+1)}(H)G_k\|_2\ge    \langle\chi_{[k,k+1)}(H) G_k,  g_k \rangle =\langle G_k, \chi_{[k,k+1)}(H)g_k \rangle$.
Thus, noting that  $ \chi_{[k,k+1)}(H)g_k=g_k$ from our choice of $g_k$ and $g_k(x)(1+|x|)^{-\alpha/2}=G_k(x)(1+|x|)^{\alpha/2}$,  we get
\[
\|\chi_{[k,k+1)}(H)G_k\|_2 \ge \langle G_k, g_k \rangle
= \|(1+|x|)^{\alpha/2}G_k\|_2\|(1+|x|)^{-\alpha/2}g_k\|_2.
\]
Since $\|g_k\|^2_{L^2(\RN, \, (1+|x|)^{-\alpha})}\ge \int_{-\infty}^\infty |h_{\tilde k}(x_1)|^2(1+|x_1|)^{-\alpha}dx_1
\big(\int_0^1 |h_{0}(t)|^2dt\big)^{n-1}$, by  the estimate \eqref{e7.00} it follows that
\[
\|g_k\|^2_{L^2(\RN, \, (1+|x|)^{-\alpha})}
\geq C \max\{k^{-\alpha/2},k^{-1/2}\}.
\]
 Combining this with the above inequality yields
\[\|\chi_{[k,k+1)}(H)G_k\|_2
\geq C  \max\{k^{-\alpha/4},k^{-1/4}\}  \|G_k\|_{L^2(\RN, \, (1+|x|)^{\alpha}) }.\]
We also have  $ k^\lambda \|G_k\|_{L^2(\RN, \, (1+|x|)^{\alpha}) }
\geq  C\|\chi_{[k,k+1)}(H) G_k\|_{L^2(\RN, \, (1+|x|)^{\alpha}) } $ using the estimate  \eqref{e7.4}.
Thus  we have the estimate \eqref{e7.44}.

We apply the estimate  \eqref{e7.13}  to the function $G_k$ and combine the consequent estimate with   \eqref{e7.44}  to get
\begin{eqnarray}
\label{e7.14-1}
 k^{\alpha/4} \|\chi_{[k,k+1)}(H) G_k\|_2
&\leq& Ck^\lambda \min\big\{k^{\alpha/4},k^{1/4}\big\} \, \|\chi_{[k,k+1)}(H) G_k\|_2
\end{eqnarray}
with $C$ independent of $k$. Since
$
\langle G_k, \Phi_{\mu_0}\rangle
=\int_{\RN} |h_{k}(x_1)h_0(x_2)\cdots h_0(x_n)|^2(1+|x|)^{-\alpha}dx\neq 0
$ for $\mu_0=(k,0,\ldots,0)$, it follows that $\|\chi_{[k,k+1)}(H) G_k\|_2\neq 0$. Thus, \eqref{e7.14-1} implies
$k^{\alpha/4}
\le  Ck^\lambda \min\big\{k^{\alpha/4},k^{1/4}\big\}$
with $C$ independent of $k$.
Letting  $k\to \infty$ gives $
\lambda\geq \max\big\{ {(\alpha-1)}/{4},0\big\}
$
as desired.\end{proof}

\medskip

\begin{rem} Proposition~\ref{th6.1} can be used to give another proof of Proposition \ref{nec-2} provided that $1<\alpha<n$.
Indeed,  to the contrary,  suppose  that   \eqref{e7.1}  holds  with some $1<\alpha<n$ and   $\lambda <  (\alpha-1)/4$.
Now, for given $1<\alpha<n$ and   $\lambda <  (\alpha-1)/4$,
we can choose  a $p$ such that  $p>2n/(n-1)$, $\alpha> n(1-2/p)$, and $(\alpha-1)/4 > (n(1-2/p) -1)/4 >\lambda$.
By our choice of $p$ and H\"older's inequality  we have that  $f\in L^2({\mathbb R^n}, \, (1+|x|)^{-\alpha})$  if $f\in L^p$.
From this,  \eqref{e7.1}  implies that \eqref{ne7.1} holds  for all $f\in L^p$ and  $w(x)=(1+|x|)^{-\alpha}$.
Applying Proposition~\ref{th6.1}, we get   $
\lambda \ge n(1/2-1/p)/2-1/4,
$
  which is a contradiction.
   \end{rem}

 \noindent
{\bf Acknowledgments.}    P. Chen was supported by NNSF of China 11501583.
 X.T. Duong was supported by  the Australian Research Council (ARC) through the research
grant DP190100970.
D. He was supported by   NNSF of China (No. 11701583).
S. Lee  was supported by NRF (Republic of Korea) grant No. NRF2018R1A2B2006298.
 L. Yan was supported by the NNSF of China, Grant
No. 11521101 and 11871480, and by the Australian Research Council (ARC) through the research
grant DP190100970.
 P. Chen and L. Yan  would like to  thank Xianghong Chen, Ji Li and Adam Sikora for helpful  discussions.

\end{document}